\documentclass[a4paper,11pt]{amsart}
\usepackage{amsfonts,amssymb,amsmath,amsthm,abstract}
\usepackage[ps,all,arc,rotate]{xy}
\usepackage[lmargin=1in,rmargin=1in,tmargin=1in,bmargin=1in]{geometry}
\usepackage{fancyhdr}

\newtheorem{prop}{Proposition}[section]
\newtheorem{lemma}[prop]{Lemma}
\newtheorem{thm}[prop]{Theorem}
\newtheorem{cor}[prop]{Corollary}

\theoremstyle{definition}
\newtheorem{defn}[prop]{Definition}

\newtheorem{rmk}[prop]{Remark}

\newtheorem{ass}[prop]{Assumption}

\mathchardef\mhyphen="2D
\DeclareMathOperator{\rk}{rk}
\DeclareMathOperator{\im}{Im}

\newcommand{\ra}{\rightarrow} 

\def\cA{\mathcal A}\def\cB{\mathcal B}
\def\cE{\mathcal E}\def\cF{\mathcal F}\def\cG{\mathcal G}\def\cH{\mathcal H}
\def\cI{\mathcal I}\def\cL{\mathcal L}
\def\cO{\mathcal O}
\def\cR{\mathcal R}
\def\cU{\mathcal U}

\def\CC{\mathbb C}

\def\PP{\mathbb P}
\def\QQ{\mathbb Q}

\def\ZZ{\mathbb Z}

\def\fT{\mathfrak{T}} \def\fD{\mathfrak{D}} \def\fB{\mathfrak{B}}
\def\cxE{\mathcal{E}^\cdot} \def\cxF{\mathcal{F}^\cdot} \def\cxG{\mathcal{G}^\cdot}
\def\uP{\underline{P}} \def\us{\underline{\sigma}} \def\uc{\underline{\chi}} \def\ue{\underline{\eta}}
\def\GL{\mathrm{GL}} \def\SL{\mathrm{SL}} \def\d{\cdots}
\def\Sb{\mathrm{Stab} \: \beta}

\title{Stratifications of parameter spaces for complexes by cohomology types}

\author{Victoria Hoskins}

\begin{document}

\maketitle

\renewcommand{\abstractnamefont}{\scshape}

\begin{abstract}
We study a collection of stability conditions (in the sense of Schmitt) for complexes of sheaves over a smooth complex projective variety indexed by a positive rational parameter. We show that the Harder--Narasimhan filtration of a complex for small values of this parameter encodes the Harder--Narasimhan filtrations of the cohomology sheaves of this complex. Finally we relate a stratification into locally closed subschemes of a parameter space for complexes associated to these stability parameters with the stratification by Harder--Narasimhan types.
\end{abstract}

\section{Introduction}

Let $X$ be a smooth complex projective variety and $\cO_X(1)$ be an ample invertible sheaf on $X$. We consider the moduli of (isomorphism classes of) complexes of sheaves on $X$, or equivalently moduli of $Q$-sheaves over $X$ where $Q$ is the quiver
\[ \bullet \ra \bullet \ra \cdots \cdots \ra \bullet \ra \bullet \]
with relations imposed to ensure the boundary maps square to zero. Moduli of quiver sheaves have been studied in \cite{ac,acgp,gothen_king,schmitt05}. There is a construction of moduli spaces of S-equivalence classes of \lq semistable' complexes due to Schmitt \cite{schmitt05} as a geometric invariant theory quotient of a reductive group $G$ acting on a parameter space $\fT$ for complexes with fixed invariants.  The notion of semistability is determined by a choice of stability parameters and 
the motivation comes from physics; it is closely related to a notion of semistability coming from a Hitchin--Kobayashi correspondence for quiver bundles due to \'{A}lvarez-C\'onsul and Garc\'ia-Prada \cite{acgp}. The stability parameters are also used to determine a linearisation of the action. The notion of S-equivalence is weaker than isomorphism and arises from the GIT construction of these moduli spaces which results in some orbits being collapsed. 

As the notion of stability depends on a choice of parameters, we can ask if certain parameters reveal information about the cohomology sheaves of a complex. We show that there is a collection of stability parameters which can be used to study the cohomology sheaves of a complex. Analogously to the case of sheaves, every unstable complex has a unique maximally destabilising filtration known as its Harder--Narasimhan filtration. In this paper we give a collection of stability parameters indexed by a rational parameter $\epsilon >0$ and show for all sufficiently small values of $\epsilon$ the Harder--Narasimhan filtration of a given complex with respect to these parameters encodes the Harder--Narasimhan filtrations of the cohomology sheaves in this complex. We then go on to study a stratification of the parameter space $\fT$ associated to these stability parameters.

Given an action of a reductive group $G$ on a projective scheme $B$ with respect to an ample linearisation $\cL$, there is an associated stratification $\{ S_\beta : \beta \in \cB \}$ of $B$ into $G$-invariant locally closed subschemes for which the open stratum is the geometric invariant theory (GIT) semistable set $B^{ss}$ \cite{hesselink,kempf_ness,kirwan}. When $B$ is a smooth variety, this stratification comes from a Morse type stratification associated to the norm square of the moment map for this action. This stratification has a completely algebraic description which can be extended to the above situation of a linearised action on a projective scheme (cf. \cite{hoskinskirwan} $\S$4). We give a brief summary of the algebraic description given in \cite{kirwan} as this will be used later on.

If we choose a compact maximal torus $T$ of $G$ and positive Weyl chamber $\mathfrak{t}_+$ in the Lie algebra $\mathfrak{t}$ of $T$, then the index set $\cB$ can be identified with a finite set of rational weights in $\mathfrak{t}_+$ as follows. By fixing an invariant inner product on the Lie algebra $\mathfrak{K}$ of the maximal compact subgroup $K$ of $G$, we can identify characters and cocharacters as well as weights and coweights. There are a finite number of weights for the action of $T$ on $B$ and the index set $\mathcal{B}$ can be identified with the set of rational weights in $\mathfrak{t}_+$ which are the closest points to $0$ of the convex hull of a subset of these weights.

We say a map $\lambda : \CC^* \ra G$ (which is not necessarily a group homomorphism) is a rational one-parameter subgroup if $\lambda( \CC^*)$ is a subgroup of $G$ and there is a integer $N$ such that $\lambda^N$ is a one-parameter subgroup (1-PS) of $G$. Associated to $\beta $ there is a parabolic subgroup $P_\beta \subset G$, a rational 1-PS $\lambda_\beta : \CC^* \ra T_{\CC}$ and a rational character $\chi_\beta : T_{\CC} \ra \CC^*$ which extends to a character of $P_\beta$. 
Let $Z_\beta$ be the components of the fixed point locus of $\lambda_\beta$ acting on $B$ on which $\lambda_\beta$ acts with weight $|| \beta ||^2$ and $Z_\beta^{ss}$ be the GIT semistable subscheme for the action of the reductive part $\Sb$ of $P_\beta$ on $Z_\beta$ with respect to the linearisation $\mathcal{L}^{\chi_{-\beta}}$ (which is the original linearisation $\cL$ twisted by the character $\chi_{-\beta} : \Sb \ra \CC^*$). Then $Y_\beta$ (resp. $Y_\beta^{ss}$) is defined to be the subscheme of $B$ consisting of points whose limit under the action of $\lambda_\beta(t)$ as $t \to 0$ lies in $Z_\beta$ (resp. $Z_\beta^{ss}$). There is a retraction $p_\beta : Y_\beta \ra Z_\beta$ given by taking a point to its limit under $\lambda_\beta$. 
By \cite{kirwan}, for $\beta \neq 0 $ we have
\[ S_\beta = G Y_\beta^{ss} \cong G \times^{P_\beta} Y_\beta^{ss}. \]
The definition of $S_\beta$ makes sense for any rational weight $\beta$, although $S_\beta$ is nonempty if and only if $\beta$ is an index.

This stratification has a description in terms of Kempf's notion of adapted 1-PSs due to Hesselink \cite{hesselink}. Recall that the Hilbert--Mumford criterion states a point $b \in B$ is semistable if and only if it is semistable for every 1-PS $\lambda$ of $G$; that is, $\mu^{\cL}(b, \lambda) \geq 0$ where $\mu^{\cL}(b,\lambda) $ is equal to minus the weight of the $\CC^*$-action induced by $\lambda$ on the fibre of $\cL$ over $\lim_{t \to 0} \lambda(t) \cdot b$. In \cite{kempf} Kempf defines a non-divisible 1-PS to be adapted to an unstable point $b \in B - B^{ss}$ if it minimises the normalised Hilbert--Mumford function:
\[ \mu^{\cL}(b, \lambda) = \min_{\lambda'} \frac{\mu^{\cL}(b, \lambda')}{|| \lambda'||}. \]
Hesselink used these adapted 1-PSs to stratify the unstable locus and this stratification agrees with the stratification described above. In fact if $\beta$ is an nonzero index, then the associated 1-PS $\lambda_\beta$ is a 1-PS which is adapted to every point in $Y_\beta^{ss}$.

In this paper we study the stratification obtained in this way from a suitable action of a group $G$ on a parameter space for complexes using the above collection of stability parameters (for very small $\epsilon$) which are related to cohomology. We show that for a given Harder--Narasimhan type $\tau$, the set up of the parameter scheme can be chosen so all sheaves with Harder--Narasimhan type $\tau$ are parametrised by a locally closed subscheme $R_\tau$ of the parameter space $\fT$. Moreover, $R_\tau$ is a union of connected components of a stratum $S_{\beta(\tau)}$ in the associated stratification. The scheme $R_\tau$ has the nice property that it parametrises complexes whose cohomology sheaves are of a fixed Harder--Narasimhan type. 


The layout of this paper is as follows. In $\S$\ref{schmitt construction} we give a summary of the construction of Schmitt of moduli spaces of complexes and study the action of 1-PSs of $G$. In $\S$\ref{sec on stab} we give the collection of stability conditions indexed by $\epsilon >0$ and show that the Harder--Narasimhan filtration of a complex (for very small $\epsilon$) encodes the Harder--Narasimhan filtration of the cohomology sheaves. Then in $\S$\ref{sec on strat} we study the associated GIT stratification of the parameter space for complexes and relate this to the stratification by Harder--Narasimhan types. Finally, in $\S$\ref{sec on quot} we consider the problem of taking a quotient of the $G$-action on a Harder--Narasimhan stratum $R_\tau$.

\subsection*{Notation and conventions}
Throughout we let $X$ be a smooth complex projective variety and $\cO_X(1)$ be an ample invertible sheaf on $X$. All Hilbert polynomials of sheaves over $X$ will be calculated with respect to $\cO_X(1)$. 
We use the term complex to mean a bounded cochain complex of torsion free sheaves. We say a complex $\cxE$ is concentrated in $[m_1,m_2]$ if $\cE^i = 0$ for $i < m_1 $ and $i>m_2$.

\subsection*{Acknowledgements}
I am very grateful to my thesis supervisor Frances Kirwan for her support and guidance over the last few years.

\section{Schmitt's construction}\label{schmitt construction}

In this section we give a summary of the construction due to Schmitt \cite{schmitt05} of moduli space of S-equivalence classes of semistable complexes over $X$. We also make some important calculations about the weights of $\CC^*$-actions (some details of which can also be found in \cite{schmitt05} Section 2.1). 

If we have an isomorphism between two complexes $\cxE$ and $\cxF$, then for each $i$ we have an isomorphism between the sheaves $\cE^i$ and $\cF^i$ and thus an equality of Hilbert polynomials $P(\cE^i) = P(\cF^i)$. Therefore we can fix a collection of Hilbert polynomials $P = (P^i)_{i \in \ZZ}$ such that $P^i=0$ for all but finitely many $i$ and study complexes with these invariants. In fact we can assume $P$ is concentrated in $[m_1,m_2]$ and write $P = (P^{m_1},\d,P^{m_2})$.

\subsection{Semistability}\label{stab defn}

The moduli spaces of complexes only parametrise a certain collection of complexes with invariants $P$ and this collection is determined by a notion of (semi)stability. Schmitt introduces a notion of (semi)stability for complexes which depends on a collection of stability parameters $(\us, \uc)$ where $\uc := \delta \ue$ and
\begin{itemize}
\item $\us=(\sigma_i \in \ZZ_{>0})_{i \in\ZZ}$,
\item $\ue=(\eta_i \in \QQ)_{i \in\ZZ}$,
\item $\delta$ is a positive rational polynomial such that $\deg \delta =\max(\dim X-1,0)$.
\end{itemize}

\begin{defn} 
The reduced Hilbert polynomial of a complex $\cxF$ with respect to the parameters $(\us, \uc)$ is defined as
\[P_{\us, \uc}^{\mathrm{red}}(\cxF):= \frac{\sum_{i \in \ZZ} \sigma_i P(\cF^i) - \chi_i \rk{\cF^i}}{\sum_{i \in \ZZ} \sigma_i \rk \cF^i} \]
where $P(\cF^i)$ and $ \rk{\cF^i}$ are the Hilbert polynomial and rank of the sheaf $\cF^i$. 

We say a nonzero complex $\cxF$ is $(\us, \uc)$-semistable if for any nonzero proper subcomplex $\cxE \subset \cxF$ we have an inequality of polynomials
\[P_{\us,\uc}^{\mathrm{red}}(\cxE) \leq P_{\us, \uc}^{\mathrm{red}}(\cxF). \]
By an inequality of polynomials $R \leq Q$ we mean $R(x) \leq Q(x)$ for all $x >\!> 0$. We say the complex is $(\us, \uc)$-stable if this inequality is strict for all such subcomplexes.
\end{defn}

\begin{rmk}\label{normalise} Observe that for any rational number $C$ if we let $\ue' = \ue - C\us$ and $\uc' = \delta \ue'$, then the notions of $(\us, \uc)$-semistability and $(\us, \uc')$-semistability are equivalent. For invariants $P=(P^{m_1},\d,P^{m_2})$ and any stability parameters $(\us,\uc)$, we can let \[C = \frac{\sum_{i=m_1}^{m_2} \eta_i r^i}{\sum_{i=m_1}^{m_2} \sigma_i r^i}\] where $r^i$ is the rank determined by the leading coefficient of $P^i$ and consider the associated stability parameters $(\us,\uc')$ for $P$ which satisfy $\sum_{i=m_1}^{m_2} \eta_i' r^i = 0$. As we have fixed $P$ in this section, we may assume we have stability parameters which satisfy $\sum_{i=m_1}^{m_2} \eta_i r^i = 0$.
\end{rmk}

\subsection{The parameter space}\label{param sch const}

The set of sheaves occurring in a $(\us,\uc)$-semistable complex $\cxE$ with invariants $P$ is bounded by the usual arguments (see \cite{simpson}, Theorem 1.1) and so we may choose $n>\!>0$ so that all these sheaves are $n$-regular. 
Fix complex vector spaces $V^i$ of dimension $P^i(n)$ and let $Q^i$ be the open subscheme of the quot scheme $\mathrm{Quot}(V^i \otimes \cO_X(-n), P^i)$ consisting of torsion free quotient sheaves $q^i: V^i \otimes \cO_X(-n) \rightarrow \cE^i$ such that $H^0(q^i(n))$ is an isomorphism. The parameter scheme $\fT$  for $(\us,\uc)$-semistable complexes with invariants $P$ is constructed as a locally closed subscheme of a projective bundle $\fD$ over the product $Q:=Q^{m_1} \times \cdots \times Q^{m_2}$. 

Given a $(\us,\uc)$-semistable complex $\cxE$ with Hilbert polynomials $P$ we can use the evaluation maps
\[ H^0(\cE^i(n)) \otimes \cO_X(-n) \ra \cE^i \]
along with a choice of isomorphism $V^i \cong H^0(\mathcal{E}^i(n))$ to parametrise the $i$th sheaf $\mathcal{E}^i$ by a point $q^i : V^i  \otimes \cO_X(-n) \ra \cE^i $ in $Q^i$. From the boundary morphisms $d^i : \cE^i \ra \cE^{i+1}$ we can construct a homomorphism
\[ \psi:=H^0(d(n)) \circ (\oplus_i H^0(q^i(n))) : \oplus_i V^i  \ra  \oplus_i H^0(\cE^i(n)) \]
where $d : \oplus_i \cE^i \rightarrow \oplus_i\cE^i$ is the morphism determined by the boundary maps $d^i$.
Such homomorphisms $\psi$ correspond to points in the fibres of the sheaf
\[  \cR:=(\oplus_i V^i)^\vee \otimes p_* \left(\cU  \otimes (\pi_X^{Q \times X})^* \cO_X(n)\right)  \]
over $Q$ where $p : Q \times X \ra Q$ is the projection and $\oplus_iV^i \otimes (\pi_X^{Q \times X})^*\cO_X(-n) \ra \cU$ is the quotient sheaf over $Q \times X$ given by taking the direct sum of the pullbacks of the universal quotients $V^i \otimes (\pi_X^{Q^i \times X})^* \cO_X(-n) \rightarrow \cU^i$ on $Q^i \times X$ to $Q \times X$. Note that $\cR$ is locally free for $n$ sufficiently large and so we can consider the projective bundle $\fD :=\PP(\cR \oplus \cO_Q)$ over $Q$. A point of $\fD$ over $q=(q^i:V^i \otimes \cO_X(-n) \ra \cE^i)_i \in Q$ is given by a pair $(\psi: \oplus_i V^i  \ra  \oplus_i H^0(\cE^i(n)),\zeta \in \CC)$ defined up to scalar multiplication. The parameter scheme $\fT$ consists of points $(q,[\psi : \zeta])$ in $\fD$ such that:
\begin{enumerate}
\renewcommand{\labelenumi}{\roman{enumi})}
\item $\psi =H^0(d(n)) \circ (\oplus_i H^0(q^i(n)))$ where $d : \oplus_i \cE^i \ra \oplus_i\cE^i$ is given by morphisms $d^i : \cE^i \ra \cE^{i+1}$ which satisfy $d^i \circ d^{i-1} = 0$,
\item $ \zeta\neq 0$.
\end{enumerate}
The conditions given in i) are all closed (they are cut out by the vanishing locus of homomorphisms of locally free sheaves) and condition ii) is open; therefore $\fT $ is a locally closed subscheme of $\fD$. We let $\fD'$ denote the closed subscheme of $\fD$ given by points which satisfy condition i). We will write points of $\fT$ as $(q,d)$ where $q=(q^i:V^i \otimes \cO_X(-n) \ra \cE^i)_i \in Q$ and $d$ is given by $d^{i} : \cE^i \rightarrow \cE^{i+1}$ which satisfy $d^{i} \circ d^{i-1}=0$.

\begin{rmk} The construction of the parameter scheme $\fT$ depends on the choice of $n$ and the Hilbert polynomials $P$. We  write $\fT_{P}$ or $\fT(n)$ if we wish to emphasise its dependence on $P$ or $n$.
\end{rmk}

\subsection{The group action}\label{gp act}

For $m_1 \leq i \leq m_2$ we have fixed vector spaces $V^i$ of dimension ${P^i(n)}$. The reductive group $\Pi_i \GL(V^i)$ acts on both $Q$ and $\fD$: if $g = (g_{m_1}, \dots, g_{m_2}) \in \Pi_i \GL(V^i)$ and $z = ((q^i : V^i \otimes \cO_X(-n) \ra \cE^i)_i,[\psi : \zeta]) \in \fD$, then
\[ g \cdot z = ((g_i \cdot q^i : V^i \otimes \cO_X(-n) \ra \cE^i)_i, [g \cdot \psi : \zeta]) \]
where
\[\xymatrix@1{
g_i \cdot q^i : & V^i \otimes \cO_X(-n) \ar[r]^{g_i^{-1 }\cdot} & V^i \otimes \cO_X(-n) \ar[r]^>>>>>{q^i} & \cE^i }\]
and
\[\xymatrix@1{
g \cdot \psi : & \oplus_i V^i \ar[r]^{g^{-1 }\cdot} & \oplus_i V^i  \ar[r]^>>>>>{q^i} & \oplus_i H^0(\cE^i(n))}.\]
If instead we consider $\tilde{\psi}:= \oplus_i H^0(q^i(n))^{-1} \circ \psi : \oplus_iV^i \ra \oplus_iV^i$ then this action corresponds to conjugating $\tilde{\psi}$ by $g$; that is,
\[ g \circ \tilde{\psi} \circ g^{-1} = \widetilde{g \cdot \psi} .\]

This action preserves the parameter scheme $\fT$ and the orbits correspond to isomorphism classes of complexes. As the subgroup $\CC^* ( I_{V_{m_1}}, \dots ,I_{V_{m_2}})$ acts trivially  on $\fD$, we are really interested in the action of $(\Pi_i \GL(V^i))/ \CC^* $. Given integers $\us =(\sigma_{m_1} , \dots , \sigma_{m_2})$ we can define a character
\[ \begin{array}{cccc} \det_{\us} :  &\Pi_i \GL(V^i) & \ra & \CC^* \\ &(g_i) & \mapsto & \Pi_i \det g_i^{\sigma_i} \end{array}\]
and instead consider the action of the group $G=G_{\us}:= \ker \det_{\us}$ which maps with finite kernel onto $(\Pi_i \GL(V_i))/ \CC^* $. 

\subsection{The linearisation}\label{linearisation schmitt}

Schmitt uses the stability parameters $(\us,\uc):=(\us,\delta\ue)$ to determine a linearisation of the $G$-action on the parameter space $\fT$ in three steps. The first step is given by using the parameters $\us$ to construct a proper injective morphism from $\fD$ to another projective bundle $\fB_{\us}$ over $Q$. The parameters $\us$ are used to associate to each point $z = (q,[\psi : \zeta]) \in \fD$ a nonzero decoration 
\[ \varphi_{\us}(z) :  ( V_{\us}^{\otimes r_{\us}})^{\oplus 2} \otimes \cO_X(-r_{\us}n) \rightarrow \det \cE_{\us}  \]
(defined up to scalar multiplication) where  $r_{\us} = \sum_i \sigma_i r^i$ and $V_{\us}:= \oplus_i (V^{i})^{\oplus \sigma_i}$ and $\cE_{\us}:= \oplus_i (\cE^{i})^{\oplus \sigma_i}$. The fibre of $\fB_{\us}$ over $q \in Q$ parametrises such homomorphisms $\varphi_{\us}$ up to scalar multiplication and the morphism $\fD \ra \fB_{\us}$ is given by sending $z = (q,[\psi : \zeta]) \in \fD$ to $(q, [\varphi_{\us}(z)]) \in \fB_{\us}$. The group $G \cong\SL(V_{\us}) \cap \Pi_i \GL(V^i)$ acts on $\fB_{\us}$ by acting on $Q$ and $V_{\us}$ and $\fD \ra \fB_{\us}$ is equivariant with respect to this action.

The second step is given by constructing a projective embedding $\fB_{\us} \ra B_{\us}$. This embedding is essentially given by taking the projective embedding of each $Q^i$ used by Gieseker \cite{gieseker_sheaves}. Recall that Gieseker gave an embedding of $Q^i$ into a projective bundle $B_i$ over the components $R_i$ of the Picard scheme of $X$ which contain the determinant of a sheaf $\cE^i$ parametrised by $Q^i$. This embedding is given by sending a quotient sheaf $q^i : V^i \otimes \cO_X(-n) \ra \cE^i$ to a homomorphism $\wedge^{r^i} V^i \ra H^0(\det \cE^i(r^in))$ which represents a point in a projective bundle $B_i$ over $R_i$. The group $\SL(V^i)$ acts naturally on $B_i$ by acting on the vector space $\wedge^{r^i} V^i$ and the morphism $Q^i \ra B_i$ is equivariant with respect to this action. In a similar way Schmitt also constructs an equivariant morphism $\fB_{\us} \ra B'_{\us}$ where $B_{\us}'$ is a projective bundle over the product $\Pi_i R_i$. Let $B_{\us} =  B_{m_1} \times \cdots \times B_{m_2} \times B'_{\us} $; then the map $ \fB_{\us} \ra B_{\us}$ is equivariant, injective and proper morphism (cf. \cite{schmitt05} Section 2.1).

The final step is give by choosing a linearisation on $B_{\us}$ and pulling this back to the parameter scheme $\fT$ via  
\[ \fT \hookrightarrow \fD \hookrightarrow \fB_{\us} \hookrightarrow B_{\us} =  B_{m_1} \times \cdots \times B_{m_2} \times B'_{\us} .\]
The schemes $B_i$ and $B'_{\us}$ have natural ample linearisations given by $\cL_i:=\cO_{B_i}(1)$ and $\cL':=\cO_{B'_{\us}}(1)$. The linearisation on $B_{\us}$ is given by taking a weighted tensor product of these linearisations and twisting by a character $\rho$ of $G=G_{\us}$. The character $\rho : G \ra \CC^*$ is the character determined by the rational numbers
\[ c_i :=  \left[ \sigma_i \left( \frac{P_{\us}(n)}{r_{\us} \delta (n)} - 1\right) \left( \frac{r_{\us}}{P_{\us}(n)} -  \frac{r^i}{P^i(n)}\right) - \frac{ r^i \eta_i}{P^i(n)} \right] \]
where $P_{\us}:= \sum_i \sigma_i P^i$;
that is, if these are integral we define
\[ \rho (g_{m_1},\cdots,g_{m_2})= \Pi_{i=m_1}^{m_2} \det g_i^{c_i} \]
and if not we can scale everything by a positive integer so that they become integral. We assume $n$ is sufficiently large so that $a_i = \sigma_i ({P_{\us}(n)} - r_{\us} \delta (n))/ r_{\us} \delta (n) + \eta_i $ is positive; these positive rational numbers $\underline{a}=(a_{m_1}, \dots , a_{m_2}, 1)$ are used to define a very ample linearisation
\[ \cL_{\underline{a}}:=\bigotimes_i \cL_i^{\otimes a_i} \otimes \cL \]
on $B_{\us}$ (where again if the $a_i$ are not integral we scale everything so that this is the case). The linearisation $\cL=\cL(\us,\uc)$ on $\mathfrak{T}$ is equal to the pullback of the very ample linearisation $\cL_{\underline{a}}^{\rho}$ on $B_{\us}$ where $\cL_{\underline{a}}^{\rho}$ denotes the linearisation obtained by twisting $\cL_{\underline{a}}$ by the character $\rho$. Of course this can also be viewed as a linearisation on the schemes $\fD'$ and $\fD$ too.

\subsection{Jordan-H\"older filtrations and S-equivalence}\label{sequiv sect}

The moduli space of ($\underline{\sigma}, \underline{\chi}$)-semistable complexes with invariants $P$ is constructed as an open subscheme of the projective GIT quotient
\[ \fD' /\!/_{\cL} G \]
given by the locus where $\zeta \neq 0$ (by definition $\fT$ is the open subscheme of $\fD'$ given by this condition). Recall that the GIT quotient is topologically the semistable set modulo S-equivalence where two orbits are S-equivalent if their orbit closures meet in the semistable locus. This notion can be expressed in terms of Jordan-H\"older filtrations as follows:

\begin{defn}\label{sequiv}
A Jordan--H\"{o}lder filtration of a $(\us,\uc)$-semistable complex $\cxE$ is a filtration by subcomplexes
\[ 0_\cdot= \cxE_{[0]} \subsetneqq \cxE_{[1]} \subsetneqq \cdots \subsetneqq \cxE_{[k]} = \cxE \] 
such that the successive quotients $\cxE_{[i]}/ \cxE_{[i-1]}$ are $(\us,\uc)$-stable and
\[ P^\mathrm{red}_{\us,\uc}(\cxE_{[i]}/ \cxE_{[i-1]}) =P^\mathrm{red}_{\us,\uc}(\cxE) .\]
This filtration is in general not canonical but the associated graded object 
\[ \mathrm{gr}_{ (\underline{\sigma}, \underline{\chi})}(\cxE) := \bigoplus_{j=1}^k \cxE_{[j]}/ \cxE_{[j-1]}\] 
is canonically associated to $\cxE$ up to isomorphism. We say two ($\underline{\sigma}, \underline{\chi}$)-semistable complexes are {S-equivalent} if their associated graded objects with respect to ($\underline{\sigma}, \underline{\chi}$) are isomorphic.
\end{defn}

Jordan--H\"{o}lder filtrations of ($\underline{\sigma}, \underline{\chi}$)-semistable complexes exist in exactly the same way as they do for semistable sheaves (for example, see \cite{gieseker_sheaves}).

\subsection{The moduli space}
We are now able to state one of the main results of \cite{schmitt05} for us: the existence of moduli spaces of $(\us,\uc)$-semistable complexes. Recall that there is a parameter scheme $\fT=\fT_{P}$ (which is the open subscheme of $\fD'$ cut out by the condition $\zeta \neq 0$) with an action by a reductive group $G=G_{\us}$ such that the orbits correspond to isomorphism classes of complexes and the stability parameters determine a linearisation $\cL$ of this action. The moduli space is given by taking the open subscheme of the projective GIT quotient  $\fD'/\!/_{\mathcal{L}} G$ given by $\zeta \neq 0$.

\begin{thm}(\cite{schmitt05}, p3)\label{schmitt theorem}
Let $X$ be a smooth complex manifold, $P$ be a collection of Hilbert polynomials of degree $\dim X$ and $(\us,\uc)$ be stability parameters. There is a quasi-projective coarse moduli space \[M^{(\underline{\sigma}, \underline{\chi})-ss}(X,{P})\] for S-equivalence classes of $(\us,\uc)$-semistable complexes over $X$ with Hilbert polynomials $P$.
\end{thm}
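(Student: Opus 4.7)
The plan is to verify that the GIT quotient sketched in the preceding subsections actually provides the claimed coarse moduli space, and that the open condition $\zeta\neq 0$ picks out exactly the semistable complexes up to S-equivalence. First I would establish boundedness of the family of $(\underline{\sigma},\underline{\chi})$-semistable complexes with invariants $P$ (which is remarked upon in $\S$\ref{param sch const}); this guarantees that there is an $n\gg 0$ for which every such complex has $n$-regular terms $\cE^i$, so that the parameter scheme $\fT=\fT(n)$ really does parametrise every isomorphism class we wish to include. Combined with the description of the $G$-action in $\S$\ref{gp act}, this already gives a local universal family, and orbits over $\fT$ correspond bijectively to isomorphism classes of complexes with invariants $P$.

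The main work is to identify the GIT-semistable locus of $\fT\subset\fD'$ with respect to the linearisation $\cL=\cL(\underline{\sigma},\underline{\chi})$ constructed in $\S$\ref{linearisation schmitt} with the subscheme of points whose associated complexes are $(\underline{\sigma},\underline{\chi})$-semistable. The natural route is the Hilbert--Mumford criterion: given a point $(q,d)\in\fT$ and a 1-PS $\lambda$ of $G$, one decomposes each $V^i$ into weight spaces for $\lambda$, producing a weighted filtration of the complex $\cxE$ by subcomplexes $\cxE_{\bullet}$. The weight $\mu^{\cL}((q,d),\lambda)$ then splits into three contributions coming from $\cL_i$, $\cL'$ and the twist $\rho$; a direct calculation (of the type indicated at the end of $\S$\ref{schmitt construction}) should show that these three pieces assemble, up to a positive constant, to an evaluation at $n\gg 0$ of the polynomial difference
\[
\bigl(\textstyle\sum_i\sigma_i\rk\cF^i\bigr)P^{\mathrm{red}}_{\underline{\sigma},\underline{\chi}}(\cxE) - \bigl(\textstyle\sum_i\sigma_i\rk\cE^i\bigr)P^{\mathrm{red}}_{\underline{\sigma},\underline{\chi}}(\cxF)
\]
for the subcomplex $\cxF\subset\cxE$ attached to $\lambda$. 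Because any destabilising subcomplex can be realised by such a weighted filtration (after possibly enlarging $n$), this identification, together with the normalisation in Remark \ref{normalise}, proves that $\fT\cap\fD'^{ss}$ equals the locus of $(\underline{\sigma},\underline{\chi})$-semistable complexes. This Hilbert--Mumford computation is, in my view, the central and hardest step; the subtle points are ensuring that $n$ can be chosen uniformly for all potential destabilisers (via the boundedness above) and checking that the character $\rho$ and the weights $a_i$ are precisely those that convert the GIT inequality into Schmitt's semistability inequality.

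Given this identification, I would then match S-equivalence with the closure relation among orbits in $\fT\cap\fD'^{ss}$. Running through the Jordan--H\"older filtration of $\S$\ref{sequiv sect} and performing the standard one-parameter degeneration that replaces a filtered complex by its associated graded, one sees that every orbit in the semistable locus has in its closure the orbit of $\mathrm{gr}_{(\underline{\sigma},\underline{\chi})}(\cxE)$; conversely, closed orbits correspond to polystable complexes. Hence two orbits collapse to the same point of $\fD'/\!/_\cL G$ iff the two complexes are S-equivalent. The GIT quotient $\fD'/\!/_\cL G$ exists as a projective scheme by Mumford's theorem because $\cL$ is ample on the projective scheme $\fD'$, and the condition $\zeta\neq 0$ is $G$-invariant and open, hence cuts out a quasi-projective open subscheme which I denote $M^{(\underline{\sigma},\underline{\chi})\text{-}ss}(X,P)$.

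Finally I would verify the coarse moduli property. For any scheme $S$ and any family of $(\underline{\sigma},\underline{\chi})$-semistable complexes on $X\times S$ with invariants $P$, local choices of trivialisations of the direct image sheaves yield, after passage to $n\gg 0$, a morphism from an atlas of $S$ to $\fT$; the ambiguity is precisely a $G$-torsor, so composing with $\fT\to\fD'/\!/_\cL G$ glues to a well-defined morphism $S\to M^{(\underline{\sigma},\underline{\chi})\text{-}ss}(X,P)$. The universal property with respect to coarse moduli (bijectivity on closed points of $\mathrm{Spec}\CC$ and universality among natural transformations to schemes) then follows from the GIT quotient's own universal property together with the orbit/S-equivalence correspondence established above.
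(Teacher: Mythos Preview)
The paper does not give a proof of this theorem at all: it is quoted from \cite{schmitt05} and simply stated after a summary of the GIT set-up (parameter scheme $\fT\subset\fD'$, the group $G_{\us}$, the linearisation $\cL(\us,\uc)$, and the description of the moduli space as the open locus $\zeta\neq 0$ inside $\fD'/\!/_\cL G$). So there is no ``paper's own proof'' to compare against; your proposal is in effect a sketch of Schmitt's argument rather than of anything in this paper.

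That said, your outline is broadly the right shape for how the result is proved in \cite{schmitt05}, with one correction worth flagging. A $1$-PS $\lambda$ of $G$ does not produce a single subcomplex $\cxF\subset\cxE$; it produces a weighted filtration $(\cxE_{(j)},k_j)$, and the Hilbert--Mumford function is a sum over all steps of this filtration (cf.\ Lemma~\ref{HM prop} in the paper). Your displayed polynomial involving a single $\cxF$ is therefore not the quantity $\mu^{\cL}$ actually computes. The passage from weighted filtrations to single subcomplexes is exactly the content of Remark~\ref{only need to worry about subcxs}: only after rescaling $(\delta,\ue)\mapsto(K\delta,\ue/K)$ for $K\gg 0$ can one restrict attention to filtrations by subcomplexes, and even then the test is against \emph{all} terms of the filtration, reducing to the usual convexity argument. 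Apart from this, the ingredients you list (boundedness, Hilbert--Mumford identification, Jordan--H\"older degeneration for S-equivalence, and the standard local-universal-family verification of the coarse moduli property) are precisely those used by Schmitt.
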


\subsection{The Hilbert-Mumford criterion}\label{calc HM for cx}

The Hilbert-Mumford criterion allows us to determine GIT semistable points by studying the actions of one-parameter subgroups (1-PSs); that is, nontrivial homomorphisms $\lambda : \CC^* \ra G$. In this section we give some results about the action of 1-PSs of $G=G_{\us}$ on the parameter space $\fT$ for complexes (see also \cite{schmitt05} Section 2.1). 

We firstly study the limit of a point in $z = (q,[\psi : 1]) \in \fT$ under the action of a 1-PS $\lambda : \CC^* \rightarrow G$. For this limit to exist we need to instead work with a projective completion $\overline{\fT}$ of $\fT$. We take a projective completion which is constructed as a closed subscheme of a projective bundle $\overline{\fD}$ over the projective scheme $\overline{Q} := \Pi_i \overline{Q}^i$ where $\overline{Q}^i$ is the closure of $Q^i$ in the relevant quot scheme. The points of $\overline{\fD}$ over $q=(q^i : V^i \otimes \cO_X(-n) \ra \cE^i)_i \in \overline{Q}$ are nonzero pairs $[ \psi : \zeta]$ defined up to scalar multiplication where $\psi : \oplus_i V^i \ra \oplus_i H^0(\mathcal{E}^i(n))$ and $\zeta \in \CC$. Then $\overline{\fT}$ is the subscheme of points $(q,[\psi : \zeta]) \in \overline{\fD}$ such that $\psi =H^0(d(n)) \circ (\oplus_i H^0(q^i(n)))$ where $d : \oplus_i \cE^i \ra \oplus_i \cE^i $ is given by $d^i : \cE^i \ra \cE^{i+1}$ which satisfy $d^i \circ d^{i-1} =0$. It is clear that the group action and linearisation $\mathcal{L}$ extend to this projective completion.

Recall that $G \cong \SL(V_{\us}) \cap \Pi_i \GL(V^i)$ where $V_{\us} = \oplus_i (V^i)^{\oplus \sigma_i}$ and so a 1-PS $\lambda : \CC^* \ra G$ is given by a collection of 1-PSs $\lambda_i : \CC^* \to \GL(V^i)$ which satisfy \[\Pi_i \mathrm{det} \lambda_i(t)^{\sigma_i}= 1.\] 
A 1-PS $\lambda_i$ of $\GL(V^i)$ gives rise to a weight space decomposition of $V^i = \oplus_{j=1}^s V^i_j$ indexed by a finite collection of integers $k_1 > \cdots > k_s$
where $V^i_j=\{ v \in V^i : \lambda_i(t) \cdot v =t^{k_j} v \}$. This gives a filtration
\[ 0 \subsetneq V^i_{(1)}\subsetneq \cdots \subsetneq V^i_{(s)} = V^i\]
where $V^i_{(j)} := V^i_1 \oplus \cdots \oplus V^i_j$ and if we take a basis of $V^i$ which is compatible with this filtration then 
\[ \lambda_i(t) = \left( \begin{array}{ccc}t^{k_1} I_{V^i_1} & & \\ & \ddots & \\ && t^{k_s}I_{V^i_s} \end{array}\right) \] is diagonal. We can diagonalise each of these 1-PSs $\lambda_i$ simultaneously so there is a decreasing sequence of integers $k_1 > \cdots > k_s$ and for each $i$ we have a decomposition $V^i = \oplus_{j=1}^s V^i_j$ (where we may have $V_j^i =0$), and a filtration
\[ 0 \subset V^i_{(1)} \subset V^i_{(2)} \subset \cdots \subset V^i_{(s)} = V^i \]
for which $\lambda_i$ is diagonal.

Let $z=(q, [\psi : 1])$ be a point in $ \fT$ where $q = (q^i : V^i \otimes \cO_X(-n) \ra \cE^i)_i \in Q$; then we can consider its limit
\[ \overline{z} := \lim_{t \to 0} \lambda(t) \cdot z \]
under the 1-PS $\lambda$. By \cite{huybrechts} Lemma 4.4.3,
\[\overline{q}^i:= \lim_{t \to 0} \lambda_i(t) \cdot q^i = \oplus_{j=1}^s q_j^i : \oplus_{j=1}^s V_j^i \otimes \cO_X(-n) \rightarrow \oplus_{j=1}^s \mathcal{E}^i_j \]
where $\mathcal{E}^i_j$ are the successive quotients in the filtration 
\[ 0 \subset \mathcal{E}^i_{(1)} \subset \cdots  \subset \mathcal{E}^i_{(j)} := q^i (V^i_{(j)} \otimes \mathcal{O}_X(-n) ) \subset \cdots \subset \mathcal{E}^i_{(s)}= \mathcal{E}^i \]
induced by $\lambda_i$. For each $i$ we have a filtration of the corresponding sheaf $\cE^i$ induced by $\lambda_i$ and the boundary maps can either preserve this filtration or not. If they do then we say $\lambda$ induces a filtration of the point $z$ (or corresponding complex $\cxE$) by subcomplexes. It is easy to check the limit depends on whether $\lambda$ induces a filtration by subcomplexes or not:

\begin{lemma}\label{lemma on fixed pts}
Let $z=(q, [\psi : 1])$ be a point in $ \fT$ and $\lambda$ be a 1-PS of $G$ as above with weights $k_1> \cdots > k_s$. Then the limit
\[ \overline{z}:=\lim_{t \to 0} \lambda(t) \cdot z =(\overline{q}, [\overline{\psi}: \overline{\zeta}])\]
is given by $\overline{q}$ as above and $\overline{\psi} =H^0( \overline{d}(n)) \circ (\oplus_i H^0(\overline{q}^i(n)))$ where $\overline{d}$ is given by $\overline{d}^i : \oplus_j \cE^i_j \ra \oplus_j \cE^{i+1}_j$. Moreover:
\begin{enumerate}
\renewcommand{\labelenumi}{\roman{enumi})}
\item  If $\lambda$ induces a filtration by subcomplexes, then $\overline{\zeta} = 1$ and $\overline{d}^{i} = \oplus_{j=1}^s (d^{i}_j: \cE^i_j \ra \cE^{i+1}_j)$. In particular $\overline{z} \in \fT$ and the corresponding complex is the graded complex associated to the filtration induced by $\lambda$.
\item If $\lambda$ does not induces a filtration by subcomplexes let \[N := \min_{i,j,l} \{ k_l - k_j :  d^{i}(\mathcal{E}^i_{(j)}) \nsubseteq \mathcal{E}^{i+1}_{(l-1)} \} < 0.\] Then $\overline{\zeta} =0$ and we have $\overline{d}^{i}(\mathcal{E}^i_j) \cap \mathcal{E}^{i+1}_l = 0$ unless $k_l - k_j = N$. In particular, the limit $\overline{z}=(\overline{q},[\overline{\psi}:0])$ is not in the parameter scheme $\fT$.
\end{enumerate}
\end{lemma}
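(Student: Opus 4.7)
The plan is to reduce the computation to two separate parts: the limit of $q$ in $Q = \prod_i Q^i$, and the transformation of the pair $[\psi:\zeta]$ under $\lambda(t)$ expressed in block form relative to the weight decompositions. The base component $\overline{q}^i = \oplus_j q^i_j$ is already given by \cite{huybrechts} Lemma 4.4.3 applied componentwise, so the real content is understanding $[\overline\psi : \overline\zeta]$.

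My first step is to write $\tilde\psi$ (equivalently, the boundary data $d^i$) in block form: using the simultaneous decomposition $V^i = \oplus_{j=1}^s V^i_j$, the component of $\tilde\psi$ sending $V^i_j$ to $V^{i+1}_l$ corresponds, under the isomorphisms $H^0(q^i(n))$, to the $(j,l)$-block of $d^i$ mapping $\cE^i_j$ into $\cE^{i+1}_l$ (via the inclusion $\cE^{i+1}_{(l)} \hookrightarrow \cE^{i+1}$ and projection onto the $l$-th graded piece in the limit). Conjugation by $\lambda(t)$ scales this $(j,l)$-block by $t^{k_l - k_j}$, so the entire behaviour of $\psi$ under $\lambda(t)$ is governed by these integer weights.

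For case (i), the hypothesis that $\lambda$ induces a filtration by subcomplexes means $d^i(\cE^i_{(j)}) \subseteq \cE^{i+1}_{(j)}$ for all $i,j$, which forces all $(j,l)$-blocks of $\tilde\psi$ with $l > j$ to vanish; since $k_1 > \cdots > k_s$, the surviving blocks have $k_l - k_j \geq 0$. In the limit $t \to 0$, the strictly positive exponents kill all off-diagonal blocks, leaving only the diagonal blocks which are $\lambda(t)$-fixed. Thus $(\psi,\zeta) = (\psi,1)$ needs no rescaling, so $\overline\zeta = 1$ and $\overline{d}^i = \oplus_j d^i_j$ is precisely the boundary map on the associated graded complex; in particular $\overline z \in \fT$.

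For case (ii), some $(j,l)$-block with $k_l - k_j < 0$ is nonzero, so the minimum $N$ of $k_l - k_j$ over such nonzero blocks is well-defined and strictly negative. The components of $\psi$ then scale as $t^{k_l-k_j}$, the smallest exponent being $N$, while $\zeta = 1$ is unscaled; to extract the limit in the projective bundle $\fD$ we rescale the pair by $t^{-N}$. Then blocks with $k_l - k_j > N$ acquire positive exponent $k_l - k_j - N$ and vanish, blocks with $k_l - k_j = N$ survive, and $\zeta$ becomes $t^{-N} \to 0$ since $-N > 0$. This gives $\overline\zeta = 0$ and the stated description of $\overline{d}^i$; in particular $\overline z \notin \fT$.

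The only point requiring real care is the bookkeeping in case (ii): one must check that the projective rescaling by $t^{-N}$ is the correct one (i.e.\ that no block grows faster than $t^N$) and that the resulting limit lies in the closed subscheme $\overline\fD$ of $\fD$ defined by the condition $\zeta = 0$ together with the degeneration of the quadratic relations $d^{i}\circ d^{i-1}=0$. The main conceptual obstacle, therefore, is identifying the block-structure condition on $\tilde\psi$ with the subcomplex-preservation condition on $d^i$, which is what makes the dichotomy $\overline\zeta = 1$ versus $\overline\zeta = 0$ transparent.
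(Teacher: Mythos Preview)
Your argument is correct and is exactly the computation the paper has in mind: the lemma is introduced with the sentence ``It is easy to check the limit depends on whether $\lambda$ induces a filtration by subcomplexes or not'' and no proof is supplied. Writing $\tilde\psi$ in block form relative to the weight decompositions $V^i=\oplus_j V^i_j$ and observing that conjugation by $\lambda(t)$ scales the $(l,j)$-block by $t^{k_l-k_j}$ is precisely how one makes the dichotomy transparent, and cases (i) and (ii) then follow as you describe.

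One technical point is worth flagging, since you single it out yourself as ``the main conceptual obstacle.'' From the commuting square $d^i\circ q^i = q^{i+1}\circ(\tilde\psi\otimes 1)$, the hypothesis $d^i(\cE^i_{(j)})\subseteq\cE^{i+1}_{(j)}$ only gives $\tilde\psi(V^i_{(j)})\subseteq H^0(q^{i+1}(n))^{-1}H^0(\cE^{i+1}_{(j)}(n))$, and this preimage can in principle strictly contain $V^{i+1}_{(j)}$ when the graded pieces $\cE^{i+1}_l$ fail to be $n$-regular (equivalently, when $\overline q\notin Q$). In that situation the conjugation picture alone would suggest a spurious blowing-up block. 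The resolution is that the trivialisation of the bundle via $\tilde\psi$ is only valid over $Q$; working instead with the Rees family of the filtered sheaves shows directly that $d^i$ extends across $t=0$ without a pole precisely when it preserves the filtration, yielding $\overline\zeta=1$ regardless. In the paper's applications the filtrations are Harder--Narasimhan filtrations with $n$ chosen large enough that all subquotients are $n$-regular, so $\overline q\in Q$ and your conjugation argument goes through verbatim.
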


\begin{rmk}\label{rmk on fixed pts}
Let $z=(q, [\psi : \zeta])$ be a point in $\overline{\fT}$ given by $q=(q^i : V^i \otimes \cO_X(-n) \ra \cE^i)_i \in \overline{Q}$ and $\psi = H^0({d}(n)) \circ \oplus_i H^0({q}^i(n))$ where $d$ is defined by homomorphisms $d^i : \cE^i \ra \cE^{i+1}$. If $z$ is fixed by $\lambda$, then the quotient sheaves are direct sums $q^i = \oplus_j q^i_j : V^i \otimes \cO_X(-n) \ra \oplus_j \cE^i_j$ and so the boundary map $d^i$ can be written as $d^i = \oplus_{j,l} d^i_{l,j}$ where $d^i_{l,j}: \cE^i_j \ra \cE^i_l$. The fixed point locus of a 1-PS $\lambda : \CC^* \ra G$ acting on $\overline{\fT}$ decomposes into 3 pieces (each piece being a union of connected components) where these pieces are given by:
\begin{itemize}
 \item A diagonal piece consisting of points $z$ where $d^i=\oplus_j d^i_{j,j} $ is diagonal for all $i$ and $\zeta \in \CC$.
\item A strictly lower triangular piece consisting of points $z$ where $d^i = \oplus_{j<l} d^i_{l,j} $ is strictly lower triangular for all $i$ and $\zeta = 0$.
\item A strictly upper triangular piece consisting of points $z$ where $d^i = \oplus_{j>l} d^i_{l,j} $ is strictly lower triangular for all $i$ and $\zeta = 0$.
\end{itemize}
Note that by Lemma \ref{lemma on fixed pts} above, if we have a point $z \in \fT$ its limit under $\lambda(t)$ as $t \to 0$ is in either the diagonal or strictly lower triangular piece. In fact, we have $\lim_{t \to 0} \lambda(t) \cdot z \in \fT$ if and only if $\lambda$ induces a filtration of $z$ by subcomplexes.
\end{rmk}

Now we understand the limit points of $\CC^*$-actions we can compute the weight of the $\CC^*$-action on fixed points. By definition the Hilbert-Mumford function
\[ \mu^{\cL}(z,\lambda) = \mu^{\cL}(\lim_{t \to 0} \lambda(t) \cdot z , \lambda)\]
is equal to minus the weight of the $\lambda(\CC^*)$-action on the fibre of $\cL$ over $\overline{z}:=\lim_{t \to 0} \lambda(t) \cdot z$. By the construction of $\mathcal{L}$ we have 
\begin{equation}\label{form1 for HM}
\mu^\mathcal{L}({z}, \lambda)=\mu^{\mathcal{L}'}(\varphi_{\us}({z}), \lambda)+ \sum_{i=m_1}^{m_2} a_i \mu^{\mathcal{L}_i}({q}^i, \lambda_i) - \rho \cdot \lambda 
\end{equation}
where $\varphi_{\us}({z})$ is the decoration associated to ${z}$ and $a_i$ and $\rho$ are the rational numbers and character used to define $\cL$ (c.f. $\S$\ref{linearisation schmitt}). We let $P_{\us} = \sum_i \sigma_i P^i$ and $r_{\us} = \sum_i \sigma_i r^i$.

\begin{lemma}(\cite{schmitt05}, Section 2.1)\label{HM prop}
Let $\lambda$ be a 1-PS of $G$ which corresponds to integers $k_1> \cdots > k_s$ and decompositions $V^i = \oplus_{j=1}^s V^i_j$ for $m_1 \leq i \leq m_2$ as above. Let $z=(q,[\psi : 1]) \in \mathfrak{T}$ where $q=(q^i : V^i \otimes \mathcal{O}_X(-n) \rightarrow \mathcal{E}^i)_i \in Q$; then
\begin{enumerate}
\renewcommand{\labelenumi}{\roman{enumi})}
 \item If $\lambda$ induces a filtration of $z$ by subcomplexes
\[ {\mu^{\cL}(z, \lambda )} =  \sum_{i=m_1}^{m_2}  \sum_{j=1}^s k_j \left(  \sigma_i \frac{P_{\us}(n)}{r_{\us}\delta(n)} + \eta_i  \right) \rk \cE^i_j  \]
where $ \cE^i_j =\cE^i_{(j)} / \cE^i_{(j-1)}$ and $\cE^i_{(j)} = q^i( V^i_{(j)} \otimes \cO_X(-n))$.
\item If $\lambda$ does not induce a filtration of $z$ by subcomplexes
\[ {\mu^{\mathcal{L}}(z, \lambda )} =  \sum_{i=m_1}^{m_2} \sum_{j=1}^s k_j  \left(  \sigma_i \frac{P_{\us}(n)}{r_{\us}\delta(n)} + \eta_i  \right) \rk \mathcal{E}^i_j - N  \]
where $N$ is the negative integer given in Lemma \ref{lemma on fixed pts}.
\end{enumerate}
\end{lemma}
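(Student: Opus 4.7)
The plan is to compute each of the three summands on the right-hand side of (\ref{form1 for HM}) separately, using the description of the limit $\overline{z}$ provided by Lemma \ref{lemma on fixed pts}, and then combine them. The Gieseker contribution $\sum_i a_i\mu^{\cL_i}(q^i,\lambda_i)$ and the character correction $-\rho\cdot\lambda$ depend only on the weight-space decompositions $V^i=\oplus_j V^i_j$ and the induced filtrations of the $\cE^i$, so they are identical in cases (i) and (ii). Only the decoration term $\mu^{\cL'}(\varphi_{\us}(z),\lambda)$ sees the distinction between the two cases, via the value of $\overline{\zeta}$.

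For each Gieseker term I would apply the standard Hilbert--Mumford calculation for Gieseker's embedding (cf.\ \cite{huybrechts}, Lemma 4.4.3): since $\overline{q}^i=\oplus_{j=1}^s q^i_j$ by Lemma \ref{lemma on fixed pts}, the weight takes the schematic form
\[
\mu^{\cL_i}(q^i,\lambda_i) = \sum_{j=1}^s k_j\bigl(P^i(n)\rk\cE^i_j-r^i\dim V^i_j\bigr).
\]
Multiplying by $a_i=\sigma_i(P_{\us}(n)-r_{\us}\delta(n))/(r_{\us}\delta(n))+\eta_i$ and summing over $i$ produces the expected $\rk\cE^i_j$-coefficients together with a residual double sum in $\dim V^i_j$. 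The character term unfolds directly from the definition of $\rho$ as $-\rho\cdot\lambda=-\sum_{i,j}c_i k_j \dim V^i_j$, where $c_i$ are the rational numbers given in $\S$\ref{linearisation schmitt}.

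For the decoration term I would use Schmitt's explicit description of $\varphi_{\us}$ as a rank-two decoration combining a $\psi$-component and a $\zeta$-component. The $\zeta$-component carries weight zero under $\lambda$, while the $\psi$-component's weights are governed by the shifts $k_l-k_j$ acting on the matrix entries $d^i_{l,j}$ of the boundary morphisms. In case (i) $\lambda$ preserves the filtration, so $\overline{\zeta}=1$ survives the limit and the decoration contributes only terms of the form $k_j(\text{stuff})\rk\cE^i_j$ plus residual $\dim V^i_j$-terms. In case (ii) the $\zeta$-component vanishes in the limit, and one must rescale by $t^{-N}$ to extract a nonzero limit of the $\psi$-component; this rescaling shifts the weight on the fibre of $\cL'$ over $\overline{\varphi_{\us}(z)}$ by exactly $-N$ relative to case (i).

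To conclude I would combine the three pieces and invoke the constraint $\sum_i\sigma_i\sum_j k_j\dim V^i_j=0$ (which is the condition for $\lambda$ to lie in $G=\ker\det_{\us}$), together with $\dim V^i=P^i(n)$, to show the residual $\dim V^i_j$-terms cancel and leave precisely the coefficients $k_j(\sigma_i P_{\us}(n)/(r_{\us}\delta(n))+\eta_i)\rk\cE^i_j$, plus the extra $-N$ in case (ii). The main technical obstacle is the bookkeeping in the decoration term: verifying that under Schmitt's choice of $c_i$ and $a_i$ all residual $\dim V^i_j$-sums truly conspire to cancel. This cancellation is precisely the design feature of Schmitt's linearisation, and tracking it amounts to the calculation outlined in \cite{schmitt05}, Section 2.1.
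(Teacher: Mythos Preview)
Your approach is essentially the paper's: decompose $\mu^{\cL}(z,\lambda)$ via (\ref{form1 for HM}), compute the Gieseker pieces, the character correction, and the decoration term separately, then use the $\SL(V_{\us})$-constraint $\sum_i\sigma_i\sum_j k_j\dim V^i_j=0$ to eliminate the $\dim V^i_j$-terms, with the $-N$ appearing only through the decoration in case (ii). Two small calibrations relative to the paper: the Gieseker weight the paper uses is $\mu^{\cL_i}(q^i,\lambda_i)=\sum_j k_j\bigl(\rk\cE^i_j-\dim V^i_j\,r^i/P^i(n)\bigr)$ (your ``schematic'' version carries an extra factor of $P^i(n)$), and the decoration term turns out to be purely $\sum_{i,j}k_j\sigma_i\rk\cE^i_j$ with no residual $\dim V^i_j$-contribution---all of the $\dim V^i_j$-cancellation takes place in the combination of the $a_i\mu^{\cL_i}$-terms with $-\rho\cdot\lambda$.
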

\begin{proof}
The weight of the action of $\lambda_i$ on $Q^i$ with respect to $\cL_i$ was calculated by Gieseker:
\[\mu^{\mathcal{L}_i}(q^i,\lambda_i) =\sum_{j=1}^s k_j \left(\rk\cE^i_j - \dim V^i_j \frac{r^i}{P^i(n)} \right) .\]
We can insert this into the formula (\ref{form1 for HM}) along with the exact values of $a_i$ and $c_i$ and use the fact that $\lambda$ is a 1-PS of $\SL(\oplus_i (V^i)^{\oplus \sigma_i})$ to reduce this to
\[ \mu^{\cL}(z, \lambda) =\mu^{\cL'}(\varphi_{\us}(z),\lambda) + \sum_{i=m_1}^{m_2} \sum_{j=1}^s k_j  \left(  \sigma_i \frac{P_{\us}(n)}{r_{\us}\delta(n)} -\sigma_i + \eta_i  \right) \rk \mathcal{E}^i_j . \]
Finally, by studying the construction of the decoration $\varphi_{\us}(z)$ associated to $z$ (for details see \cite{schmitt05}), we see that 
\[\mu^{\cL'}(\varphi_{\us}(z),\lambda) = \left\{ \begin{array}{ll}  \sum_{i=m_1}^{m_2} \sum_{j=1}^s k_j \sigma_i  \rk \cE^i_j & \mathrm{if} \: \lambda \mathrm{\:induces \: a \: filtration \: by \: subcomplexes} \\ \sum_{i=m_1}^{m_2} \sum_{j=1}^s k_j \sigma_i  \rk \cE^i_j - N & \mathrm{otherwise} \end{array} \right.\]
where $N$ is the negative integer of Lemma \ref{lemma on fixed pts}.
\end{proof}

\begin{rmk}\label{only need to worry about subcxs}
Schmitt observes that we can rescale the stability parameters by picking a sufficiently large integer $K$ and replacing $(\delta,\ue)$ with $(K \delta,\ue / K)$, so that for GIT semistability we need only worry about 1-PSs which induce filtrations by subcomplexes (cf. \cite{schmitt05}, Theorem 1.7.1). This explains why the test objects for (semi)stability in Definition \ref{stab defn} are subcomplexes rather than weighted sheaf filtrations.
\end{rmk}

\section{Stability conditions relating to cohomology}\label{sec on stab}

In this section we study these notions of stability for complexes in greater depth. As we are now studying complexes with varying invariants $P$ we do not impose any condition on $\ue$ (such as $\sum_i \eta_i r^i = 0$). An important property of these stability conditions is that we can describe any complex (of torsion free sheaves) as a finite sequence of extensions of semistable complexes by studying its Harder--Narasimhan filtration. 

In this section we describe a collection of stability conditions indexed by a small positive rational number $\epsilon$ which can be used to study the cohomology sheaves of a given complex. 
The stability parameters we are interested in are of the form $(\underline{1}, \delta \ue/\epsilon)$ where $\underline{1}$ is the constant vector and $\eta_i$ are strictly increasing rational numbers. For a given complex $\cxF$ with torsion free cohomology sheaves
\[ \cH^i(\cxF) := \ker d^i / \im d^{i-1}\] we show that the Harder--Narasimhan filtration of this complex encodes the Harder--Narasimhan filtration of the cohomology sheaves in this complex provided $\epsilon>0$ is sufficiently small.

\subsection{Harder--Narasimhan filtrations}

Given a choice of stability parameters $(\us, \uc)$ every complex has a unique maximal destabilising filtration known as its Harder--Narasimhan filtration:

 
 
\begin{defn}\label{HN filtr}
Let $\cxF$ be a complex and $(\us,\uc)$ be stability parameters. A {Harder--Narasimhan filtration} for $\cxF$ with respect to $(\us,\uc)$ is a filtration by subcomplexes
\[ 0_\cdot= \cxF_{(0)} \subsetneqq \cxF_{(1)} \subsetneqq \cdots \subsetneqq \cxF_{(s)} = \cxF \]
such that the successive quotients $\cxF_j=\cxF_{(j)} / \cxF_{(j-1)}$ are complexes of torsion free sheaves which are $(\us,\uc)$-semistable and have decreasing reduced Hilbert polynomials with respect to $(\us,\uc)$:
\[ P_{\us, \uc}^{\mathrm{red}}(\cxF_{1}) > P_{\us, \uc}^{\mathrm{red}}(\cxF_2) > \cdots > P_{\us, \uc}^{\mathrm{red}}(\cxF_s) .\]
The Harder--Narasimhan type of $\cxF$ with respect to $(\us, \uc )$ is given by $\tau =({P}_{1}, \cdots {P}_{s})$ where ${P}_{j} = (P_j^i)_{i \in \ZZ}$ is the tuple of Hilbert polynomials of the complex $\cxF_j$ so that
\[ P^i_j := P(\cF^i_j)=P(\cF^i_{(j)}/\cF^i_{(j-1)}). \]
\end{defn} 
 
The Harder--Narasimhan filtration can be constructed inductively from the maximal destabilising subcomplex:
 
\begin{defn}
Let $\cxF$ be a complex and $(\us,\uc)$ be stability parameters. A subcomplex $\cxF_1 \subset \cxF$ is a {maximal destabilising subcomplex} for $\cxF$ with respect to $(\us,\uc)$ if
\begin{enumerate}
\renewcommand{\labelenumi}{\roman{enumi})}
\item The complex $\cxF_1$ is $(\us,\uc)$-semistable,
\item For every subcomplex $\cxE $ of $\cxF$ such that $\cxF_1 \subsetneq \cxE$ we have
\[  P_{\us,\uc}^{\mathrm{red}}(\cxF_1)  >  P_{\us, \uc}^{\mathrm{red}}(\cxE). \]
\end{enumerate}
\end{defn}

The existence and uniqueness of the maximal destabilising subcomplex follows in exactly the same way as the original proof for vector bundles of Harder and Narasimhan \cite{harder}.

\subsection{The limit as $\epsilon$ tends to zero }

Recall that we are interested in studying the collection of parameters $(\underline{1}, \delta \ue/\epsilon) $ indexed by a small positive rational number $\epsilon$ where $\underline{1}$ is the constant vector, $\eta_i$ are strictly increasing rational numbers and $\delta$ is a positive rational polynomial of degree $\max(\dim X -1 , 0)$. In this section we study the limit as $\epsilon$ tends to zero.

Observe that
\[ P_{\underline{1}, \delta \ue/\epsilon}^{\mathrm{red}}(\cxE) \leq P_{\underline{1}, \delta \ue/\epsilon}^{\mathrm{red}}(\cxF)  \]
is equivalent to 
\[  \epsilon \frac{\sum_i P(\cE^i)}{\sum_i \rk \cE^i} - \delta \frac{\sum_i \eta_i \rk \cE^i }{\sum_i \rk \cE^i} \leq \epsilon \frac{\sum_i P(\cF^i)}{\sum_i \rk \cF^i} - \delta \frac{\sum_i \eta_i \rk \cF^i }{\sum_i \rk \cF^i}\]
and if we take the limit as $\epsilon \to 0$ we get
\begin{equation}\label{eps is zero ineq} \frac{\sum_i \eta_i \rk \cE^i}{\sum_i \rk\cE^i} \geq \frac{\sum_i \eta_i \rk \cF^i}{\sum_i \rk\cF^i}. \end{equation}
We say $\cxF$ is $(\underline{0}, \delta \ue)$-semistable if all nonzero proper subcomplexes $\cxE \subset \cxF$ satisfy the inequality (\ref{eps is zero ineq}).
This is a slight generalisation of the parameters consider by Schmitt in \cite{schmitt05} where we now allow $\sigma_i$ to be zero. These generalised stability parameters will no longer define an ample linearisation on the parameter space (cf. $\S$\ref{linearisation schmitt}), but we can still study the corresponding notion of semistability. 

\begin{lemma}\label{ss for sigma0}
Suppose $\eta_i < \eta_{i+1}$ for all integers $i$; then the only $(\underline{0}, \delta \ue)$-semistable complexes (of torsion free sheaves) are shifts of (torsion free) sheaves and complexes which are isomorphic to a shift of the cone of the identity morphism of (torsion free) sheaves.
\end{lemma}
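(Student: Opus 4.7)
The plan is to translate $(\underline{0},\delta\ue)$-semistability into a rank-only inequality and then to systematically constrain the differentials and supports of $\cxF$ by probing with carefully chosen subcomplexes and quotient complexes. Writing $\mu(\cxG):=\bigl(\sum_i \eta_i \rk \cG^i\bigr)\big/\bigl(\sum_i \rk \cG^i\bigr)$, semistability of $\cxF$ reads $\mu(\cxE)\ge\mu(\cxF)$ for every nonzero proper subcomplex $\cxE\subset\cxF$; by additivity of rank on short exact sequences of sheaves this is equivalent to $\mu(\cxF/\cxE)\le\mu(\cxF)$ for every quotient complex of positive total rank.

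\noindent\textbf{Constraints on the differentials.} For each $j$ two ``one-degree'' tests already reveal a great deal: the subsheaf $\ker d^j\subseteq \cF^j$ placed in degree $j$ (and zero elsewhere) is a subcomplex with $\mu=\eta_j$, and the sheaf $\cF^j/\im d^{j-1}$ placed in degree $j$ is a quotient complex with $\mu=\eta_j$. Semistability therefore forces $d^j$ to be injective whenever $\eta_j<\mu(\cxF)$, and forces $\im d^{j-1}$ to have full rank $\rk \cF^j$ in $\cF^j$ whenever $\eta_j>\mu(\cxF)$. Combining these with $\im d^{j-1}\subseteq \ker d^j$ (equivalently $d^j\circ d^{j-1}=0$) together with torsion-freeness of the $\cF^i$ gives $d^{j-1}=0$ when $\eta_j<\mu(\cxF)$ and $d^j=0$ when $\eta_j>\mu(\cxF)$. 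As the $\eta_i$ are strictly increasing there is a unique index $j^*$ with $\eta_{j^*}\le\mu(\cxF)<\eta_{j^*+1}$, and the only differential that can be nonzero is $d^{j^*}$ (up to the edge case $\eta_{j^*}=\mu(\cxF)$ treated below). With most differentials vanishing, for each $i\notin\{j^*,j^*+1\}$ the sheaf $\cF^i$ now stands alone in degree $i$ as a subcomplex (if $i<j^*$, with $\mu=\eta_i<\mu(\cxF)$) or as a quotient complex (if $i>j^*+1$, with $\mu=\eta_i>\mu(\cxF)$), so semistability forces $\cF^i=0$.

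\noindent\textbf{Endgame and main obstacle.} If only the degree $j^*$ survives, $\cxF$ is a shift of a torsion-free sheaf. Otherwise $\cxF$ is a two-term complex $\cF^{j^*}\xrightarrow{d^{j^*}}\cF^{j^*+1}$ with $d^{j^*}$ injective, $\rk \cF^{j^*}=\rk \cF^{j^*+1}$, and cokernel of rank zero (i.e.\ torsion). The main obstacle is precisely this final step: the rank-only inequality inherited from the $\epsilon\to 0$ limit cannot distinguish an isomorphism from an injection with torsion cokernel, so semistability by itself leaves a torsion ambiguity in $d^{j^*}$. I resolve this by invoking the running hypothesis of Section~\ref{sec on stab} that the cohomology sheaves of $\cxF$ are torsion-free: since $d^{j^*+1}=0$, the cokernel of $d^{j^*}$ equals $\cH^{j^*+1}(\cxF)$, which is torsion-free of rank $0$ and hence zero, so $d^{j^*}$ is an isomorphism and $\cxF$ is isomorphic to a shift of the cone of $\mathrm{id}_{\cF^{j^*}}$. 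Finally, the edge case $\eta_{j^*}=\mu(\cxF)$, in which both $d^{j^*-1}$ and $d^{j^*}$ could a priori be nonzero, is eliminated by probing with the subcomplex $(\cF^{j^*-1}\to \ker d^{j^*}\to 0)$: a short computation shows its $\mu$ is strictly below $\mu(\cxF)$ unless $\rk \cF^{j^*-1}=0$, and combined with $\mu(\cxF)=\eta_{j^*}$ this forces $\rk \cF^{j^*+1}=0$ as well, reducing to the shift-of-sheaf case.
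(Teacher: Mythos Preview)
Your argument follows essentially the same route as the paper's: both probe $\cxF$ with single-degree sub- and quotient complexes built from kernels and images of the boundary maps, and both use the strict monotonicity of $\ue$ to force the support down to at most two consecutive degrees. The paper organises this as a dichotomy (all $d^i$ zero versus some $d^k\neq 0$, using $\im d^k$ as both a subcomplex in degree $k{+}1$ and a quotient in degree $k$ to locate $k$), whereas you sweep uniformly over all $j$ via $\ker d^j$ and $\cF^j/\im d^{j-1}$; these are cosmetic differences. You omit the verification that shifts of sheaves and cones on identities are themselves $(\underline{0},\delta\ue)$-semistable, which the paper does include; the lemma as stated only asserts the ``only'' direction, so this is not a gap in your proof of the lemma, though the converse is implicitly used in Lemma~\ref{HN filtr for sigma0} and Theorem~\ref{HN filtrations for epsiloneta}.

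The one place you are more careful than the paper is the torsion obstacle in the endgame. You are right that the rank-only inequality cannot by itself distinguish an isomorphism from an injection with nonzero torsion cokernel: a two-term complex $\cF^k\hookrightarrow\cF^{k+1}$ with $\rk\cF^k=\rk\cF^{k+1}$ and torsion cokernel is genuinely $(\underline{0},\delta\ue)$-semistable (every nonzero subcomplex has $\rk\cE^k\le\rk\cE^{k+1}$, hence $\mu(\cxE)\ge(\eta_k+\eta_{k+1})/2=\mu(\cxF)$). The paper's proof asserts that considering the kernel and cokernel of $d^k$ forces $d^k$ to be an isomorphism, but the cokernel test is vacuous when the cokernel has rank zero. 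Your fix, invoking the torsion-free cohomology hypothesis running through this section, is exactly what is needed and matches how the lemma is subsequently applied (Corollary~\ref{HNF corr coh rmk} and Theorem~\ref{HN filtrations for epsiloneta} already assume it).
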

\begin{proof}
If $\cxF$ is a shift of a torsion free sheaf $\cF^k$, then a subcomplex $\cxE$ of $\cxF$ is just a subsheaf and it is trivial to verify it is $(\underline{0}, \delta \ue)$--semistable. If $\cxF$ is isomorphic to a shift of the cone on the identity morphism of a torsion free sheaf, then there is an integer $k$ such that $d^k$ is an isomorphism and $\cF^i= 0$ unless $i = k$ or $k+1$. A subcomplex $\cxE$ of $\cxF$ is either concentrated in position $k+1$ or concentrated in $[k,k+1]$. In the second case we must have that $\rk \cE^k \leq \rk \cE^{k+1}$ and in both cases it is easy to verify the inequality for $(\underline{0}, \delta \ue)$-semistability using the fact that the $\eta_i$ are strictly increasing.

Now suppose $\cxF$ is $(\underline{0}, \delta \ue)$-semistable. If all the boundary morphisms $d^i$ are zero, then each nonzero sheaf $\cF^k$ is both a subcomplex and quotient complex and so by semistability
\[ \eta_k = \frac{\sum_i \eta_i \rk \cF^i}{\sum_i \rk\cF^i}. \]
As the $\eta_i$ are strictly increasing, there can be at most one $k$ such that $\cF^k$ is nonzero. If there is a nonzero boundary map $d^k$ then the image of this boundary map can be viewed as a quotient complex (in position $k$) and a subcomplex (in position $k+1$) so that
\begin{equation}\label{eqn for eta 1} \eta_{k} \leq  \frac{\sum \eta_i \rk \cF^i}{\sum \rk\cF^i} \leq \eta_{k+1} .\end{equation}
As the $\eta_i$ are strictly increasing, there can be at most one $k$ such that $d^k$ is nonzero. From above, we see that $\cF^i = 0$ unless $i =k$ or $ k+1$. As the $\eta_i$ are increasing, we see that the inequalities of (\ref{eqn for eta 1}) must be strict. We can consider the kernel and cokernel of $d^k$ as a subcomplex and quotient complex respectively and by comparing the inequalities obtained from semistability with (\ref{eqn for eta 1}), we see that $d^k$ must be an isomorphism and so $\cxF$ is isomorphic to a shift of the cone on the identity morphism of $\cF^{k+1}$.
\end{proof}

\begin{lemma}\label{HN filtr for sigma0}
Suppose $\eta_i < \eta_{i+1}$ for all integers $i$ and $\cxF $ is a complex. Let $k$ be the minimal integer for which $\cF^k$ is nonzero. Then the maximal destabilising subcomplex $\cxF_{(1)}$ of $\cxF $  with respect to $(\underline{0},\delta \ue)$ is 
\[ \cxF_{(1)} = \left\{ \begin{array}{cccccccccl} \d  \ra & 0 & \ra & \ker d^k & \ra & 0 &\ra & 0 & \ra \d & \quad \mathrm{if} \: \ker d^k \neq 0,  \\ \d  \ra & 0 & \ra &\cF^k & \ra & \im d^k &\ra  & 0 & \ra \d & \quad \mathrm{if} \: \ker d^k = 0.  \end{array} \right. \]
\end{lemma}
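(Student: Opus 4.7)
The plan is to verify that the candidate $\cxF_{(1)}$ described in the statement satisfies the two defining properties of a maximal destabilising subcomplex, after which its uniqueness will identify it as claimed. Since $\us = \underline{0}$, the notion of reduced Hilbert polynomial degenerates and the relevant comparison is the one obtained as $\epsilon \to 0$ in (\ref{eps is zero ineq}): a subcomplex $\cxE$ destabilises $\cxF$ exactly when the weighted average $\mu(\cxE) := (\sum_i \eta_i \rk\cE^i)/(\sum_i \rk\cE^i)$ is strictly smaller than $\mu(\cxF)$, and $\cxF_{(1)}$ being maximal destabilising amounts to being $(\underline{0},\delta\ue)$-semistable and having strictly smaller slope than every subcomplex properly containing it.

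First I would check that the candidate $\cxF_{(1)}$ is actually a subcomplex of $\cxF$ (immediate in both cases from the definitions of $\ker d^k$ and $\im d^k$) and that it is $(\underline{0},\delta\ue)$-semistable. If $\ker d^k \neq 0$ it is simply the shift of the torsion free sheaf $\ker d^k$ to position $k$; if $\ker d^k = 0$ the map $\cF^k \to \im d^k$ is surjective by definition of image and injective by hypothesis, hence an isomorphism of torsion free sheaves, so $\cxF_{(1)}$ is isomorphic to a shift of the cone on the identity of $\cF^k$. Either way Lemma \ref{ss for sigma0} gives semistability. A short computation then yields $\mu(\cxF_{(1)}) = \eta_k$ in the first case and $\mu(\cxF_{(1)}) = (\eta_k+\eta_{k+1})/2$ in the second (using $\rk\cF^k = \rk\im d^k$).

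The main step is the maximality. Let $\cxE \subset \cxF$ be a subcomplex with $\cxE \supsetneq \cxF_{(1)}$. Because $\cF^i = 0$ for $i < k$, the subsheaves $\cE^i$ also vanish in those degrees, so every weight appearing in $\mu(\cxE)$ is at least $\eta_k$. In Case 1, either $\cE^k \supsetneq \ker d^k$, in which case $d^k|_{\cE^k} \neq 0$ and the subcomplex condition forces $\cE^{k+1} \supseteq d^k(\cE^k) \neq 0$; or already $\cE^i \neq 0$ for some $i > k$. In either scenario $\cxE$ has positive rank in some degree $i > k$ with weight $\eta_i > \eta_k$, so $\mu(\cxE) > \eta_k = \mu(\cxF_{(1)})$. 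In Case 2 we must have $\cE^k = \cF^k$ and $\cE^{k+1} \supseteq \im d^k$, so any strict enlargement adds rank in degrees $\geq k+1$ at weights $\geq \eta_{k+1}$, which strictly exceeds the current average $(\eta_k+\eta_{k+1})/2$, giving $\mu(\cxE) > \mu(\cxF_{(1)})$.

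The only mildly subtle point is the Case 1 bookkeeping: one must use the subcomplex property to convert the set-theoretic failure $\cE^k \not\subseteq \ker d^k$ into the assertion that $\cE^{k+1}$ is nonzero. Everything else is driven by the strict monotonicity of $\ue$, which guarantees that any rank redistributed to a higher position strictly raises the weighted average. Once both conditions are verified, the uniqueness of the maximal destabilising subcomplex (which follows, as the paper notes, by the classical Harder--Narasimhan argument) identifies $\cxF_{(1)}$ as the maximal destabiliser.
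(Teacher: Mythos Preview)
Your proof is correct and follows essentially the same approach as the paper: verify semistability of the candidate via Lemma~\ref{ss for sigma0}, then show that any strictly larger subcomplex $\cxE$ must carry nonzero rank in a degree $>k$ (using the subcomplex condition to force $\cE^{k+1}\neq 0$ when $\cE^k\supsetneq\ker d^k$), and conclude by the strict monotonicity of $\ue$ that the weighted average increases. Your treatment is in fact slightly more explicit than the paper's, which organizes the same bookkeeping through the index set $I=\{i:\cE^i\neq\cF_{(1)}^i\}$ and leaves Case~2 to the reader.
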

\begin{proof}
By Lemma \ref{ss for sigma0} these complexes are both $(\underline{0},\delta \ue)$-semistable. In order to prove this gives the maximal destabilising subcomplex we need to show that if $\cxF_{(1)} \subsetneq \cxE \subset \cxF$, then
\[ \frac{\sum_i \eta_i \rk \cE^i}{\sum  \rk \cE^i} > \frac{\sum_i \eta_i \rk \cF_{(1)}^i}{\sum  \rk \cF_{(1)}^i}. \]
As $\cxE \neq \cxF_{(1)}$, the set
\[ I:= \{ i \in \mathbb{Z} : \cE^i \neq \cF^i_{(1)} \}  \]
is nonempty. We note that if $i \in I$, then $\cE^i \neq 0 $.

Suppose $\ker d^k \neq 0$. If $k \in I$, then also $k+1 \in I$ as $\ker d^k \subsetneq \cE^k$ and so $0 \neq d(\cE^k) \subset \cE^{k+1}$. As the $\eta_i$ are strictly increasing we have $ \eta_{k} \rk \cE^k + \eta_{k+1} \rk \cE^{k+1} > \eta_k(\rk \cE^{k}  + \rk \cE^{k+1} ). $ If $i > k+1$ and belongs to $I$, then $ \eta_{i} \rk \cE^i  > \eta_k\rk \cE^{i} $. So
\[ \sum_{i \in I} \eta_i \rk \cE^i  > \sum_{i \in I} \eta_k \rk \cE^{i} \quad \mathrm{and} \quad  \sum_{i \notin I} \eta_i \rk \cE^i  = \sum_{i \notin I} \eta_k \rk \cE^{i}; \]
hence
\[  \frac{\sum \eta_i \rk \cE^i}{\sum  \rk \cE^i} >\eta_k = \frac{\sum \eta_i \rk \cF_{(1)}^i}{\sum  \rk \cF_{(1)}^i} .\]

The case when $\ker d^k = 0$ is proved in the same way.
\end{proof}

\begin{cor}\label{HNF corr coh rmk}
If $\cxF$ has torsion free cohomology sheaves, then its Harder--Narasimhan filtration with respect to these stability parameters picks out the kernels and images of the boundary maps successively:
\[\begin{array}{cccccccccc}
\cxF_{(1)} : \quad & \cdots \rightarrow 0 & \rightarrow & \ker d^k & \rightarrow & 0 & \rightarrow & 0 & \rightarrow & 0 \cdots \\
& & & \cap & & \cap & & \cap & & \\
\cxF_{(2)} : \quad & \cdots \rightarrow 0 & \rightarrow & \cF^k & \rightarrow & \im d^k & \rightarrow & 0 & \rightarrow & 0 \cdots \\
& & & \cap & & \cap & & \cap & & \\
\cxF_{(3)} : \quad & \cdots \rightarrow 0 & \rightarrow & \cF^k & \rightarrow & \ker d^{k+1} & \rightarrow & 0 \cdots & \rightarrow & 0 \\
& & & \cap & & \cap & & \cap & & \\
\cxF_{(4)} : \quad & \cdots \rightarrow 0 & \rightarrow & \cF^k & \rightarrow & \cF^{k+1} & \rightarrow & \im d^{k+1} & \rightarrow & 0 \cdots \\
& &  &  \vdots & & \vdots & & \vdots & &
\end{array} \]
In particular, the successive quotients are $\cH^i(\cxF)[-i]$ or isomorphic to $\mathrm{Cone}(\mathrm{Id}_{\im d^i})[-(i+1)] $.
\end{cor}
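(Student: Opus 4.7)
The plan is to build the filtration iteratively by repeatedly applying Lemma \ref{HN filtr for sigma0}: start from $\cxF$, let $\cxF_{(1)}$ be the maximal destabilising subcomplex given by the lemma, then apply the lemma again to the quotient $\cxF/\cxF_{(1)}$ to produce the next piece $\cxF_{(2)}/\cxF_{(1)}$, and continue. Since the Harder--Narasimhan filtration is unique and can be constructed this way (the maximal destabilising subcomplex of the quotient, lifted, supplies the next step), this will be the required filtration provided Lemma \ref{HN filtr for sigma0} can be invoked at each stage and the successive quotients have the claimed form.

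First I would check that the hypotheses are preserved by the relevant quotients. The subcomplexes $\cxF_{(j)}$ in the diagram are built from subsheaves of the form $\ker d^i$ and $\im d^i$, and the associated quotient sheaves are $\im d^i$ and $\ker d^{i+1}/\im d^i = \cH^{i+1}(\cxF)$. All of these are torsion free: $\ker d^i$ and $\im d^i$ as subsheaves of the torsion free sheaves $\cF^i$ and $\cF^{i+1}$, and $\cH^i(\cxF)$ by hypothesis. Consequently, at each stage the quotient complex $\cxF/\cxF_{(j)}$ still has torsion free terms and torsion free cohomology sheaves, so Lemma \ref{HN filtr for sigma0} applies again.

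Next I would identify the successive quotients by running out the iteration. If $k$ is minimal with $\cF^k \neq 0$ and $\ker d^k \neq 0$ (so $\cH^k(\cxF) = \ker d^k$, since there is nothing in degree $k-1$), the first case of Lemma \ref{HN filtr for sigma0} gives $\cxF_{(1)} = \cH^k(\cxF)[-k]$. The quotient then has $\ker d'^k = 0$ in degree $k$, so the second case of the lemma identifies the next destabilising piece as the two-term complex $\im d^k \xrightarrow{\mathrm{Id}} \im d^k$, which lifts to $\cxF_{(2)}/\cxF_{(1)} \cong \mathrm{Cone}(\mathrm{Id}_{\im d^k})[-(k+1)]$. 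The next iteration picks up $\ker d^{k+1}/\im d^k = \cH^{k+1}(\cxF)[-(k+1)]$, and the alternation continues.

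Finally I would verify the Harder--Narasimhan conditions. Semistability of each successive quotient is immediate from Lemma \ref{ss for sigma0}, since each is either a shift of a torsion free sheaf or a shift of a cone on the identity of a torsion free sheaf. For the strict decrease of reduced Hilbert polynomials, it suffices to check strict increase of $\nu(\cxE):=\sum_i \eta_i \rk \cE^i / \sum_i \rk \cE^i$, since (\ref{eps is zero ineq}) shows this is the content, with a sign, of $(\underline{0},\delta\ue)$-semistability. One computes $\nu(\cH^i(\cxF)[-i]) = \eta_i$ and $\nu(\mathrm{Cone}(\mathrm{Id}_{\im d^i})[-(i+1)]) = (\eta_i + \eta_{i+1})/2$, which interleave as $\eta_k < (\eta_k+\eta_{k+1})/2 < \eta_{k+1} < (\eta_{k+1}+\eta_{k+2})/2 < \cdots$ by the strict monotonicity of the $\eta_i$. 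The main obstacle is cleanly bookkeeping the degenerate cases: when some $\cH^i(\cxF) = 0$ or some $d^i = 0$, the iteration simply skips the corresponding filtration step, and I would phrase the induction so as to contract these vanishing pieces without disturbing the pattern.
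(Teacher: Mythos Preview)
Your proposal is correct and is precisely the argument the paper intends: the corollary is stated without proof, as an immediate consequence of iterating Lemma~\ref{HN filtr for sigma0} (the paper having already noted that the Harder--Narasimhan filtration is built inductively from the maximal destabilising subcomplex). Your extra care in checking that the quotient complexes retain torsion free terms, and in directly verifying the strict decrease via the values $\eta_i$ and $(\eta_i+\eta_{i+1})/2$, is a welcome elaboration of what the paper leaves implicit.
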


\subsection{Semistability with respect to $(\underline{1}, \delta \ue/\epsilon)$ }
Recall that a torsion free sheaf $\cF$ is semistable (in the sense of Gieseker) if for all subsheaves $\cE \subset \cF$ we have
\[ \frac{P(\cE)}{\rk \cE} \leq \frac{P(\cF)}{\rk \cF}. \]
A torsion free sheaf can be viewed as a complex (by placing it in any position $k$) and it is easy to see that $(\us, \uc)$-semistability of the associated complex is equivalent to (Gieseker) semistability of the sheaf. For $\epsilon$ a small positive rational number we consider the stability parameters $(\underline{1}, \delta \ue/\epsilon)$ where $\underline{\eta}$ are strictly increasing. 

\begin{lemma}\label{lemma X} Suppose $\cxF$ is a complex for which there is an $\epsilon_0>0$ such that for all positive rational $\epsilon < \epsilon_0$ this complex is $(\underline{1}, \delta \ue/\epsilon)$-semistable. Then $\cxF$ is either a shift of a Gieseker semistable torsion free sheaf or isomorphic to a shift of the cone on the identity morphism of a semistable torsion free sheaf.
\end{lemma}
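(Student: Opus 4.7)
My plan is to take the limit $\epsilon \to 0$ to reduce the problem to Lemma \ref{ss for sigma0}, which forces $\cxF$ into a short list of shapes, and then test Gieseker semistability of the underlying sheaf using well-chosen subcomplexes.

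First I want to show $\cxF$ is $(\underline{0}, \delta \ue)$-semistable. Fix any subcomplex $\cxE \subset \cxF$ and write $A(\cxE) := \sum_i P(\cE^i)/\sum_i \rk \cE^i$ and $B(\cxE) := \sum_i \eta_i \rk \cE^i / \sum_i \rk \cE^i$; then the $(\underline{1}, \delta \ue/\epsilon)$-semistability condition rearranges to the polynomial inequality
\[ \epsilon\bigl(A(\cxE) - A(\cxF)\bigr) \leq \delta\bigl(B(\cxE) - B(\cxF)\bigr). \]
The key point is that the degree-$\dim X$ coefficient of the Hilbert polynomial of a torsion free sheaf depends only on its rank, so the top coefficients of $A(\cxE)$ and $A(\cxF)$ cancel and $A(\cxE) - A(\cxF)$ has degree at most $\dim X - 1$. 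If we had $B(\cxE) < B(\cxF)$, then $\delta(B(\cxE) - B(\cxF))$ would be a polynomial of degree exactly $\dim X - 1$ with negative leading coefficient (since $\delta$ is positive), while the degree-$(\dim X - 1)$ coefficient of $\epsilon(A(\cxE) - A(\cxF))$ has absolute value $O(\epsilon)$. For $\epsilon$ small enough the required polynomial inequality would therefore fail, contradicting the hypothesis. Hence $B(\cxE) \geq B(\cxF)$ for every subcomplex, so $\cxF$ is $(\underline{0}, \delta \ue)$-semistable, and Lemma \ref{ss for sigma0} forces $\cxF$ to be a shift of a torsion free sheaf $\cG$ or a shift of $\mathrm{Cone}(\mathrm{Id}_{\cG})$ for such a sheaf.

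To finish, I test Gieseker semistability of $\cG$ with carefully chosen subcomplexes. In the shifted-sheaf case $\cxF$ is concentrated in a single degree $k$, and any subsheaf $\cH \subset \cG$ yields a subcomplex $\cxE$ with $B(\cxE) = \eta_k = B(\cxF)$; the semistability inequality then collapses (after dividing through by $\epsilon > 0$) to $P(\cH)/\rk \cH \leq P(\cG)/\rk \cG$. In the cone case $\cxF$ is concentrated in degrees $k, k+1$ with $d^k : \cG \to \cG$ an isomorphism, and for any $\cH \subset \cG$ the choice $\cE^k = \cH$, $\cE^{k+1} = d^k(\cH)$ defines a subcomplex with $B(\cxE) = (\eta_k + \eta_{k+1})/2 = B(\cxF)$ and $A(\cxE) = P(\cH)/\rk \cH$, so the inequality again reduces to Gieseker semistability of $\cG$. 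I expect the main subtlety to be in the first step: the semistability condition is an inequality of polynomials rather than of numbers, and one must exploit the rank-independence of the top Hilbert coefficient to make the $B$-term dominate as $\epsilon \to 0$; the Gieseker test in the second step is then essentially direct.
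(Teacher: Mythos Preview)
Your proof is correct and follows essentially the same route as the paper's: first pass to the limit $\epsilon \to 0$ to deduce $(\underline{0},\delta\ue)$-semistability and invoke Lemma~\ref{ss for sigma0}, then test Gieseker semistability of the underlying sheaf via the obvious subcomplexes (a subsheaf in the shifted case, $\mathrm{Cone}(\mathrm{id}_{\cH})$ in the cone case). Your treatment of the first step is in fact more careful than the paper's, which simply asserts the limit; you correctly identify that the cancellation of top Hilbert coefficients is what makes the polynomial inequality go through for small $\epsilon$.
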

\begin{proof}
By studying the limit as $\epsilon$ tends to zero we see that $\cxF$ is $(\underline{0}, \delta \ue)$-semistable and so $\cxF$ is either a shift of a torsion free sheaf or isomorphic to a shift of the cone on the identity morphism of a torsion free sheaf by Lemma \ref{ss for sigma0}. If $\cxF$ is the shift of a sheaf, then $(\underline{1}, \delta \ue/\epsilon)$-semistability for any $0<\epsilon < \epsilon_0 $ implies this sheaf must be Gieseker semistable. If $\cxF$ is the cone on the identity morphism of a torsion free sheaf $\cF$, then $\cF$ must be semistable as for any subsheaf $\mathcal{F}' \subset \mathcal{F}$ we can consider $\mathrm{Cone}(\mathrm{id}_{\cF'})$ as a subcomplex and so
\[  \frac{P(\cF')}{\rk \cF'} - \delta \frac{\eta_k + \eta_{k+1}}{2\epsilon} \leq   \frac{P(\mathcal{F})}{\rk \mathcal{F}} - \delta \frac{\eta_k + \eta_{k+1}}{2 \epsilon} \]
by $(\underline{1}, \delta \ue/\epsilon)$-semistability for any $0 < \epsilon < \epsilon_0$.
 \end{proof}
 
 \begin{rmk} \label{rmk on sigma0}
Conversely, a shift of a semistable torsion free sheaf or a shift of a cone on the identity morphism of a semistable torsion free sheaf is $(\underline{1}, \delta \ue/\epsilon)$-semistable for any $\epsilon >0$.
 \end{rmk}
 
\begin{rmk}
As $(\underline{1}, \delta \ue/\epsilon)$-semistability of a complex associated to a torsion free sheaf $\cF$ is equivalent to (Gieseker) semistability of $\cF$, it follows that the Harder--Narasimhan filtration of the associated complex with respect to $(\underline{1}, \delta \ue/\epsilon)$ is given by the Harder--Narasimhan filtration of the sheaf. Similarly we see that the Harder--Narasimhan filtration of $\mathrm{Cone}(\mathrm{id}_{\cF})$ with respect to $(\underline{1}, \delta \ue/\epsilon) $ is given by taking cones on the identity morphism of each term in the Harder--Narasimhan filtration of the sheaf $\cF$.
\end{rmk}

We have seen that the Harder--Narasimhan filtration of $\cxF$ with respect to $(\underline{0}, \delta \ue)$ picks out the successive kernels and images of each boundary map. In particular, the successive quotients are either of the form $\cH^i(\cxF)[-i]$ or isomorphic to $\mathrm{Cone}(\mathrm{Id}_{\im d^i})[-(i+1)] $.

\begin{thm}\label{HN filtrations for epsiloneta}
Let $\cxF$ be a complex concentrated in $[m_1,m_2]$ with torsion free cohomology sheaves. There is an $\epsilon_0 >0$ such that for all rational $0<\epsilon < \epsilon_0$ the Harder--Narasimhan filtration of $\cxF$ with respect to $(\underline{1}, \delta \ue/\epsilon)$ is given by refining the Harder--Narasimhan filtration of $\cxF$ with respect to $(\underline{0}, \delta \ue)$ by the Harder--Narasimhan filtrations of the cohomology sheaves $\cH^i(\cxF)$ and image sheaves $\im d^i$. 
\end{thm}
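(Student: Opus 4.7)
The plan is to produce the claimed filtration explicitly and then identify it as the Harder--Narasimhan filtration by appealing to uniqueness: any filtration of $\cxF$ whose successive quotients are $(\underline{1}, \delta\ue/\epsilon)$-semistable torsion free complexes with strictly decreasing reduced Hilbert polynomials must be the Harder--Narasimhan filtration with respect to $(\underline{1}, \delta\ue/\epsilon)$. Starting from the $(\underline{0}, \delta\ue)$-HN filtration of $\cxF$, whose layers are exhibited by Corollary~\ref{HNF corr coh rmk} as shifts $\cH^i(\cxF)[-i]$ and as cones $\mathrm{Cone}(\mathrm{Id}_{\im d^i})[-(i+1)]$, I refine each layer by the Harder--Narasimhan filtration of its underlying torsion free sheaf (the cohomology sheaf $\cH^i(\cxF)$ in the first case and the image sheaf $\im d^i$ in the second). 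The image sheaves are torsion free as subsheaves of the $\cF^{i+1}$, and the cohomology sheaves are torsion free by hypothesis, so these sheaf HN filtrations are defined. Each resulting successive quotient is either a shift of a Gieseker-semistable torsion free sheaf or the shift of a cone on the identity of such a sheaf, and by Remark~\ref{rmk on sigma0} either type is $(\underline{1}, \delta\ue/\epsilon)$-semistable for every $\epsilon > 0$.

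The substantive step is to verify that the reduced Hilbert polynomials
\[
P^{\mathrm{red}}_{\underline{1}, \delta\ue/\epsilon}(\cxG) = \frac{\sum_i P(\cG^i)}{\sum_i \rk \cG^i} - \frac{\delta}{\epsilon}\cdot\frac{\sum_i \eta_i \rk \cG^i}{\sum_i \rk \cG^i}
\]
are strictly decreasing across consecutive pieces $\cxG_1, \cxG_2$ of the refined filtration, with $\cxG_1$ the earlier of the two. When $\cxG_1$ and $\cxG_2$ lie in the same $(\underline{0}, \delta\ue)$-layer their $\eta$-weighted averages $\sum_i \eta_i \rk / \sum_i \rk$ coincide (both equal $\eta_i$ in a shift-layer or $(\eta_i + \eta_{i+1})/2$ in a cone-layer), the $\delta/\epsilon$-term cancels, and the required inequality reduces to the strict sheaf HN inequality for the underlying cohomology or image sheaf, which holds for every $\epsilon > 0$. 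When $\cxG_1$ and $\cxG_2$ lie in consecutive $(\underline{0}, \delta\ue)$-layers the $(\underline{0}, \delta\ue)$-HN property produces a strictly positive rational gap between the $\eta$-weighted averages of $\cxG_2$ and $\cxG_1$, so the difference $P^{\mathrm{red}}(\cxG_1) - P^{\mathrm{red}}(\cxG_2)$ is an $\epsilon$-independent polynomial of degree at most $\dim X - 1$ plus $(\delta/\epsilon)$ times a strictly positive rational constant; for all sufficiently small $\epsilon$ the $\delta/\epsilon$-term dominates and the difference has positive leading coefficient.

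There are only finitely many cross-layer transitions in the filtration, so a single $\epsilon_0 > 0$ can be chosen small enough to make the above dominance argument work uniformly for all of them, and the claimed refinement is then the $(\underline{1}, \delta\ue/\epsilon)$-HN filtration of $\cxF$ for every $0 < \epsilon < \epsilon_0$. The main obstacle, and the only step that uses the smallness of $\epsilon$, is precisely this uniform choice of $\epsilon_0$ controlling the leading coefficient of a bounded $\epsilon$-independent polynomial against the $1/\epsilon$-scaled coefficient of $\delta$; within-layer comparisons and semistability of the refined pieces both hold for every $\epsilon > 0$, so the entire difficulty is concentrated at the finitely many cross-layer transitions.
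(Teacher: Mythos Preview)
Your proposal is correct and follows essentially the same route as the paper's own proof: build the refined filtration, invoke Remark~\ref{rmk on sigma0} for semistability of the successive quotients, observe that the within-layer inequalities are exactly the sheaf HN inequalities and hold for all $\epsilon$, and then reduce to the finitely many cross-layer comparisons where the positive $\delta/\epsilon$-contribution from the strictly increasing $\eta_i$ forces the correct sign in the second-to-top coefficient for all small $\epsilon$. The paper makes the bound on $\epsilon_0$ explicit in terms of the slopes $\mu(\cH^i(\cxF)_{s_i})$, $\mu(\im d^i_1)$, etc., but the content is the same as your dominance argument.
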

\begin{proof} Firstly, we note that if $\dim X = 0$ every sheaf is Gieseker semistable (all sheaves have the same reduced Hilbert polynomial) and so any choice of $\epsilon_0$ will work. Therefore, we assume $d = \dim X >0$. Let $\cH^i(\cxF)_j$ for $1 \leq j \leq s_i$ (resp. $\im  d^i_j$ for $1 \leq j \leq t_i$) denote the successive quotient sheaves in the Harder--Narasimhan filtration of $\cH^i(\cxF)$ (resp. $\im  d^i$). The successive quotients in this filtration are either shifts of $\cH^i(\cxF)_j$ or isomorphic to shifts of the cone on the identity morphism of $\im  d^i_j$ and so by Remark \ref{rmk on sigma0} these successive quotients are  $(\underline{1}, \delta \ue/\epsilon)$-semistable for any rational $\epsilon >0$. Thus it suffices to show there is an $\epsilon_0$ such that for all $0 <\epsilon <\epsilon_0$ we have inequalities
\[  \frac{P(\cH^{m_1}(\cxF)_1)}{\rk \cH^{m_1}(\cxF)_1} - \delta \frac{\eta_{m_1}}{\epsilon} >  \dots >  \frac{P(\cH^{m_1}(\cxF)_{s_{m_1}})}{\rk \cH^{m_1}(\cxF)_{s_{m_1}}} - \delta \frac{\eta_{m_1}}{\epsilon} >  \frac{P(\im d^{m_1}_1)}{\rk \im d_{m_1}^1} - \delta \frac{ \eta_{m_1} + \eta_{m_1 +1}}{2\epsilon} > \cdots \]
\[> \frac{P(\im d^{m_1}_{t_{m_1}})}{\rk \im d_{t_{m_1}}^{m_1}} - \delta \frac{ \eta_{m_1} + \eta_{m_1 +1}}{2\epsilon} >  \frac{P(\cH^{m_1}(\cxF)_1)}{\rk \cH^{m_1 +1}(\cxF)_1} - \delta \frac{\eta_{m_1 +1}}{\epsilon} > \cdots  > \frac{P(\cH^{m_2}(\cxF)_{s_{m_2}})}{\rk \cH^{m_2}(\cxF)_{s_{m_2}}} - \delta \frac{\eta_{m_2}}{\epsilon}. \]
Since we know that the reduced Hilbert polynomials of the successive quotients in the Harder--Narasimhan filtrations of the cohomology and image sheaves are decreasing, it suffices to show for $m_1 \leq i < m_2-1$ that:
\begin{equation*}\label{defining eps0} \begin{split} 
1) \: \: & \epsilon \frac{P(\cH^i(\cxF)_{s_i})}{\rk \cH^i(\cxF)_{s_i}} - \delta \eta_i > \epsilon \frac{P(\im d^i_1)}{\rk \im d^i_1} - \delta \frac{ \eta_i + \eta_{i+1}}{2} \: \: \mathrm{and}\\
2) \: \: & \epsilon \frac{P(\im d^i_{t_i})}{\rk \im d^i_{t_i}} - \delta \frac{ \eta_{i} + \eta_{i+1}}{2} >  \epsilon \frac{P(\cH^{i+1}(\cxF)_1)}{\rk \cH^{i+1}(\cxF)_1} - \delta \eta_{i+1}.
\end{split}
\end{equation*}
These polynomials all have the same top coefficient and we claim we can pick $\epsilon_0$ so if $0 < \epsilon < \epsilon_0$ we have strict inequalities in the second to top coefficients. Let $\mu(\mathcal{A})$ denote the second to top coefficient of the reduced Hilbert polynomial of $\mathcal{A}$ which is (up to multiplication by a positive constant) the slope of $\mathcal{A}$ and let $\delta^{\mathrm{top}} >0$ be the coefficient of $x^{d-1}$ in $\delta$. For $m_1 \leq i < m_2 -1$ let
\[ M_i := \max\left\{\mu(\im d^i_{1}) - \mu(\mathcal{H}^i(\cxF)_{s_i}), \mu(\mathcal{H}^{i+1}(\cxF)_{1}) - \mu(\im d^i_{t_i}) \right\}. \]
We pick $\epsilon_0 >0$ so that if $M_i >0$ then $ \epsilon_0 < \delta^{\mathrm{top}} ({\eta_{i+1} - \eta_i})/{2M_i} $.
\end{proof}

\section{The stratification of the parameter space}\label{sec on strat}


In the introduction we described a stratification of a projective $G$-scheme $B$ (with respect to an ample linearisation $\mathcal{L}$) by $G$-invariant subvarieties  $\{S_\beta  : \beta \in \mathcal{B} \}$ which is described in \cite{hesselink,kempf_ness,kirwan}. If we fix a compact maximal torus $T$ of $G$ and positive Weyl chamber $\mathfrak{t}_+$ in the Lie algebra $\mathfrak{t}$ of $T$, then the indices $\beta$ can be viewed as rational weights in $\mathfrak{t}_+$. 
Associated to $\beta $ there is a parabolic subgroup $P_\beta \subset G$, a rational 1-PS $\lambda_\beta : \CC^* \ra T_{\CC}$ and a rational character $\chi_\beta : T_{\CC} \ra \CC^*$ which extends to a character of $P_\beta$.
By definition $Z_\beta$ is the components of the fixed point locus of $\lambda_\beta$ acting on $B$ on which $\lambda_\beta$ acts with weight $|| \beta ||^2$ and $Z_\beta^{ss}$ is the GIT semistable subscheme for the action of the reductive part $\Sb$ of $P_\beta$ on $Z_\beta$ with respect to the linearisation $\mathcal{L}^{\chi_{-\beta}}$. Then $Y_\beta$ (resp. $Y_\beta^{ss}$) is defined to be the subscheme of $B$ consisting of points whose limit under $\lambda_\beta(t)$ as $t \to 0$ lies in $Z_\beta$ (resp. $Z_\beta^{ss}$). 
By \cite{kirwan} for $\beta \neq 0$ we have $S_\beta = G Y_\beta^{ss} \cong G \times^{P_\beta} Y_\beta^{ss}$.

In this section we study the stratifications of the parameter space $\fT_P(n)$ for complexes associated to the collection of stability conditions $(\underline{1}, \delta \ue/\epsilon)$ given in $\S$\ref{sec on stab}. We relate these stratifications to the natural stratification by Harder--Narasimhan types (see Theorem \ref{HN strat is strat} below).

\subsection{GIT set up}\label{GIT set up}

We consider the collection of stability parameters $(\underline{1}, \delta \ue/\epsilon)$ indexed by a small positive rational parameter $\epsilon$ where $\delta$ is a positive rational polynomial and $\ue$ are strictly increasing rational numbers. In $\S$\ref{sec on stab} we studied semistability and Harder--Narasimhan filtrations with respect to these parameters when $\epsilon$ is very small. The Harder--Narasimhan filtration of a complex with torsion free cohomology sheaves with respect to $(\underline{1}, \delta \ue/\epsilon)$ tells us about the Harder--Narasimhan filtration of the cohomology sheaves of this complex provided $\epsilon >0$ is chosen sufficient small (see Theorem \ref{HN filtrations for epsiloneta}). 

Recall that the parameter space $\fT = \fT_{P}(n)$ for $(\underline{1},\delta \ue /\epsilon)$-semistable complexes with invariants $P$ is a locally closed subscheme of a projective bundle $\fD$ over a product $Q=Q^{m_1} \times \cdots \times Q^{m_2}$ of open subschemes $Q^i$ of quot schemes. There is an action of a reductive group $G$ on $\fT$ where
\[ G = \SL(\oplus_i V^i) \cap \Pi_i \GL(V^i) \]
and $V^i$ are fixed vector spaces of dimension $P^i(n)$. The linearisation of this action is determined by the stability parameters (see $\S$\ref{linearisation schmitt} or \cite{schmitt05} for details). We also described a natural projective completion $\overline{\fT}$ of $\fT$ which is a closed subscheme of a projective bundle $\overline{\fD}$ over a projective scheme $\overline{Q}$ in $\S$\ref{calc HM for cx}. Note that the group $G$ and parameter scheme $\fT$ depend on the choice of a sufficiently large integer $n$. For any $n >\!>0$ and $\epsilon >0$, associated to this action we have a stratification of $\overline{\fT}$ into $G$-invariant locally closed subschemes such that the open stratum is the GIT semistable subscheme. As we are primarily interested in complexes with torsion free cohomology sheaves, which form an open subscheme ${\fT}^{tf}$ of the parameter space $\fT$, we look at restriction this stratification to the closure $\overline{\fT}^{tf}$ of ${\fT}^{tf}$ in $\overline{\fT}$:
\begin{equation}\label{snail} \overline{ \fT}^{tf} = \bigsqcup_{\beta \in \mathcal{B}} S_\beta. \end{equation}
Every point in $\fT^{tf}$ represents a complex which has a unique Harder--Narasimhan type with respect to $(\underline{1}, \delta \ue/\epsilon)$ and so we can write
\[ \fT^{tf} = \bigsqcup_{\tau} R_\tau \]
where the union is over all Harder--Narasimhan types $\tau$.


Let us fix a complex $\cxF$ with torsion free cohomology sheaves and invariants $\uP$. We can assume we have picked $\epsilon$ sufficiently small as given by Theorem \ref{HN filtrations for epsiloneta}, so that the successive quotients appearing in its Harder--Narasimhan filtration with respect to $(\underline{1}, \delta \ue/\epsilon)$ are defined using the successive quotients in the Harder--Narasimhan filtrations of $\cH^i(\cxF)$ and $\im d^i$. Let $\tau$ be the Harder--Narasimhan type of $\cxF$ with respect to these parameters and we assume this is a nontrivial Harder--Narasimhan type (i.e. $\cxF$ is unstable with respect to $(\underline{1}, \delta \ue/\epsilon)$). 

Let $H_{i,j}$ (resp. $I_{i,j}$) denote the Hilbert polynomial of the $j$th successive quotient $\cH^i(\cxF)_j$ (resp. $\im d^i_j$) in the Harder--Narasimhan filtration of the sheaf $\cH^i(\cxF)$ (resp. $\im d^i$) for $1 \leq j \leq s_i$ (resp. $1 \leq j \leq t_i$). We also let ${H}_{i,j} =( H^{k}_{i,j})_{k \in \ZZ}$ and ${I}_{i,j} =(I^{k}_{i,j})_{k \in \ZZ}$ denote the collection of Hilbert polynomials given by 
\[ H^{k}_{i,j} = \left\{ \begin{array}{ll} H_{i,j} & \mathrm{if} \: k = i, \\ 0 & \mathrm{otherwise}, \end{array} \right. \quad \mathrm{and} \quad I^{k}_{i,j}= \left\{ \begin{array}{ll} I_{i,j} & \mathrm{if} \: k = i,i+1, \\ 0 & \mathrm{otherwise}. \end{array} \right.\]
Then the Harder--Narasimhan type  of $\cxF$ is given by
\[ \tau = ( {H}_{m_1,1}, \dots, {H}_{m_1,s_{m_1}}, {I}_{m_1,1}, \dots, {I}_{m_1,t_{m_1}}, {H}_{m_1+1,1}, \dots, {H}_{m_2,s_{m_2}} ) \]
which we will frequently abbreviate to $\tau = ({H}, {I})$ where ${H}=(H^{k}_{i,j})_{i,j,k \in \ZZ}$ and ${I}=(I^{k}_{i,j})_{i,j,k \in \ZZ}$.

\begin{ass}\label{ass on epsilon}
 We may also assume $\epsilon$ is sufficiently small so that the only $(\underline{1}, \delta \ue/\epsilon)$-semistable complexes with Hilbert polynomials ${I}_{i,j}$ are isomorphic to cones on the identity morphism of a torsion free semistable sheaf.
\end{ass}

\subsection{Boundedness}

We first give a general boundedness result for complexes of fixed Harder--Narasimhan type:

\begin{lemma}\label{cxs HN type bdd} 
The set of sheaves occurring in a complex of torsion free sheaves with Harder--Narasimhan type $({P_{1}}, \cdots ,{P_{s}})$ with respect to $(\underline{\sigma}, \underline{\chi})$ is bounded.
\end{lemma}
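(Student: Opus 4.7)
The plan is to reduce to the boundedness of $(\underline{\sigma},\underline{\chi})$-semistable complexes with fixed invariants (already used in $\S$\ref{param sch const} to construct $\fT$) and then propagate this through the Harder--Narasimhan filtration by an inductive extension argument.

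First I would recall that if $\cxE$ is a complex with Harder--Narasimhan type $(P_1,\dots,P_s)$, then by Definition \ref{HN filtr} there is a filtration $0 = \cxE_{(0)} \subsetneq \cxE_{(1)} \subsetneq \cdots \subsetneq \cxE_{(s)} = \cxE$ such that the successive quotient $\cxE_{(j)}/\cxE_{(j-1)}$ is a $(\us,\uc)$-semistable complex of torsion free sheaves with Hilbert polynomials $P_j = (P_j^i)_{i \in \ZZ}$. By the boundedness statement cited at the beginning of $\S$\ref{param sch const} (ultimately a consequence of \cite{simpson}, Theorem 1.1), for each fixed $j$ the sheaves $\cE^i_j$ occurring as the $i$th term of such semistable complexes form a bounded family. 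In particular, there is an integer $n_0$, independent of the particular complex $\cxE$, such that every $\cE^i_j$ is $n_0$-regular for all admissible $i,j$.

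Next I would recover boundedness of the entire filtered sheaf $\cE^i$ in each fixed position $i$ by induction on $j$ along the short exact sequences
\[ 0 \ra \cE^i_{(j-1)} \ra \cE^i_{(j)} \ra \cE^i_{j} \ra 0. \]
The standard regularity estimate for short exact sequences (see e.g.\ \cite{huybrechts}, Lemma 1.7.2) shows that the Castelnuovo--Mumford regularity of the middle term is bounded in terms of those of the outer terms, which are bounded uniformly by Step~1 and, inductively, by the previous step. Since the Hilbert polynomial $\sum_{j} P_j^i$ of $\cE^i$ is fixed and its regularity is uniformly bounded, the family of sheaves $\cE^i$ is bounded by Kleiman's criterion.

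The main (mild) subtlety will be ensuring the regularity bound propagates through the inductive extension step with a bound depending only on $\tau = (P_1,\dots,P_s)$ and not on the individual complex, but this is exactly the content of the regularity estimate for short exact sequences combined with the uniform bound obtained in Step~1. No separate argument is needed for the boundary maps $d^i$ since once the $\cE^i$ lie in a bounded family with fixed Hilbert polynomials, the spaces $\Hom(\cE^i,\cE^{i+1})$ also vary in a bounded family, so the boundary data is automatically controlled. This completes the plan.
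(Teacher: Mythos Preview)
Your argument is correct, but it is genuinely different from the paper's. You reduce to the already-asserted boundedness of $(\us,\uc)$-semistable complexes with fixed invariants (from $\S$\ref{param sch const}) and then climb up the Harder--Narasimhan filtration by the standard regularity estimate for extensions. The paper instead reproves boundedness from scratch via Simpson's criterion: for each subsheaf $\cG \subset \cE^i$ it builds the subcomplex $0 \to \cdots \to 0 \to \cG \to \cE^{i+1} \to \cdots \to \cE^{m_2}$, applies the inequality $P^{\mathrm{red}}_{\us,\uc}(\cxG) \leq P^{\mathrm{red}}_{\us,\uc}(\cxE_1)$ coming from the maximal destabilising subcomplex, and extracts a uniform upper bound on $\deg \cG / \rk \cG$.

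Your route is cleaner and more modular, since it treats the semistable case as a black box and avoids any explicit slope manipulation; the paper's route has the advantage of being self-contained (it does not rely on the unproved assertion in $\S$\ref{param sch const}) and in fact simultaneously supplies a proof of that assertion as the special case $s=1$. One small remark: your last paragraph about $\mathrm{Hom}(\cE^i,\cE^{i+1})$ is unnecessary for the statement as written, which only concerns boundedness of the sheaves $\cE^i$ themselves.
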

\begin{proof}
This follows from a result of Simpson (see \cite{simpson} Theorem 1.1) which states that a collection of torsion free sheaves on $X$ of fixed Hilbert polynomial is bounded if the slopes of their subsheaves are bounded above by a fixed constant. Recall that the slope of a sheaf is (up to multiplication by a positive constant) the second to top coefficient in its reduced Hilbert polynomial. Let $\cxE$ be a complex with this Harder--Narasimhan type; then for any subcomplex $\cxG$ of $\cxE$ we have
\[ P_{\underline{\sigma}, \underline{\chi}}^{\mathrm{red}}(\cxG) \leq P_{\underline{\sigma}, \underline{\chi}}^{\mathrm{red}}(\cxE_{1}) = \frac{\sum_i \sigma_i P^i_1 - \delta \sum_i \eta_i r^i_1}{\sum_i \sigma_i r^i_1}=:R\]
where $\cxE_1$ is the maximal destabilising subcomplex of $\cxE$ which has Hilbert polynomials specified by ${P}_1$. Suppose $\cG$ is a subsheaf of $\cE^i$ and consider the subcomplex
\[ \cxG : \quad 0 \to \cdots \to 0 \to \cG \to \mathcal{E}^{i+1} \to \cdots \to \mathcal{E}^{m_2}; \]
then we have an inequality of polynomials
\[ \frac{\sigma_i P(\cG)  - \delta \eta_i \rk \cG+\sum_{j >i} \sigma_j P^j_1 - \delta \sum_{j>i} \eta_j r^j_1}{\sigma_i  \rk \cG+ \sum_{j>i} \sigma_j r^j_1} \leq R. \]
The top coefficients agree and so we have an inequality
 \begin{equation*}  \frac{\deg \cG}{ \rk \cG}\leq   
 \frac{\delta^{\mathrm{top}}}{\sigma_i} \left(\eta_i + \frac{  \sum_{j>i} \eta_j r^j_1}{ \rk \cG}\right) - \frac{\sum_{j>i}\sigma_j d^j_1 }{\sigma_i \rk \cG}   + \left( 1 + \frac{\sum_{j>i}\sigma_j r^j_1}{\sigma_i \rk \cG}\right)  \left(\frac{ \sum_j \sigma_j d^j_1 - \delta^{\mathrm{top}} \sum_j \eta_j r^j_1}{\sum_j \sigma_j r^j_1} \right)  \end{equation*}
where $d^j_1$ is (up to multiplication by a positive constant) the second to top coefficient of $P^j_1$ and $\delta^\mathrm{top}$ is (up to multiplication by a positive constant) the leading coefficient of $\delta$. Since the rank of $\cG$ is bounded, it follows that the slope of subsheaves $\cG \subset \cE^i$ are bounded. 
\end{proof}

As the set of sheaves occurring in complexes of a given Harder--Narasimhan type are bounded, we can pick $n$ so that they are all $n$-regular and thus parametrised by $\fT(n)$.

\begin{cor}\label{how to pick n cx}
Let $({P}_{1}, \cdots ,{P}_{s})$ be a Harder--Narasimhan type with respect to $(\underline{\sigma}, \underline{\chi})$. Then we can choose $n$ sufficiently large so that for $1\leq i_{1} < \dots < i_k \leq s$ all the sheaves occurring in a complex of torsion free sheaves with Harder--Narasimhan type $({P_{i_1}}, \cdots ,{P_{i_k}})$ are $n$-regular.
\end{cor}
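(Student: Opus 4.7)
The plan is to reduce the statement to Lemma \ref{cxs HN type bdd} by observing that subsequences of a Harder--Narasimhan type are themselves Harder--Narasimhan types, and then invoke the fact that a finite union of bounded families of sheaves admits a uniform regularity bound.

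First I would check that for any subsequence $1 \leq i_1 < \cdots < i_k \leq s$ the tuple $({P}_{i_1}, \ldots, {P}_{i_k})$ really is a bona fide Harder--Narasimhan type with respect to $(\us,\uc)$: semistability of each factor is inherited from the original Harder--Narasimhan type, and the strict inequalities
\[ P^{\mathrm{red}}_{\us,\uc}({P}_{i_1}) > P^{\mathrm{red}}_{\us,\uc}({P}_{i_2}) > \cdots > P^{\mathrm{red}}_{\us,\uc}({P}_{i_k}) \]
are restrictions of the strict inequalities in the original type $({P}_1,\ldots,{P}_s)$. Therefore Lemma \ref{cxs HN type bdd} applies directly to produce a bounded family $\cS_{(i_1,\ldots,i_k)}$ consisting of every sheaf that appears at some position in some complex of torsion free sheaves with Harder--Narasimhan type $({P}_{i_1},\ldots,{P}_{i_k})$.

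Next, since the set of nonempty subsequences of $\{1,\ldots,s\}$ is finite, the union
\[ \cS := \bigcup_{1 \leq i_1 < \cdots < i_k \leq s} \cS_{(i_1, \ldots, i_k)} \]
is a finite union of bounded families and hence still bounded. By the standard fact (due to Kleiman, see \cite{huybrechts} Lemma 1.7.6) that every bounded family of coherent sheaves on $X$ admits a uniform regularity bound, there exists $n \gg 0$ such that every sheaf in $\cS$ is $n$-regular. Taking this $n$ proves the corollary, since any complex with Harder--Narasimhan type $({P}_{i_1},\ldots,{P}_{i_k})$ has all of its sheaves in $\cS$.

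There is no real obstacle here: the only substantive input is Lemma \ref{cxs HN type bdd}, and the remainder is the finiteness of the index set combined with the well-known passage from boundedness to uniform $n$-regularity. One small point to keep straight is that a subsequence need not start at $i_1 = 1$ or end at $i_k = s$, so one genuinely needs to apply Lemma \ref{cxs HN type bdd} separately to each subsequence rather than to the full type alone; this is harmless since the number of subsequences is bounded by $2^s$.
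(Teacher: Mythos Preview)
Your argument is correct and is exactly the route the paper intends: the corollary is stated immediately after Lemma \ref{cxs HN type bdd} with no separate proof, and your expansion---apply the lemma to each of the finitely many subsequences and then pass from boundedness to a uniform $n$-regularity---is the evident reading. One minor remark: the phrase ``semistability of each factor is inherited'' is slightly loose, since a Harder--Narasimhan type is just a tuple of Hilbert polynomials and Lemma \ref{cxs HN type bdd} only needs the strict decrease of reduced Hilbert polynomials (which you correctly note is preserved under passing to a subsequence); the lemma then applies verbatim, and if no complex realises a given subsequence the corresponding family is empty and the bound is vacuous.
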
 

\begin{ass}\label{assum on n}
Let $\tau=({H}, {I})$ be the Harder--Narasimhan type of the complex $\cxF$ we fixed in $\S$\ref{GIT set up}. We assume $n$ is sufficiently large so that the statement of Corollary \ref{how to pick n cx} holds for this Harder--Narasimhan type. In particular this means every complex $\cxE$ with Harder--Narasimhan type $\tau$ is parametrised by $\fT^{tf}$.
\end{ass}

\subsection{The associated index}

In this section we associate to the complex $\cxF$ with Harder--Narasimhan type $\tau$ a rational weight $\beta(\tau,n)$ which we will show later on is an index for an unstable stratum in the stratification 
defined at (\ref{snail}) when $n$ is sufficiently large.

Let $z=(q,[\psi : 1]) $ be a point in $\mathfrak{T}^{tf}$ which parametrises the fixed complex $\cxF$ with Harder--Narasimhan type $\tau$.
As mentioned in the introduction, the stratification can also be described by using Kempf's notion of adapted 1-PSs and so rather than searching for a rational weight $\beta$ we look for a (rational) 1-PS $\lambda_{\beta}$ which is adapted to $z$. By definition, a 1-PS $\lambda$ is adapted to $z$ if it minimises the (normalised) Hilbert--Mumford function 
\[ \mu^{\cL} (z, \lambda) = \min_{\lambda'} \frac{ \mu^{\cL}(z, \lambda')}{|| \lambda'||}\]
 and is therefore most responsible for the instability of $z$. It is natural to expect that $\lambda$ should induce a filtration of $\cxF$ which is most responsible for the instability of this complex; that is, its Harder--Narasimhan filtration. To distinguish between the cohomology and image parts of this Harder--Narasimhan filtration we write the Harder--Narasimhan filtration of $\cxF$ as
\[ 0 \subsetneq \mathcal{A}^\cdot_{m_1,(1)} \subsetneq \cdots \subsetneq \mathcal{A}^\cdot_{m_1,(s_{m_1})} \subsetneq \mathcal{B}^\cdot_{m_1,(1)} \cdots \mathcal{B}^\cdot_{m_1,(t_{m_1})} \subsetneq \mathcal{A}^\cdot_{m_1 +1,(1)} \subsetneq \cdots \subsetneq \mathcal{A}^\cdot_{m_2,(s_{m_2})}=\cxF \]
where the quotient $\cA^\cdot_{k,j}$ (resp. $\cB^\cdot_{k,j}$) of $\cA^{\cdot}_{k,(j)}$ (resp. $\cB^\cdot_{k,(j)}$) by its predecessor is isomorphic to $\cH^k(\cxF)_j[-k]$ (resp. $\mathrm{Cone}(\mathrm{id}_{\im d^k_j})[-(k+1)]$).
Such a filtration induces filtrations of the vector space $V^i$ for $m_1 \leq i \leq m_2$:
\begin{equation}\label{filtr of Vi} 0 \subset V^i_{m_1,(1)} \subset \cdots \subset V^i_{m_1,(s_{m_1})} \subset W^i_{m_1,(1)} \subset \cdots \subset W^i_{m_1,(t_{m_1})} \subset  \dots \subset V^i_{m_2,(s_{m_2})} = V^i \end{equation}
where \[V^i_{k,(j)} := H^0(q^i(n))^{-1}H^0(\mathcal{A}^i_{k,(j)}(n)) \quad \mathrm{and} \quad W^i_{k,(j)} := H^0(q^i(n))^{-1}H^0(\mathcal{B}^i_{k,(j)}(n)).\] Let $V^i_{k,j}$ (respectively $W^i_{k,j}$) denote the quotient of $V^i_{k,(j)}$ (respectively $W^i_{k,(j)}$) by its predecessor in this filtration. Note that by the construction of the Harder--Narasimhan filtration (see Theorem \ref{HN filtrations for epsiloneta}) we have that $V^i_{k,j} =0$ unless $k=i$ and $W^i_{k,j} =0$ unless $k = i,i-1$ and we also have an isomorphism $W^i_{i,j} \cong W^{i+1}_{i,j}$.

Given integers $a_{k,j}$ for $m_1\leq k \leq m_2$ and $1 \leq j \leq  s_k$ and integers $b_{k,j}$ for $m_1\leq k < m_2 -1$ and $1 \leq j \leq  t_k$ which satisfy
\begin{equation}\begin{split}\label{decr weights} i) & \quad a_{m_,1} > \dots > a_{m_1,s_{m_1}} > b_{m_1,1} > \dots > b_{m_1,t_{m_1}} > a_{m_1 +1,1} > \dots > a_{m_2, s_{m_2}} \\ ii) & \quad \sum_{i=m_1}^{m_2} \sum_{j=1}^{s_i} a_{i,j} \dim V^i_{i,j} + 2 \sum_{i=m_1}^{m_2-1} \sum_{j=1}^{t_i} b_{i,j} \dim W^i_{i,j} =0,  \end{split}\end{equation}
we can define a 1-PS $\lambda(\underline{a}, \underline{b})$ of $G$ as follows. Let  $V^i_k = \oplus_{j=1}^{s_k} V^i_{k,j}$ and $W^i_k = \oplus_{j=1}^{t_k} W^i_{k,j}$; then define 1-PSs $\lambda_{k}^{H,i} : \mathbb{C}^* \rightarrow \mathrm{GL}(V^i_k)$ and $\lambda_{k}^{I,i}  : \mathbb{C}^* \rightarrow \mathrm{GL}(W^i_k)$ by 
\[ \lambda^{H,i}_k (t)= \left( \begin{array}{ccc} t^{a_{k,1}}I_{V^i_{k,1}} & & \\  & \ddots & \\  & & t^{a_{k,s_k}}I_{V^i_{k,s_k}}  \end{array} \right) \quad  \lambda_{k}^{I,i} (t)= \left( \begin{array}{ccc} t^{b_{k,1}}I_{W^i_{k,1}} & & \\  & \ddots & \\  & & t^{b_{k,t_k}}I_{W^i_{k,t_k}}  \end{array} \right) \]
Then  $\lambda(\underline{a}, \underline{b}):= (\lambda_{m_1}, \dots, \lambda_{m_2})$ is given by
\begin{equation}\label{1ps def} \lambda_i(t): = \left( \begin{array}{ccc} \lambda_{i-1}^{I,i}(t)  & &  \\ & \lambda^{H,i}_i(t) & \\ & & \lambda^{I,i}_i(t)  \end{array} \right) \in \GL(V^i)=\GL(W^i_{i-1} \oplus V^i_i \oplus W^i_i). \end{equation}

For all pairs $(\underline{a}, \underline{b})$ the associated 1-PS $\lambda(\underline{a}, \underline{b})$ of $G$ induces the Harder--Narasimhan filtration of $\cxF$ and so by Proposition \ref{HM prop} 
\begin{equation*}\begin{split} {\mu^{\mathcal{L}}(z, \lambda(\underline{a}, \underline{b}) )} = & \sum_{i=m_1}^{m_2}  \sum_{j=1}^{s_i} a_{i,j} \left( \frac{P_{\underline{1}}(n)}{r_{\underline{1}}\delta(n)} + \frac{{\eta}'_i}{\epsilon} \right)  \rk \mathcal{H}^i(\mathcal{F}_\cdot)_j   \\  & + \sum_{i=m_1}^{m_2-1}  \sum_{j=1}^{t_i} b_{i,j} \left( 2\frac{P_{\underline{1}}(n)}{r_{\underline{1}}\delta(n)} + \frac{{\eta}_i' + {\eta}'_{i+1}}{\epsilon} \right)  \rk \im d^i_j  \end{split}\end{equation*}
where $P_{\underline{1}} = \sum_i P^i$ and $r_{\underline{1}} = \sum_i r^i$ and $(\underline{1}, \delta \ue'/\epsilon)$ are the stability parameters associated to $(\underline{1}, \delta \ue/\epsilon)$ which satisfy $\sum_i \eta_i' r^i = 0$ (cf. Remark \ref{normalise}).

We define
\[ a_{i,j} := \frac{1}{\delta(n)} -\left( \frac{P_{\underline{1}}(n)}{r_{\underline{1}} \delta(n)} + \frac{{\eta}'_i}{\epsilon} \right) \frac{\rk(H_{i,j})}{H_{i,j}(n)} \quad \mathrm{and} \quad  b_{i,j} := \frac{1}{\delta(n)} -\left( \frac{P_{\underline{1}}(n)}{r_{\underline{1}} \delta(n)} + \frac{{\eta}'_i+{\eta}'_{i+1}}{2\epsilon} \right) \frac{\rk(I_{i,j})}{I_{i,j}(n)} \]
where $\rk(H_{i,j})$ and $\rk(I_{i,j})$ are the ranks determined by the leading coefficients of the polynomials $H_{i,j}$ and $I_{i,j}$.
The rational numbers $(\underline{a}, \underline{b})$ defined above are those which minimise the normalised Hilbert--Mumford function subject to condition $ii$) of (\ref{decr weights}) where $z \in \mathfrak{T}$ is the point which represents the complex $\cxF$ with Harder--Narasimhan type $\tau$. The choice of $\epsilon$ given by Theorem \ref{HN filtrations for epsiloneta} ensures that $(\underline{a}, \underline{b})$ also satisfy the inequalities $i$) of (\ref{decr weights}) for all sufficiently large $n$.

We choose a maximal torus $T_i$ of the maximal compact subgroup $\mathrm{U}(V^i)$ of $\mathrm{GL}(V^i)$ as follows. Take a basis of $V^i$ which is compatible with the filtration of $V^i$ defined at (\ref{filtr of Vi}) and define $T_i$ to be the maximal torus of $\mathrm{U}(V^i)$ given by taking diagonal matrices with respect to this basis. We 
pick the positive Weyl chamber 
\[ \mathfrak{t_i}_+ := \{ i \mathrm{diag}(a_1, \dots, a_{\dim V^i})  \in \mathfrak{t_i}: a_1 \geq \dots \geq a_{\dim V^i} \} .\]
Let $T$ be the maximal torus of the maximal compact subgroup of $G$ determined by the maximal tori $T_i$ and let $\mathfrak{t}_+$ be the positive Weyl chamber associated to the $\mathfrak{t_i}_+$.

\begin{defn}\label{defn of beta cxs}
We define $\beta = \beta(\tau,n) \in \mathfrak{t}_+$ to be the point defined by the rational weights
\[\beta_i = i \mathrm{diag} (b_{i-1,1}, \dots, b_{i-1,t_{i-1}}, a_{i,1} \dots, a_{i,s_i}, b_{i,1} \dots b_{i,t_i}) \in \mathfrak{t_i}_+\] where $a_{i,j}$ appears $H_{i,j}(n)$ times and $b_{k,j}$ appears $I_{k,j}(n)$ times. This rational weight defines a rational 1-PS $ \lambda_{\beta}$ of $G$ by $\lambda_{\beta} = \lambda(\underline{a}, \underline{b})$.
\end{defn}

\subsection{Describing components of $Z_\beta$}


By Remark \ref{rmk on fixed pts}, the $\lambda_{\beta}(\CC^*)$-fixed point locus of $\overline{\mathfrak{T}}^{tf}$ decomposes into three pieces: a diagonal piece, a strictly upper triangular piece and a strictly lower triangular piece. Each of these pieces decomposes further in terms of the Hilbert polynomials of the direct summands of each sheaf in this complex. We are interested in the component(s) of the diagonal part which may contain the graded object 
associated to the Harder--Narasimhan filtration of a complex $\cxE$ of Harder--Narasimhan type $\tau$.

We consider the closed subscheme $F_\tau$ of $\overline{\mathfrak{T}}^{tf}$ consisting of $z = (q, [\psi : \zeta])$ where as usual $\psi$ is determined by boundary maps $d^i$ and we have decompositions
\[ q^i = \bigoplus_{j=1}^{t_{i-1}} p^i_{i-1,j} \oplus \bigoplus_{j=1}^{s_i} q^i_{i,j} \oplus \bigoplus_{j=1}^{t_i} p^i_{i,j} \quad \mathrm{and} \quad d^i = \oplus_{j=1}^{t_i}d^i_j\]
where $q^i_{i,j} : V^i_{i,j} \otimes \mathcal{O}_X(-n) \rightarrow \mathcal{E}^i_{i,j}$ is a point in $\mathrm{Quot}(V^i_{i,j} \otimes \mathcal{O}_X(-n), H^i_{i,j})$ and $p^i_{k,j}: W^i_{k,j} \otimes \mathcal{O}_X(-n) \rightarrow \mathcal{G}^i_{k,j}$ is a point in $\mathrm{Quot}(W^i_{k,j} \otimes \mathcal{O}_X(-n), I^i_{k,j})$ and $d^i_j : \mathcal{G}^i_{i,j} \rightarrow \mathcal{G}_{i,j}^{i+1}$.

Following the discussion above we have:


\begin{lemma}\label{fixed pt locus}
$F_\tau$ is a union of connected components of the fixed point locus of $\lambda_{\beta}$ acting on $\overline{\mathfrak{T}}^{tf}$ which is contained completely in the diagonal part of this fixed point locus.
\end{lemma}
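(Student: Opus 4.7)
The plan is to verify that every point of $F_\tau$ is fixed by $\lambda_\beta$, lies in the diagonal piece described in Remark \ref{rmk on fixed pts}, and that $F_\tau$ is simultaneously open and closed in the fixed locus.

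First I would analyse how $\lambda_\beta$ acts on the vector spaces $V^i$ using Definition \ref{defn of beta cxs} and the filtration (\ref{filtr of Vi}). Each $V^i$ decomposes into $\lambda_\beta$-weight subspaces: $W^i_{i-1,j}$ carries weight $b_{i-1,j}$, $V^i_{i,j}$ carries weight $a_{i,j}$, and $W^i_{i,j}$ carries weight $b_{i,j}$; crucially, the weight $b_{i,j}$ appears both on $W^i_{i,j} \subset V^i$ and on $W^{i+1}_{i,j} \subset V^{i+1}$. For a point $(q,[\psi:\zeta]) \in F_\tau$, each quotient $q^i$ is by definition a direct sum of quotients of these weight subspaces, so by \cite{huybrechts} Lemma 4.4.3 (cf. Lemma \ref{lemma on fixed pts}) $q^i$ is already $\lambda_\beta$-fixed. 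Since each boundary component $d^i_j : \mathcal{G}^i_{i,j} \rightarrow \mathcal{G}^{i+1}_{i,j}$ sends the weight-$b_{i,j}$ subspace of $V^i$ into the weight-$b_{i,j}$ subspace of $V^{i+1}$, the homomorphism $\psi$ is $\lambda_\beta$-equivariant and hence the entire point is fixed. Because $d^i$ strictly preserves weight, the point falls in the diagonal piece of Remark \ref{rmk on fixed pts}.

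Next I would argue that $F_\tau$ is a union of connected components of the fixed locus. Two discrete invariants are locally constant on each connected component: the collection of Hilbert polynomials of the direct summands of each $q^i$ (which exist because, by Lemma \ref{lemma on fixed pts}, any fixed quotient decomposes according to the weight spaces), and the diagonal / strictly upper / strictly lower triangular type from Remark \ref{rmk on fixed pts}. The subscheme $F_\tau$ is cut out precisely by requiring these invariants to take the prescribed values $H^i_{i,j}$, $I^i_{k,j}$, and diagonal. Note that once the diagonal condition is imposed, $d^i$ is automatically of the form $\oplus_j d^i_j$ with $d^i_j : \mathcal{G}^i_{i,j} \rightarrow \mathcal{G}^{i+1}_{i,j}$, since $b_{i,j}$ is the unique $\lambda_\beta$-weight appearing in both $V^i$ and $V^{i+1}$, so the image and cohomology summands interact correctly. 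This shows $F_\tau$ is both open and closed in the fixed locus.

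The main obstacle is bookkeeping rather than substance: one must carefully match the ordering (\ref{decr weights}) of the $a_{i,j}$ and $b_{k,j}$ with the filtration (\ref{filtr of Vi}) of each $V^i$, verify that $b_{i,j}$ is the only $\lambda_\beta$-weight shared by $V^i$ and $V^{i+1}$, and track how the summand $\mathcal{G}^i_{i,j}$ of $\mathcal{E}^i$ matches via $d^i_j$ with the summand $\mathcal{G}^{i+1}_{i,j}$ of $\mathcal{E}^{i+1}$. Once this identification is in place, both assertions reduce to straightforward applications of \cite{huybrechts} Lemma 4.4.3 and Remark \ref{rmk on fixed pts}.
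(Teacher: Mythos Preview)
Your proposal is correct and follows exactly the approach the paper intends. In fact the paper gives no explicit proof at all: the lemma is stated immediately after the sentence ``Following the discussion above we have:'', so the intended argument is precisely the combination of Remark~\ref{rmk on fixed pts} (the decomposition of the fixed locus into diagonal, strictly upper, and strictly lower triangular pieces, each further decomposing by the Hilbert polynomials of the summands) with the definition of $F_\tau$ that pins down those discrete invariants. Your write-up simply makes this explicit, including the observation that the $b_{i,j}$ are the only $\lambda_\beta$-weights shared between $V^i$ and $V^{i+1}$, which is what forces a diagonal $d^i$ to have the required block form $\oplus_j d^i_j$.
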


\begin{rmk}\label{descr of F and Ttau for cx}
Every point in $\mathfrak{T}_{(\tau)}:=F_\tau \cap \mathfrak{T}^{tf}$ is a direct sum of complexes with Hilbert polynomials specified by $\tau$ and hence we have an isomorphism
\[ \mathfrak{T}_{{H}_{m_1,1}} \times \cdots \times  \mathfrak{T}_{{H}_{m_1,s_{m_1}}} \times \mathfrak{T}^{tf}_{{I}_{m_1,1}} \times \cdots  \times \mathfrak{T}^{tf}_{{I}_{m_1,t_{m_1}}} \times \mathfrak{T}_{{H}_{m_1 +1,1}}\times \cdots \times   \mathfrak{T}_{{H}_{m_2,s_{m_2}}} \cong  \mathfrak{T}_{(\tau)}.\]
\end{rmk}

Let $Z_\beta$ and $Y_\beta$ denote the subschemes of the stratum $S_\beta$ as defined in the introduction. 


\begin{lemma}\label{descr of Zbeta}
Let $ z \in \mathfrak{T}_{(\tau)}:=F_\tau \cap \mathfrak{T}^{tf} $; then $z \in Z_\beta$.
\end{lemma}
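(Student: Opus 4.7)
The plan is to verify the two defining properties of $Z_\beta$: that $z$ is fixed by $\lambda_\beta$, and that $\lambda_\beta$ acts on the fibre of $\mathcal{L}$ over $z$ with weight $||\beta||^2$. The first property is immediate from Lemma \ref{fixed pt locus} since $z \in F_\tau$ and $F_\tau$ is contained in the fixed point locus of $\lambda_\beta$. For the second, I will use that the weight on the fibre of $\mathcal{L}$ over a $\lambda_\beta$-fixed point equals $-\mu^{\mathcal{L}}(z,\lambda_\beta)$ by the very definition of the Hilbert--Mumford function, so the statement reduces to the identity $\mu^{\mathcal{L}}(z,\lambda_\beta) = -||\beta||^2$.

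To compute $\mu^{\mathcal{L}}(z,\lambda_\beta)$, I will invoke Lemma \ref{HM prop}(i), which applies because by construction $\lambda_\beta$ induces the Harder--Narasimhan filtration of $\cxF$, a filtration by subcomplexes. Writing $c_i := \frac{P_{\underline{1}}(n)}{r_{\underline{1}}\delta(n)} + \frac{\eta'_i}{\epsilon}$ and $d_i := 2\frac{P_{\underline{1}}(n)}{r_{\underline{1}}\delta(n)} + \frac{\eta'_i+\eta'_{i+1}}{\epsilon}$, and taking into account that each $b_{i,j}$ contributes at both positions $i$ and $i+1$ (since $W^i_{i,j}\cong W^{i+1}_{i,j}$), the formula becomes
\[ \mu^{\mathcal{L}}(z,\lambda_\beta) = \sum_{i,j} a_{i,j}\, c_i \rk H_{i,j} + \sum_{i,j} b_{i,j}\, d_i \rk I_{i,j}. \]
In parallel, using the maximal torus $T$ chosen before Definition \ref{defn of beta cxs} and its invariant inner product, I will compute
\[ ||\beta||^2 = \sum_{i,j} a_{i,j}^2\, H_{i,j}(n) + 2\sum_{i,j} b_{i,j}^2\, I_{i,j}(n), \]
where the factor of $2$ arises because $b_{i,j}$ appears as a weight of $\lambda_\beta$ on both $V^i$ and $V^{i+1}$ with multiplicity $I_{i,j}(n)$.

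The verification of $\mu^{\mathcal{L}}(z,\lambda_\beta) = -||\beta||^2$ then proceeds by substituting the explicit formulas
\[ a_{i,j} = \frac{1}{\delta(n)} - c_i\frac{\rk H_{i,j}}{H_{i,j}(n)}, \qquad b_{i,j} = \frac{1}{\delta(n)} - \frac{d_i}{2}\frac{\rk I_{i,j}}{I_{i,j}(n)} \]
into both expressions and collecting terms. The bookkeeping is organised around three identities: the dimension identity $\sum_{i,j}H_{i,j}(n)+2\sum_{i,j}I_{i,j}(n)=P_{\underline{1}}(n)$, the rank identity $\sum_{i,j}\rk H_{i,j}+2\sum_{i,j}\rk I_{i,j}=r_{\underline{1}}$, and the normalisation $\sum_i \eta'_i r^i = 0$, which together collapse the expression $\sum_i c_i\rk\mathcal{H}^i(\cxF) + \sum_i d_i\rk(\im d^i)$ to $P_{\underline{1}}(n)/\delta(n)$. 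The main obstacle is purely the bookkeeping in this computation: keeping track of the doubled image contributions and verifying that the cross terms in $c_i/\delta(n)$ and $d_i/\delta(n)$ cancel to leave only the matching quadratic terms $c_i^2(\rk H_{i,j})^2/H_{i,j}(n)$ and $\tfrac{1}{2}d_i^2(\rk I_{i,j})^2/I_{i,j}(n)$. Conceptually, the equality is forced to hold because $(\underline{a},\underline{b})$ were defined precisely as the minimisers of the Hilbert--Mumford function subject to the trace-zero constraint $ii)$ of (\ref{decr weights}); this is Kempf's characterisation of adapted one-parameter subgroups, and at the optimum the identity $\mu^{\mathcal{L}}(z,\lambda_\beta)=-||\beta||^2$ is automatic. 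As a small sanity check along the way, I will confirm that condition $ii)$ of (\ref{decr weights}) indeed holds for the given $a_{i,j}, b_{i,j}$ using the same normalisation $\sum_i\eta'_i r^i=0$, which guarantees $\lambda_\beta$ takes values in $G=G_{\underline{1}}$.
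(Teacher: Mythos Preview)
Your approach is the same as the paper's: both reduce to the identity $-\mu^{\mathcal{L}}(z,\lambda_\beta)=||\beta||^2$ via Lemma~\ref{HM prop}(i), and the paper simply asserts that the choice of $(\underline{a},\underline{b})$ makes this hold. One small remark: your verification can be shortened considerably. Writing $c_i\rk H_{i,j}=H_{i,j}(n)/\delta(n)-a_{i,j}H_{i,j}(n)$ and $d_i\rk I_{i,j}=2I_{i,j}(n)/\delta(n)-2b_{i,j}I_{i,j}(n)$ directly from the definitions of $a_{i,j},b_{i,j}$, one gets
\[
||\beta||^2+\mu^{\mathcal{L}}(z,\lambda_\beta)=\frac{1}{\delta(n)}\Bigl(\sum_{i,j}a_{i,j}H_{i,j}(n)+2\sum_{i,j}b_{i,j}I_{i,j}(n)\Bigr),
\]
which vanishes precisely by condition~$ii)$ of (\ref{decr weights}). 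So what you flag as a ``sanity check'' is in fact the entire computation; the three auxiliary identities you list are only needed to establish $ii)$ itself. Also, a minor wording point: for a general $z\in\mathfrak{T}_{(\tau)}$ the complex is already the graded object, so $\lambda_\beta$ does not induce the Harder--Narasimhan filtration of $\cxF$ but rather the split filtration given by the direct-sum decomposition of $z$; Lemma~\ref{HM prop}(i) still applies for the same reason.
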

\begin{proof}
Let $z = ( q, [\psi: 1]) $ be a point of $\mathfrak{T}_{(\tau)}$ as above. The weight of the action of $\lambda_{\beta}$ on $z$ is given equal to $-\mu^{\mathcal{L}}(z, \lambda_\beta )$ and by Proposition \ref{HM prop} we have
\begin{equation*} {\mu^{\mathcal{L}}(z, \lambda_\beta )} =  \sum_{i=m_1}^{m_2}  \sum_{j=1}^{s_i} a_{i,j} \left( \frac{P_{\underline{1}}(n)}{r_{\underline{1}}\delta(n)} + \frac{{\eta}'_i}{\epsilon} \right)  \rk \cE^i_{i,j}    + \sum_{i=m_1}^{m_2-1}  \sum_{j=1}^{t_i} b_{i,j} \left( 2\frac{P_{\underline{1}}(n)}{r_{\underline{1}}\delta(n)} + \frac{{\eta}'_i + {\eta}'_{i+1}}{\epsilon} \right)  \rk \cG^i_{i,j}.  \end{equation*}
By definition $Z_\beta$ is the union of the connected components of the fixed point locus for the action of $\lambda_{\beta}$ on which $\lambda_{\beta}$ acts with weight $|| \beta||^2$ and it is easy to check that the choice of rational numbers $(\underline{a}, \underline{b})$ ensures that $|| \beta ||^2=-\mu^{\mathcal{L}}(z, \lambda_\beta )$ which completes the proof.
\end{proof}

\begin{cor}
The scheme $ \mathfrak{T}_{(\tau)} $ is a union of connected components of $Z_\beta \cap \mathfrak{T}^{tf}$.
\end{cor}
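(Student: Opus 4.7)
The plan is to deduce the corollary essentially by combining Lemma \ref{fixed pt locus} and Lemma \ref{descr of Zbeta} with the observation that ``union of connected components'' behaves well under passing to an open subscheme. The content is almost entirely topological at this point.

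First I would recall that, by definition, $Z_\beta$ is a union of connected components of the fixed-point locus $\overline{\mathfrak{T}}^{tf, \lambda_\beta}$ of $\lambda_\beta$ acting on $\overline{\mathfrak{T}}^{tf}$, namely those components on which $\lambda_\beta$ acts with weight $\|\beta\|^2$. By Lemma \ref{fixed pt locus}, $F_\tau$ is also a union of connected components of $\overline{\mathfrak{T}}^{tf, \lambda_\beta}$, and by Lemma \ref{descr of Zbeta} we have the inclusion $F_\tau \cap \mathfrak{T}^{tf} \subset Z_\beta$. I would upgrade this to the claim $F_\tau \subset Z_\beta$: every connected component $C$ of $F_\tau$ meets $\mathfrak{T}^{tf}$ (since $\mathfrak{T}^{tf}$ is dense in $\overline{\mathfrak{T}}^{tf}$ and $C$ is open in $\overline{\mathfrak{T}}^{tf,\lambda_\beta}$, one checks each component of $F_\tau$ does hit the open part; more concretely the components of $F_\tau$ are precisely those described via the factorisation in Remark \ref{descr of F and Ttau for cx} and each contains points of $\mathfrak{T}^{tf}$), so $C \cap Z_\beta \neq \emptyset$, and since $C$ is connected and $Z_\beta$ is a union of components of the ambient fixed-point locus, $C \subset Z_\beta$.

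Consequently $F_\tau$ is a union of connected components of $Z_\beta$, hence it is both open and closed in $Z_\beta$. Intersecting with the open subscheme $\mathfrak{T}^{tf} \subset \overline{\mathfrak{T}}^{tf}$, the set $\mathfrak{T}_{(\tau)} = F_\tau \cap \mathfrak{T}^{tf}$ is then both open and closed in $Z_\beta \cap \mathfrak{T}^{tf}$. Any clopen subscheme of a (locally Noetherian) scheme is a union of connected components, which gives the corollary.

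The only subtle point I would watch for is the step where one lifts the inclusion $F_\tau \cap \mathfrak{T}^{tf} \subset Z_\beta$ to $F_\tau \subset Z_\beta$; this requires that each component of $F_\tau$ actually has a point in $\mathfrak{T}^{tf}$. This is not completely automatic, but it follows from the product description of $F_\tau$ in Remark \ref{descr of F and Ttau for cx}: in each factor one can choose a quotient lying in $\mathfrak{T}^{tf}$ (the open subschemes $\mathfrak{T}_{H_{i,j}}$ are nonempty because they are built from $n$-regular torsion free sheaves with Hilbert polynomial $H_{i,j}$, and similarly for the $\mathfrak{T}^{tf}_{I_{i,j}}$ with Assumption \ref{ass on epsilon} guaranteeing the cone description is available), so every component of $F_\tau$ meets $\mathfrak{T}^{tf}$. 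With this in hand the rest is formal and the corollary follows immediately.
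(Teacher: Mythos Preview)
Your core idea is correct and is precisely what the paper intends: the corollary is stated without proof because it follows immediately from Lemma~\ref{fixed pt locus} and Lemma~\ref{descr of Zbeta}. However, you take an unnecessary detour in trying to upgrade $F_\tau \cap \mathfrak{T}^{tf} \subset Z_\beta$ to $F_\tau \subset Z_\beta$, and the justification you give for that step is not quite solid.

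The direct argument avoids this entirely. Since $F_\tau$ is a union of connected components of the $\lambda_\beta$-fixed locus (Lemma~\ref{fixed pt locus}) and $Z_\beta$ is contained in that fixed locus, the intersection $F_\tau \cap Z_\beta$ is automatically open and closed in $Z_\beta$, regardless of whether $F_\tau \subset Z_\beta$. Lemma~\ref{descr of Zbeta} gives $\mathfrak{T}_{(\tau)} = F_\tau \cap \mathfrak{T}^{tf} \subset Z_\beta$, hence
\[
\mathfrak{T}_{(\tau)} \;=\; F_\tau \cap \mathfrak{T}^{tf} \;=\; (F_\tau \cap Z_\beta) \cap \mathfrak{T}^{tf},
\]
which is clopen in $Z_\beta \cap \mathfrak{T}^{tf}$. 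That finishes the corollary with no need to analyse components of $F_\tau$ outside $\mathfrak{T}^{tf}$.

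As for your ``subtle point'': the density of $\mathfrak{T}^{tf}$ in $\overline{\mathfrak{T}}^{tf}$ does not by itself force every component of the closed subscheme $F_\tau$ to meet $\mathfrak{T}^{tf}$ (a component of $F_\tau$ is open only in the fixed locus, not in $\overline{\mathfrak{T}}^{tf}$), and Remark~\ref{descr of F and Ttau for cx} describes $\mathfrak{T}_{(\tau)}$ rather than the components of $F_\tau$. So this step, while plausible, is not fully justified as written---and, as above, it is simply not needed.
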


Let $F$ be the union of connected components of $Z_\beta$ meeting $\mathfrak{T}_{(\tau)}$; then $F $ is a closed subscheme of $\overline{\mathfrak{T}}^{tf}$ which is completely contained in the diagonal part of $Z_{\beta}$. Consider the subgroup
\[ \Sb = \left( \prod_{i=m_1}^{m_2} \prod_{j=1}^{s_i} \mathrm{GL}(V^i_{i,j}) \times \prod_{i=m_1}^{m_2-1} \prod_{j=1}^{t_i} \mathrm{GL}(W^i_{i,j}) \times \mathrm{GL}(W^{i+1}_{i,j}) \right) \cap \mathrm{SL}(\oplus_{i=m_1}^{m_2} V^i) \]
of $G$ which is the stabiliser of $\beta$ under the adjoint action of $G$. 

\begin{lemma}\label{descr of Ghat}
$ \Sb$ has a central subgroup 
\[ \hat{G}= \left\{  (u_{i,j}, w_{i,j}) \in (\mathbb{C}^*)^{\sum_{i=m_1}^{m_2} s_i +\sum_{i=m_1}^{m_2-1} t_i} :  \prod_{i=m_1}^{m_2} \prod_{j=1}^{s_i} (u_{i,j})^{H_{i,j}(n)} \times \prod_{i=m_1}^{m_2-1} \prod_{j=1}^{t_i} (w_{i,j})^{2I_{i,j}(n)} =1  \right\} \]
which fixes every point of $F$. This subgroup acts on the fibre of $\mathcal{L}$ over any point of $F$ by a character ${\chi}_F : \hat{G} \rightarrow \mathbb{C}^* $ given by \[ {\chi}_F(u_{i,j},w_{i,j})=\prod_{i=m_1}^{m_2} \prod_{j=1}^{s_i} (u_{i,j})^{- \rk H_{i,j} \left(\frac{P_{\underline{1}}(n)}{r_{\underline{1}} \delta (n)} + \frac{\eta'_i}{\epsilon} \right) } \prod_{i=m_1}^{m_2-1} \prod_{j=1}^{t_i} (w_{i,j})^{- \rk I_{i,j} \left(\frac{2P_{\underline{1}}(n)}{r_{\underline{1}} \delta (n)} + \frac{\eta'_i + \eta'_{i+1}}{\epsilon} \right) }.\]
\end{lemma}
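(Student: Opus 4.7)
The plan is to verify the three claims---that $\hat G$ is a central subgroup of $\Sb$, that it fixes every point of $F$, and that it acts on $\cL\vert_z$ by the stated character---in turn. The only substantive input is Lemma \ref{HM prop} applied to well-chosen 1-PSs of $\hat G$; the rest is bookkeeping. The main potential pitfall is the conjugation-convention check in the fixed-point step, so I would do that first with some care.

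\textbf{Centrality.} An element $(u_{i,j},w_{i,j})\in\hat G$ acts as the scalar $w_{i-1,j}$ on $W^i_{i-1,j}$, as $u_{i,j}$ on $V^i_{i,j}$ and as $w_{i,j}$ on $W^i_{i,j}$, with respect to the decomposition $V^i=\bigoplus_j W^i_{i-1,j}\oplus\bigoplus_j V^i_{i,j}\oplus\bigoplus_j W^i_{i,j}$. By definition $\Sb$ consists of matrices which are block-diagonal with respect to this same decomposition, so the scalars of $\hat G$ commute with every element of $\Sb$. The determinant relation defining $\hat G$ is exactly the SL-condition needed for $\hat G\subset G=G_{\underline 1}$.

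\textbf{Fixed-point property.} Let $z=(q,[\psi:\zeta])\in F$, so that $q^i$ and $d^i$ decompose as direct sums indexed by the $H$- and $I$-blocks, as in the definition of $F_\tau$. For $g\in\hat G$, the action on $q$ precomposes each block with a scalar on its source vector space, which does not alter the corresponding point in the relevant Quot scheme. The action on $\psi$ amounts to conjugating $\tilde\psi$ (cf.\ \S\ref{gp act}); the only nonzero blocks of $\tilde\psi$ come from the boundary maps $W^i_{i,j}\to W^{i+1}_{i,j}$, and by construction $\hat G$ acts by the \emph{same} scalar $w_{i,j}$ on source and target, so conjugation multiplies each such block by $w_{i,j}\cdot w_{i,j}^{-1}=1$. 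The cohomology blocks carry no boundary and are untouched. Hence $g\cdot z=z$.

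\textbf{The character.} Parametrise an arbitrary 1-PS of $\hat G$ by setting $u_{i,j}=t^{\alpha_{i,j}}$ and $w_{i,j}=t^{\beta_{i,j}}$ subject to the SL constraint $\sum\alpha_{i,j}H_{i,j}(n)+\sum 2\beta_{i,j}I_{i,j}(n)=0$. Because such a 1-PS acts by scalars on the direct-sum decomposition of the complex at $z$, it induces a filtration of $z$ by subcomplexes, so Lemma \ref{HM prop}(i) applies with $\underline\sigma=\underline 1$ and $\chi_i=\delta\eta'_i/\epsilon$. Using $\rk\cE^i_{i,j}=\rk H_{i,j}$ and $\rk\cG^i_{i,j}=\rk\cG^{i+1}_{i,j}=\rk I_{i,j}$, that lemma produces a sum over the cohomology blocks of terms $\alpha_{i,j}\bigl(P_{\underline 1}(n)/(r_{\underline 1}\delta(n))+\eta'_i/\epsilon\bigr)\rk H_{i,j}$ together with a sum over the image blocks of terms $\beta_{i,j}\bigl(2P_{\underline 1}(n)/(r_{\underline 1}\delta(n))+(\eta'_i+\eta'_{i+1})/\epsilon\bigr)\rk I_{i,j}$, the second factor of $2$ coming from the two positions $i,i+1$ occupied by each image block. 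Since the weight of $\lambda$ on $\cL\vert_z$ is $-\mu^{\cL}(z,\lambda)$, exponentiating in $t$ and extending multiplicatively to the full torus $\hat G$ recovers the stated formula for $\chi_F$.
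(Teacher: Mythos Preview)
Your proof is correct and follows essentially the same approach as the paper. The paper's own argument gives the inclusion $(u_{i,j},w_{i,j})\mapsto(u_{i,j}I_{V^i_{i,j}},\,w_{i,j}I_{W^i_{i,j}},\,w_{i,j}I_{W^{i+1}_{i,j}})$, then verifies fixedness exactly as you do (scalars act trivially on each Quot factor, and the same scalar $w_{i,j}$ on source and target of $d^i_j$), and for the character simply instructs the reader to ``modify the calculations for $\mathbb{C}^*$-actions in \S\ref{calc HM for cx} to general torus actions''; your parametrisation by 1-PSs together with Lemma~\ref{HM prop}(i) is precisely that modification written out in full.
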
 
\begin{proof}
The inclusion of $\hat{G}$ into $ \Sb$ is given by 
\[(u_{i,j} \: , \: w_{i,j}) \mapsto (u_{i,j} I_{V^i_{i,j}} \: , \: w_{i,j} I_{W^i_{i,j}} \: , \: w_{i,j} I_{W^{i+1}_{i,j}}) .\] 
Let $z = (q, [ \psi : \zeta]) $ be a point of $F$; then we have a decomposition
\[ q^i = \bigoplus_{j=1}^{t_{i-1}} p^i_{i-1,j} \oplus \bigoplus_{j=1}^{s_i} q^i_{i,j} \oplus \bigoplus_{j=1}^{t_i} p^i_{i,j} . \]
A copy of $\mathbb{C}^*$ acts trivially on each quot scheme and so the central subgroup $\hat{G}$ fixes this quotient sheaf. The boundary maps are of the form $d^{i} = \oplus_{j=1}^{t_i} d^{i}_j$ where $d^{i}_j : \mathcal{G}^i_{i,j} \rightarrow \mathcal{G}^{i+1}_{i,j}$. As $ (u_{i,j} \: , \: w_{i,j}) \in \hat{G}$ acts on both $\mathcal{G}^i_{i,j}$ and $\mathcal{G}^{i+1}_{i,j}$ by multiplication by $w_{i,j}$, the boundary maps are also fixed by the action of $\hat{G}$. 

To calculate the character ${\chi}_F : \hat{G} \rightarrow \mathbb{C}^* $ with which this torus acts we fix $(u_{i,j} \: , \: w_{i,j}) \in \hat{G}$ and calculate the weight of the action of this element on the fibre over a point $z \in F$ by modifying the calculations for $\mathbb{C}^*$-actions in $\S$\ref{calc HM for cx} to general torus actions.
\end{proof}

Let $\cL^{\chi_{-\beta}}$ denote the linearisation of the $\Sb$ action on $Z_\beta$ given by twisting the original linearisation $\mathcal{L}$ by the character $\chi_{-\beta}$ associated to $-\beta$. Recall that $Z_\beta^{ss}$ is the GIT semistable set with respect to this linearisation. Consider the subgroup 
\begin{equation*} \begin{aligned}
G' &:= \prod_{i=m_1}^{m_2} \prod_{j=1}^{s_i} \SL(V^i_{i,j}) \times \prod_{i=m_1}^{m_2-1} \prod_{j=1}^{t_i} (\GL(W^i_{i,j}) \times \GL(W^{i+1}_{i,j}) )\cap \SL(W^i_{i,j} \oplus W^{i+1}_{i,j})
\\ &= \left\{ (g^i_j, h^i_{i,j}, h^{i+1}_{i,j} ) \in \Sb : \det g^i_j = 1 \: \mathrm{and} \: \det h^i_{i,j} \det h^{i+1}_{i,j} =1 \right\}. \end{aligned}\end{equation*}

\begin{prop}\label{descr of Fss}
Let $F$ be the components of $Z_\beta$ which meet $\mathfrak{T}_{(\tau)}$; then
 \[ F^{\Sb-ss}(\mathcal{L}^{\chi_{-\beta}}) = F^{G'-ss}(\mathcal{L}). \]
\end{prop}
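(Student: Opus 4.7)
The plan is to exploit the decomposition $\Sb = \hat{G} \cdot G'$ (with finite intersection) together with the fact that the twist by $\chi_{-\beta}$ is precisely calibrated so that the central subgroup $\hat{G}$ acts trivially on fibres of $\mathcal{L}^{\chi_{-\beta}}$ over $F$. Since $\hat{G}$ fixes $F$ pointwise, any Hilbert--Mumford calculation for the $\Sb$-action on $F$ will then split into a (trivial) $\hat{G}$-contribution and a $G'$-contribution, and the two semistable sets will coincide.

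First, I would verify the group-theoretic decomposition: given any $(g^i_j, h^i_{i,j}, h^{i+1}_{i,j}) \in \Sb$, one can divide each $g^i_j$ by a scalar $u_{i,j}$ with $u_{i,j}^{H_{i,j}(n)} = \det g^i_j$ and each pair $(h^i_{i,j}, h^{i+1}_{i,j})$ by a scalar $w_{i,j}$ with $w_{i,j}^{2I_{i,j}(n)} = \det h^i_{i,j} \det h^{i+1}_{i,j}$, which lands it in $G'$. The global $\SL(\oplus V^i)$-constraint on $\Sb$ forces $\prod u_{i,j}^{H_{i,j}(n)} \prod w_{i,j}^{2I_{i,j}(n)} = 1$, so $(u_{i,j},w_{i,j}) \in \hat{G}$, giving $\Sb = \hat{G} \cdot G'$ with finite intersection consisting of roots of unity.

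Second, I would do the key computation: $\chi_{-\beta}|_{\hat{G}} = \chi_F^{-1}$. The character $\chi_{-\beta}$ evaluated on $(u_{i,j} I, w_{i,j} I, w_{i,j} I)$ is $\prod u_{i,j}^{-a_{i,j} H_{i,j}(n)} \prod w_{i,j}^{-2 b_{i,j} I_{i,j}(n)}$. Substituting the definitions of $a_{i,j}$ and $b_{i,j}$ produces the exponents
\[ -\tfrac{H_{i,j}(n)}{\delta(n)} + \rk H_{i,j}\bigl(\tfrac{P_{\underline{1}}(n)}{r_{\underline{1}}\delta(n)} + \tfrac{\eta'_i}{\epsilon}\bigr), \qquad -\tfrac{2 I_{i,j}(n)}{\delta(n)} + \rk I_{i,j}\bigl(\tfrac{2P_{\underline{1}}(n)}{r_{\underline{1}}\delta(n)} + \tfrac{\eta'_i+\eta'_{i+1}}{\epsilon}\bigr). \]
The unwanted terms $-H_{i,j}(n)/\delta(n)$ and $-2I_{i,j}(n)/\delta(n)$ assemble into a multiple of the $\hat{G}$-defining relation and hence act trivially on $\hat{G}$. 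Matching the remaining exponents to the formula of Lemma \ref{descr of Ghat} gives exactly $\chi_F^{-1}$, so $\hat{G}$ acts trivially on the fibres of $\mathcal{L}^{\chi_{-\beta}}|_F$. This calibration is the heart of the statement and is where I would expect the main computational subtlety.

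Third, I would check that $\chi_{-\beta}|_{G'}$ is trivial, so that $\mathcal{L}^{\chi_{-\beta}}$ and $\mathcal{L}$ induce the same $G'$-linearisation: on each $\SL(V^i_{i,j})$ factor there is no nontrivial character, while on each $(\GL(W^i_{i,j}) \times \GL(W^{i+1}_{i,j})) \cap \SL(W^i_{i,j}\oplus W^{i+1}_{i,j})$ the weights of $\beta$ on $W^i_{i,j}$ and $W^{i+1}_{i,j}$ are both equal to $b_{i,j}$, so $\chi_\beta$ evaluates as $(\det h^i_{i,j} \det h^{i+1}_{i,j})^{b_{i,j}} = 1$.

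Finally, I would assemble the Hilbert--Mumford argument. Any (rational) 1-PS $\lambda$ of $\Sb$ factors, up to rational rescaling, as $\lambda_1\lambda_2$ with $\lambda_1\in\hat{G}$ central and $\lambda_2\in G'$. For $z\in F$, $\lambda_1(t)\cdot z = z$ and $\hat{G}$ acts trivially on the $\mathcal{L}^{\chi_{-\beta}}$-fibre at $z$ by Step 2, while $G'$ preserves $F$ (since it lies in $\Sb$, which commutes with $\lambda_\beta$ and so preserves $Z_\beta$). Therefore
\[ \mu^{\mathcal{L}^{\chi_{-\beta}}}(z,\lambda) = \mu^{\mathcal{L}^{\chi_{-\beta}}}(z,\lambda_2) = \mu^{\mathcal{L}}(z,\lambda_2), \]
using Step 3 for the last equality. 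The Hilbert--Mumford criterion then gives the equality $F^{\Sb\text{-}ss}(\mathcal{L}^{\chi_{-\beta}}) = F^{G'\text{-}ss}(\mathcal{L})$.
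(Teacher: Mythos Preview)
Your proposal is correct and follows essentially the same approach as the paper: both arguments rest on the decomposition of $\Sb$ into the central torus $\hat{G}$ and the subgroup $G'$, together with the key computation that $\chi_\beta|_{\hat{G}} = \chi_F$ (your Step~2) and that the twist is trivial on $G'$ (your Step~3). The only cosmetic difference is that the paper packages the group theory via a surjection $\Phi:\Sb\to\hat{G}$ and the chain of finite-index inclusions $G'\hookrightarrow\ker\Phi$, $\ker\Phi\times\hat{G}\twoheadrightarrow\Sb$, invoking that GIT semistability is unchanged by finite kernels/cokernels, whereas you argue directly at the level of one-parameter subgroups via the Hilbert--Mumford criterion.
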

\begin{proof}
There is a surjective homomorphism $\Phi $ from $ \Sb $ to the central subgroup $ \hat{G} $ defined in Lemma \ref{descr of Ghat}:
\[ \Phi( g_{j}^i, h^{i}_{i,j}, h^{i+1}_{i,j} ) = ( (\det g^{i}_j)^{D/H_{i,j}(n)} , (\det h^i_{i,j} \det h^{i+1}_{i,j})^{D/2I_{i,j}(n)}   ) \]
where $D= \Pi_{i=m_1}^{m_2} \Pi_{j=1}^{s_i} H_{i,j}(n) \times \Pi_{i=m_1}^{m_2-1} \Pi_{j=1}^{t_i} 2I_{i,j}(n)$. The composition of the inclusion of $\hat{G}$ into $\Sb$ with $\Phi$ is
\[ (u_{i,j}, w_{i,j}) \mapsto ( u_{i,j}^D, w_{i,j}^D). \] Therefore, $\ker  \Phi \times \hat{G}$ surjects onto $\Sb$ with finite kernel and since GIT semistability is unchanged by finite subgroups we have
$ F^{\Sb -ss} (\mathcal{L}^{\chi_{-\beta}}) = F^{\ker  \Phi \times \hat{G} -ss} (\mathcal{L}^{\chi_{-\beta}}) .$

Observe that the restriction of $\chi_\beta$ to the central subgroup $\hat{G}$
\[ \chi_\beta (u_{i,j},v_{i,j}) = \prod_{i=m_1}^{m_2} \prod_{j=1}^{s_i} u_{i,j}^{a_{i,j} H_{i,j}(n)} \times \prod_{i=m_1}^{m_2-1} \prod_{j=1}^{t_i} w_{i,j}^{b_{i,j} 2I_{i,j}(n)} \]
is equal to the character $\chi_F : \hat{G} \rightarrow \mathbb{C}^*$ defined in Lemma \ref{descr of Ghat}. As we are considering the action of $\ker  \Phi \times \hat{G}$ on $F$ linearised by $\mathcal{L}^{\chi_{-\beta}}$, the effects of the action of $\hat{G}$ and the modification by the character corresponding to $-\beta$ cancel so that
$  F^{\ker  \Phi \times \hat{G} -ss} (\mathcal{L}^{\chi_{-\beta}}) = F^{\ker  \Phi -ss} (\mathcal{L}) .$
Finally note that $G'$ injects into $\ker  \Phi$ with finite cokernel and so $F^{\ker  \Phi -ss} (\mathcal{L}) = F^{G' -ss} (\mathcal{L})$ which completes the proof.
\end{proof}

\subsection{A description of the stratification}\label{sec with thm}

Consider the semistable subscheme \[\mathfrak{T}_{(\tau)}^{ss} := \mathfrak{T}_{(\tau)}^{\Sb -ss}(\mathcal{L}^{\chi_{-\beta}})\] for the $\Sb$-action on $ \mathfrak{T}_{(\tau)}$ with respect to $\mathcal{L}^{\chi_{-\beta}}$. 
Recall from Remark \ref{descr of F and Ttau for cx} that we have an isomorphism
\[\mathfrak{T}_{(\tau)} \cong \mathfrak{T}_{{H}_{m_1,1}} \times \cdots \times \mathfrak{T}_{{H}_{m_1,s_{m_1}}} \times \mathfrak{T}^{tf}_{{I}_{m_1,1}} \times \cdots \times \mathfrak{T}^{tf}_{{I}_{m_1,t_{m_1}}} \times \mathfrak{T}_{{H}_{m_1 +1,1}}\times \cdots \times \mathfrak{T}_{{H}_{m_2,s_{m_2}}}.\]
Let 
\[z = \bigoplus_{i=m_1}^{m_2} \bigoplus_{j=1}^{s_i} z_{i,j}  \oplus \bigoplus_{i=m_1}^{m_2-1} \bigoplus_{j=1}^{t_i} y_{i,j} \]
be a point in $\mathfrak{T}_{(\tau)}$; that is, $z_{i,j}=(q^i_{i,j}, [0,1])$ is a point in $\mathfrak{T}_{{H}_{i,j}}$ corresponding to a complex $\mathcal{H}^\cdot_{i,j}$ concentrated in degree $i$ and $y_{i,j}=(p^i_{i,j}, p^{i+1}_{i,j} [\varphi^{i}_j,1])$ is a point in $\mathfrak{T}^{tf}_{{I}_{i,j}}$ corresponding to a complex $\mathcal{I}^\cdot_{i,j}$ concentrated in degrees $i$ and $i+1$. By Proposition \ref{descr of Fss}, we have \[\mathfrak{T}_{(\tau)}^{ss} =\mathfrak{T}_{(\tau)}^{G'-ss}(\mathcal{L}|_{\mathfrak{T}_{(\tau)}});\] therefore, $z$ is in $\mathfrak{T}_{(\tau)}^{ss}$ if and only if $\mu^{\mathcal{L}}(z,{\lambda}) \geq 0$ for every 1-PS $\lambda$ of $G'$. A 1-PS $\lambda$ of $G'$ is given by 
\begin{itemize} \item 1-PSs $\lambda^H_{i,j}$ of $\mathrm{SL}(V^i_{i,j})$ and 
 \item 1-PSs $\lambda^I_{i,j} =(\lambda^{I,i}_{i,j}, \lambda^{I,i+1}_{i,j})$ of $(\mathrm{GL}(W^i_{i,j}) \times \mathrm{GL}(W^{i+1}_{i,j}) )\cap \mathrm{SL}(W^i_{i,j} \oplus W^{i+1}_{i,j})$.
\end{itemize}

\begin{lemma}\label{lemma 1}
Suppose $n$ is sufficiently large. Then for any $z \in \mathfrak{T}_{(\tau)}$ as above for which a direct summand $\mathcal{H}^\cdot_{i,j}$ or $\mathcal{I}^\cdot_{i,j}$ is $(\underline{1}, \delta \ue /\epsilon)$-unstable, there is a 1-PS $\lambda$ of $G'$ such that $\mu^{\mathcal{L}}(z,{\lambda}) < 0$.
\end{lemma}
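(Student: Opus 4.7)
The plan is to exploit the product structure
\[ \mathfrak{T}_{(\tau)} \cong \prod_{i,j} \mathfrak{T}_{H_{i,j}} \times \prod_{i,j} \mathfrak{T}^{tf}_{I_{i,j}} \]
from Remark \ref{descr of F and Ttau for cx} together with the block structure of $G'$, reducing the problem to producing a destabilising 1-PS for one unstable summand at a time. The group $G'$ contains, for each cohomology factor, a block copy of $\SL(V^i_{i,j})$ that acts on $\mathfrak{T}_{H_{i,j}}$ and trivially on all other factors, and for each image factor a block copy of $(\GL(W^i_{i,j}) \times \GL(W^{i+1}_{i,j})) \cap \SL(W^i_{i,j} \oplus W^{i+1}_{i,j})$ with the analogous property. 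The first step is to observe that for any 1-PS $\lambda$ supported on one such block, the weights in Proposition \ref{HM prop}(i) vanish on every other summand, so $\mu^{\mathcal{L}}(z, \lambda)$ collapses to the Hilbert--Mumford weight of $\lambda$ acting on the single summand $z_{i,j}$ or $y_{i,j}$. Thus it suffices, given an unstable summand, to produce a 1-PS of the corresponding block with strictly negative HM weight at that summand.

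For an unstable cohomology summand $\mathcal{H}^\cdot_{i,j}$, a single torsion free sheaf in degree $i$, I would note that $(\underline{1}, \delta \ue/\epsilon)$-instability here coincides with Gieseker-instability, because the $\delta \ue/\epsilon$-shift contributes an identical term to the reduced Hilbert polynomial of every subsheaf of a single-position complex. The restricted setup $(\mathfrak{T}_{H_{i,j}}, \SL(V^i_{i,j}), \mathcal{L}|)$ then agrees, up to a positive scaling and a character twist which is trivial on $\SL$, with Gieseker's classical GIT construction. So I would pick a Gieseker-destabilising subsheaf $\mathcal{G} \subsetneq \mathcal{H}^i_{i,j}$ (which, for $n$ large by Lemma \ref{cxs HN type bdd}, lifts to a proper filtration $H^0(\mathcal{G}(n)) \subsetneq V^i_{i,j}$), form the standard two-weight 1-PS of $\SL(V^i_{i,j})$ adapted to this filtration, and verify via Proposition \ref{HM prop}(i) that its HM weight is a positive multiple of $P(\mathcal{G})(n)/\rk \mathcal{G} - H_{i,j}(n)/\rk H_{i,j}$, which is positive at $n \gg 0$ by Gieseker-instability.

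For an unstable image summand $\mathcal{I}^\cdot_{i,j}$, the analogous restriction $(\mathfrak{T}^{tf}_{I_{i,j}}, (\GL \times \GL) \cap \SL, \mathcal{L}|)$ is Schmitt's construction for two-term complexes with parameters inherited from $(\underline{1}, \delta \ue/\epsilon)$. By Remark \ref{only need to worry about subcxs}, after a harmless rescaling of $(\delta, \ue)$, I need only consider 1-PSs inducing filtrations by subcomplexes; so I would take a destabilising subcomplex $\mathcal{G}^\cdot \subsetneq \mathcal{I}^\cdot_{i,j}$ and lift to filtrations $H^0(\mathcal{G}^k(n)) \subseteq W^k_{i,j}$ for $k = i, i+1$, proper in at least one position. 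Then I would construct a two-weight 1-PS placing the higher weight on the sub-parts and the lower on the quotient-parts, with weights determined by the $\SL$-normalisation
\[ k_1\bigl(\dim H^0(\mathcal{G}^i(n)) + \dim H^0(\mathcal{G}^{i+1}(n))\bigr) + k_2\bigl(2I_{i,j}(n) - \dim H^0(\mathcal{G}^i(n)) - \dim H^0(\mathcal{G}^{i+1}(n))\bigr) = 0, \]
and apply Proposition \ref{HM prop}(i) to express the HM weight as, for $n \gg 0$, a positive multiple of $P^{\mathrm{red}}_{\underline{1}, \delta \ue/\epsilon}(\mathcal{G}^\cdot)(n) - P^{\mathrm{red}}_{\underline{1}, \delta \ue/\epsilon}(\mathcal{I}^\cdot_{i,j})(n)$, which is positive by instability.

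The main obstacle will be the bookkeeping: verifying that, after the $\SL$-normalisation of weights, the sum appearing in Proposition \ref{HM prop}(i) really does rearrange into a positive multiple of the difference of $(\underline{1}, \delta \ue/\epsilon)$-reduced Hilbert polynomials at $n$. This amounts to re-doing a piece of Schmitt's original semistability calculation inside the block subgroup, with the extra point that the ambient factor $P_{\underline{1}}(n)/(r_{\underline{1}}\delta(n))$ appearing in the HM formula must be shown to interact cleanly with the ranks via the $\SL$-trace condition on the chosen weights. The various requirements of ``$n$ sufficiently large'' (for $n$-regularity of destabilising subobjects, and for the polynomial inequalities from (in)stability to become numerical at the chosen $n$) can be arranged uniformly by applying Lemma \ref{cxs HN type bdd} to the sub-Harder--Narasimhan types arising within the summands.
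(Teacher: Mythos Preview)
Your overall architecture---exploit the block structure of $G'$ to reduce to a single summand---is exactly what the paper does, and your treatment of the $\mathcal{H}^\cdot_{i,j}$ case matches the paper's (modulo a sign slip: you want $\mu<0$, so the HM weight should come out as a positive multiple of $H_{i,j}(n)/\rk H_{i,j}-P(\mathcal{G})(n)/\rk\mathcal{G}$, not the other way round).

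The $\mathcal{I}^\cdot_{i,j}$ case is where the two approaches diverge, and here your proposal has a genuine gap. You assert that the restricted setup $(\mathfrak{T}^{tf}_{I_{i,j}},\ (\GL\times\GL)\cap\SL,\ \mathcal{L}|)$ ``is Schmitt's construction for two-term complexes'', and hence that the HM weight of a subcomplex-induced 1-PS is a positive multiple of the difference of $(\underline{1},\delta\ue/\epsilon)$-reduced Hilbert polynomials at $n$. This is not correct as stated. The HM formula of Proposition~\ref{HM prop} carries the \emph{ambient} constant $P_{\underline{1}}(n)/(r_{\underline{1}}\delta(n))$ and the ambient normalisation $\eta'$ (for which $\sum_k\eta'_k r^k=0$ over \emph{all} positions $m_1,\dots,m_2$, not just $i,i+1$). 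Schmitt's identification of HM-semistability with complex semistability uses precisely that $\sum_k\eta'_k\rk\mathcal{E}^k=0$ for the whole complex; on the summand $\mathcal{I}^\cdot_{i,j}$ one has $(\eta'_i+\eta'_{i+1})\rk I_{i,j}\neq 0$ in general, and the clean rearrangement you are relying on fails. Concretely, for a general destabilising subcomplex $\mathcal{G}^\cdot\subset\mathcal{I}^\cdot_{i,j}$ the sign of $\mu^{\mathcal{L}}(z,\lambda)$ is governed by a mixture of rank and $h^0$-terms that does \emph{not} collapse to $P^{\mathrm{red}}_{\underline{1},\delta\ue/\epsilon}(\mathcal{G}^\cdot)(n)-P^{\mathrm{red}}_{\underline{1},\delta\ue/\epsilon}(\mathcal{I}^\cdot_{i,j})(n)$; this is the substance of what you flagged as ``bookkeeping'', and it is not routine.

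The paper sidesteps this entirely: rather than taking an arbitrary destabilising subcomplex, it uses Assumption~\ref{ass on epsilon} to classify the possible shapes of an unstable $\mathcal{I}^\cdot_{i,j}$ (either $d=0$; or $d$ has nontrivial kernel, split into three subcases by comparing $P^{\mathrm{red}}(\ker d)$ with $I_{i,j}/\rk I_{i,j}$; or $d$ is an isomorphism but the underlying sheaf is Gieseker-unstable). In each case a \emph{specific} subcomplex is chosen (e.g.\ $\mathcal{I}^i_{i,j}\to 0$, or $\ker d\to 0$, or $0\to\im d$, or the cone on a destabilising subsheaf), and one checks directly from the ambient HM formula that $\mu^{\mathcal{L}}(z,\lambda)<0$. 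The point is that these particular choices are engineered so that either the ambient constant $P_{\underline{1}}(n)/(r_{\underline{1}}\delta(n))$ cancels (as in the $d=0$ case, where $\mu=(\eta'_i-\eta'_{i+1})\rk I_{i,j}/\epsilon<0$) or the rank/$h^0$ comparison reduces to a clean Gieseker-type inequality. To repair your argument you would either need to carry out this case analysis, or prove a genuine comparison lemma between the ambient HM weight on the block and Schmitt's HM weight for the two-term problem---neither of which is the triviality you suggest.
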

\begin{proof}
We suppose $n$ is sufficiently large so that Gieseker semistability of a torsion free sheaf with Hilbert polynomial $H_{i,j}$ (respectively $I_{i,j}$) is equivalent to GIT-semistability of a point in the relevant quot scheme representing this sheaf with respect to the linearisation given by Gieseker. We may also assume $n$ is sufficiently large so for $1 \leq i \leq m-1$ and $1 \leq j \leq t_i$, we have that $(\underline{1}, \delta \underline{\eta}/\epsilon)$-semistability of a complex with Hilbert polynomials ${I}_{i,j}$ is equivalent to GIT semistability of a point in $\mathfrak{T}_{{I}_{i,j}}$ for the linearisation defined by these stability parameters.

Firstly suppose $\mathcal{H}^{i}_{i,j}$ is unstable for some $i$ and $j$; then there exists a subsheaf $\mathcal{H}^{i,1}_{i,j} \subset \mathcal{H}^{i}_{i,j}$ such that
\begin{equation}\label{eqn H} \frac{H^0(\mathcal{H}^{i,1}_{i,j}(n))}{\rk \mathcal{H}^{i,1}_{i,j}} > \frac{H_{i,j}(n)}{\rk H_{i,j}} .\end{equation}
We construct a 1-PS $\lambda= (\lambda^H_{i,j}, \lambda^I_{i,j})$ of $G'$ with three weights $\gamma_1  > \gamma_2 = 0 > \gamma_3 $. Let \[V^{i,1}_{i,j}= H^0(q^i_{i,j}(n))^{-1}H^0(\mathcal{H}^{i,1}_{i,j}(n))\] and let $V^{i,3}_{i,j}$ be an orthogonal complement to $V^{i,1}_{i,j} \subset V^{i}_{i,j}$. Define \[\lambda^H_{i,j} = \left( \begin{array}{cc} t^{\gamma_1} I_{V^{i,1}_{i,j}} &  \\ & t^{\gamma_3} I_{V^{i,3}_{i,j}} \end{array} \right)\]
and define all the other parts of $\lambda$ to be trivial (the weights $\gamma_1$ and $\gamma_3$ should be chosen so $\lambda^H_{i,j}$ has determinant 1). Then by Proposition \ref{HM prop}
\[ \mu^{\mathcal{L}}(z,\lambda) = \left(\frac{ P_{\underline{1}}(n)}{r_{\underline{1}} \delta(n)} + \frac{\tilde{\eta}_i}{\epsilon}\right) \left[ H_{i,j}(n) \rk \mathcal{H}^{i,1}_{i,j} - H^0(\mathcal{H}^{i,1}_{i,j}(n)) \rk H_{i,j} \right] < 0 .\]

Secondly suppose $\mathcal{I}^\cdot_{i,j}$ is unstable with respect to $(\underline{1}, \delta\underline{\eta}/\epsilon)$; then by our assumption on $\epsilon$ it is not isomorphic to the cone on the identity map of a semistable sheaf. Let $d: \mathcal{I}^{i}_{i,j} \rightarrow \mathcal{I}_{i,j}^{i+1}$ denote the boundary morphism of this complex. If $d = 0$, then we can choose the 1-PS $\lambda$ to pick out the subcomplex $\cI_i^{i,j} \rightarrow 0$. For example, let $\lambda$ have three weights $1 > 0 > -1$ and let the only nontrivial part be \[\lambda^I_{i,j} = (tI_{W^i_{i,j}},t^{-1}I_{W^{i+1}_{i,j}});\] then $\mu^{\mathcal{L}}(z,\lambda) < 0 $. If $d \neq 0$ but has nonzero kernel, then consider the reduced Hilbert polynomial of this kernel. If the kernel has reduced Hilbert polynomial strictly larger than $\mathcal{I}^i_{i,j}$, then choose $\lambda$ to pick out the subcomplex $\ker d \rightarrow 0$. If the kernel has reduced Hilbert polynomial strictly smaller than $\mathcal{I}^i_{i,j}$, then choose $\lambda$ to pick out the subcomplex $0 \rightarrow \im  d$. If the kernel has reduced Hilbert polynomial equal to $I_{i,j}/ \rk I_{i,j}$, then choose $\lambda$ to pick out the subcomplex $\mathcal{I}^i_{i,j} \rightarrow \im  d$. In all three cases we see that $\mu^{\mathcal{L}}(z,\lambda) < 0 $. Finally, if $d$ is an isomorphism but $\mathcal{I}^i_{i,j}$ is not Gieseker semistable, then let $\mathcal{I}^{i,1}_{i,j}$ be its maximal destabilising subsheaf. A 1-PS which picks out the subcomplex $\mathcal{I}^{i,1}_{i,j} \rightarrow d^i_j(\mathcal{I}^{i,1}_{i,j})$ will destabilise $z$.
\end{proof}

\begin{lemma}\label{lemma 2}
Suppose $n$ is sufficiently large and let $z$ be a point in $\mathfrak{T}_{(\tau)}$ such that all the direct summands $\mathcal{H}^\cdot_{i,j}$ and $\mathcal{I}^\cdot_{i,j}$ are semistable with respect to $(\underline{1}, \delta \ue /\epsilon)$. If $\lambda$ is a 1-PS of $G'$ which induces a filtration of $z$ by subcomplexes, then $\mu^{\mathcal{L}}(z,{\lambda}) \geq 0$.
\end{lemma}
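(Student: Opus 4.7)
The plan is to exploit the direct-sum structure of $z$ and the block structure of any 1-PS of $G'$ to reduce the inequality $\mu^{\mathcal{L}}(z, \lambda) \geq 0$ to two per-summand Gieseker-type inequalities. Writing $z = \bigoplus_{i,j} z_{i,j} \oplus \bigoplus_{i,j} y_{i,j}$ and decomposing $\lambda$ into pieces $\lambda^H_{i,j} \in \SL(V^i_{i,j})$ and $\lambda^I_{i,j}$ in the corresponding I-block factor of $G'$, the weight-space decomposition of $\oplus_i V^i$ (and hence of each $\mathcal{E}^i$) splits as a direct sum of the per-block decompositions. Substituting this into the formula of Lemma \ref{HM prop} (applicable because $\lambda$ induces a subcomplex filtration) gives the additive decomposition
\[
\mu^{\mathcal{L}}(z, \lambda) = \sum_{i,j} \mu^H_{i,j}(z_{i,j}, \lambda^H_{i,j}) + \sum_{i,j} \mu^I_{i,j}(y_{i,j}, \lambda^I_{i,j}).
\]

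For an H-block $(i_0, j_0)$, by hypothesis and Lemma \ref{lemma X} the sheaf $\mathcal{H}^{i_0}_{i_0, j_0}$ is Gieseker semistable, and only position $i = i_0$ contributes, so
\[
\mu^H_{i_0, j_0} = \left( \frac{P_{\underline{1}}(n)}{r_{\underline{1}} \delta(n)} + \frac{\eta'_{i_0}}{\epsilon} \right) \sum_{l} k^H_l \, \rk \mathcal{H}^{i_0, l}_{i_0, j_0}.
\]
Abel summation combined with the $\SL$ constraint $\sum_l k^H_l \dim V^{i_0}_{i_0, j_0, l} = 0$ rewrites the second factor as a positive combination of quantities $\rk \mathcal{H}^{i_0, (l)}_{i_0, j_0} - \tfrac{\dim V^{i_0}_{i_0, j_0, (l)}}{\dim V^{i_0}_{i_0, j_0}} \rk H_{i_0, j_0}$, each non-negative for $n$ large enough that Gieseker semistability translates into the leading-term rank inequality; and the prefactor is positive once $n$ is large enough that $P_{\underline{1}}(n)/\delta(n)$ dominates the bounded term $|\eta'_{i_0}|/\epsilon$. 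For an I-block $(i_0, j_0)$, Assumption \ref{ass on epsilon} identifies the summand with the cone on the identity of a Gieseker semistable sheaf $\im d_{i_0, j_0}$, and the subcomplex condition forces the induced filtrations on $W^{i_0}_{i_0, j_0}$ and $W^{i_0+1}_{i_0, j_0}$ to correspond under the identity boundary; writing $r_l$ and $d_l$ for the common rank and dimension of the $l$-th weight piece, the contribution becomes
\[
\mu^I_{i_0, j_0} = \left( \frac{2 P_{\underline{1}}(n)}{r_{\underline{1}} \delta(n)} + \frac{\eta'_{i_0} + \eta'_{i_0+1}}{\epsilon} \right) \sum_l k^I_l \, r_l,
\]
and the same Abel-summation trick (now with $\sum_l k^I_l d_l = 0$) together with Gieseker semistability of $\im d_{i_0, j_0}$ gives $\mu^I_{i_0, j_0} \geq 0$ for $n$ sufficiently large.

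The main obstacle is the bookkeeping in the additivity step, since different blocks carry distinct weight sets and one must observe that a weight occurring in one block contributes nothing to the weight-spaces of the others; and, in the I-block case, the verification that the subcomplex condition genuinely identifies the two vector-space filtrations. The quantifier order is also important: $\epsilon$ is fixed from the outset via Theorem \ref{HN filtrations for epsiloneta} and Assumption \ref{ass on epsilon}, and then $n$ is chosen sufficiently large depending on $\epsilon$ to guarantee both the equivalence of Gieseker semistability with the leading-term rank inequality used above and the positivity of the $P_{\underline{1}}(n)/\delta(n)$ prefactor. Summing the non-negative block contributions then gives $\mu^{\mathcal{L}}(z, \lambda) \geq 0$.
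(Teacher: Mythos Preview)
Your H-block argument is essentially the paper's: both use the $\SL(V^i_{i,j})$ constraint together with Gieseker semistability of $\mathcal{H}^i_{i,j}$ to show $\sum_l \gamma_l \rk \mathcal{H}^{i,l}_{i,j} \geq 0$, and then multiply by the positive prefactor.

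The I-block argument, however, has a genuine gap. You assert that the subcomplex condition ``forces the induced filtrations on $W^{i_0}_{i_0,j_0}$ and $W^{i_0+1}_{i_0,j_0}$ to correspond under the identity boundary'', and then work with a single common dimension $d_l$ and rank $r_l$. This is false. The 1-PS $\lambda^I_{i,j}=(\lambda^{I,i}_{i,j},\lambda^{I,i+1}_{i,j})$ consists of two \emph{independent} pieces in $\GL(W^i_{i,j})\times\GL(W^{i+1}_{i,j})$ constrained only by the joint determinant; the subcomplex condition gives only the \emph{containment} $d(\mathcal{I}^{i,(k)}_{i,j})\subset \mathcal{I}^{i+1,(k)}_{i,j}$, not equality. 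A concrete counterexample: take $\lambda^{I,i}_{i,j}(t)=t^{-1}I$ and $\lambda^{I,i+1}_{i,j}(t)=tI$. This lies in $G'$, induces a filtration by subcomplexes (the filtration on the source is trivial), yet $\dim W^{i,1}_{i,j}=0$ while $\dim W^{i+1,1}_{i,j}=\dim W^{i+1}_{i,j}$. In this example your formula $\mu^I_{i_0,j_0}=\bigl(2P_{\underline 1}(n)/r_{\underline 1}\delta(n)+(\eta'_{i_0}+\eta'_{i_0+1})/\epsilon\bigr)\sum_l k^I_l r_l$ is simply wrong, and nothing in your argument yields $\mu^I_{i_0,j_0}\geq 0$.

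The paper handles the I-block by keeping the two sides separate. It applies the Gieseker inequality to \emph{each} of $\mathcal{I}^i_{i,j}$ and $\mathcal{I}^{i+1}_{i,j}$ individually (replacing $\rk\mathcal{I}^{l,k}_{i,j}$ by $\tfrac{\rk I_{i,j}}{I_{i,j}(n)}\dim W^{l,k}_{i,j}$ up to a non-negative error), then uses the $\SL(W^i_{i,j}\oplus W^{i+1}_{i,j})$ constraint $\sum_k \gamma_k(\dim W^{i,k}_{i,j}+\dim W^{i+1,k}_{i,j})=0$ to kill the large $P_{\underline 1}(n)/r_{\underline 1}\delta(n)$ term. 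What remains is $(\eta'_{i+1}-\eta'_i)/\epsilon$ times $\sum_k \gamma_k \dim W^{i+1,k}_{i,j}$, and the last step is to show this sum is $\geq 0$ using the dimension inequality $\dim W^{i,(k)}_{i,j}\leq \dim W^{i+1,(k)}_{i,j}$ that comes from the subcomplex condition (via Abel summation). The strict monotonicity $\eta'_{i+1}>\eta'_i$ is essential here; your argument never invokes it, which is another indication that something is missing.
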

\begin{proof}
We suppose $n$ is chosen as in Lemma \ref{lemma 1}. The 1-PS $\lambda$ of $G'$ is given by 1-PSs $\lambda_{i,j}^H$ of $\mathrm{SL}(V^i_{i,j})$ and $\lambda^I_{i,j} =(\lambda^{I,i}_{i,j}, \lambda^{I,i+1}_{i,j})$ of $(\mathrm{GL}(W^i_{i,j}) \times \mathrm{GL}(W^{i+1}_{i,j}) )\cap \mathrm{SL}(W^i_{i,j} \oplus W^{i+1}_{i,j})$. We can diagonalise these 1-PSs simultaneously to get decreasing integers $\gamma_1 > \cdots > \gamma_u$  and decompositions $V^i_{i,j} = V^{i,1}_{i,j} \oplus \cdots \oplus V^{i,u}_{i,j}$ and similarly $W^i_{i,j} = W^{i,1}_{i,j} \oplus \cdots \oplus W^{i,u}_{i,j}$ and $W^{i+1}_{i,j}= W^{i+1,1}_{i,j} \oplus \cdots \oplus W^{i+1,u}_{i,j}$ such that
\[\lambda^H_{i,j}(t) = \left( \begin{array}{ccc} t^{\gamma_1} I_{V^{i,1}_{i,j}} & & \\ & \ddots & \\ & & t^{\gamma_u} I_{V^{i,u}_{i,j}} \end{array} \right)\]
and similarly for $\lambda^I_{i,j}$. The corresponding filtrations of these vector spaces give rise to filtrations of the sheaves $\mathcal{H}^i_{i,j}$, $\mathcal{I}^i_{i,j}$ and $\mathcal{I}^{i+1}_{i,j}$ and we let $\mathcal{H}^{i,k}_{i,j}$, $\mathcal{I}^{i,k}_{i,j}$ and $\mathcal{I}^{i+1,k}_{i,j}$ denote the successive quotients. As $\lambda$ induces a filtration by subcomplexes we have from Proposition \ref{HM prop} that
\begin{equation}\begin{split} \mu^{\mathcal{L}}(z,\lambda)  =   \sum_{i=m_1}^{m_2-1}  \sum_{j=1}^{t_i} \sum_{k=1}^u \gamma_k &\left[\left( \frac{ P_{\underline{1}}(n)}{r_{\underline{1}} \delta(n)} +\frac{{\eta}'_i}{\epsilon} \right) \rk \mathcal{I}^{i,k}_{i,j} + \left( \frac{ P_{\underline{1}}(n)}{r_{\underline{1}} \delta(n)} +\frac{{\eta}'_{i+1}}{\epsilon} \right) \rk \mathcal{I}^{i+1,k}_{i,j} \right] \\  &+  \sum_{i=m_1}^{m_2}  \sum_{j=1}^{s_i} \sum_{k=1}^u \gamma_k \left( \frac{ P_{\underline{1}}(n)}{r_{\underline{1}} \delta(n)} +\frac{{\eta}'_i}{\epsilon} \right) \rk \mathcal{H}^{i,k}_{i,j}. \end{split} \end{equation}
By construction of the linearisation (cf. the definition of $a_i$ in $\S$\ref{linearisation schmitt}), the numbers $P(n)/r\delta(n) + \eta_i'/\epsilon >0$. As $\mathcal{H}^i_{i,j}$, $\mathcal{I}^i_{i,j}$ and $\mathcal{I}^{i+1}_{i,j}$ are Gieseker semistable sheaves, 
\[  \sum_{k=1}^u \gamma_k \rk \mathcal{H}^{i,k}_{i,j} \geq 0 \quad \mathrm{and} \quad  \sum_{k=1}^u \gamma_k \left( \rk \mathcal{I}^{l,k}_{i,j} - \frac{\rk I_{i,j}}{I_{i,j}(n)} \dim  W^{l,k}_{i,j} \right) \geq 0 \quad \mathrm{for } \: l = i, i+1. \]
Therefore
\begin{equation*} \begin{split} \mu^{\mathcal{L}}(z,\lambda) & \geq  \frac{\rk I_{i,j}}{I_{i,j}(n)} \sum_{i=m_1}^{m_2-1}  \sum_{j=1}^{t_i} \sum_{k=1}^u \gamma_k \left[\left( \frac{ P_{\underline{1}}(n)}{r_{\underline{1}} \delta(n)} +\frac{{\eta}'_i}{\epsilon} \right)\dim  W^{i,k}_{i,j} + \left( \frac{ P_{\underline{1}}(n)}{r_{\underline{1}} \delta(n)} +\frac{{\eta}'_{i+1}}{\epsilon} \right) \dim  W^{i+1,k}_{i,j} \right] \\ & = \frac{\rk I_{i,j}}{I_{i,j}(n)} \sum_{i=m_1}^{m_2-1}  \sum_{j=1}^{t_i} \sum_{k=1}^u \gamma_k \left(\frac{{\eta}'_i}{\epsilon} \dim  W^{i,k}_{i,j} +  \frac{{\eta}'_{i+1}}{\epsilon}  \dim  W^{i+1,k}_{i,j} \right) \end{split} \end{equation*}
where the equality comes from the fact that $\lambda^I_{i,j}$ is a 1-PS of $ \mathrm{SL}(W^i_{i,j} \oplus W^{i+1}_{i,j})$ and so the weights satisfy 
$ \sum_{k=1}^u \gamma_k(\dim  W^{i,k}_{i,j} + \dim  {W}^{i+1,k}_{i,j}) =0.$
As $\lambda$ induces a filtration by subcomplexes, \[\dim  (W^{i,1}_{i,j} \oplus \dots \oplus W^{i,k}_{i,j}) \leq \dim  (W^{i+1,1}_{i,j} \oplus \dots \oplus W^{i+1,k}_{i,j})\]
 and it follows that $- \sum_{k=1}^u \gamma_k\dim  W^{i,k}_{i,j} =\sum_{k=1}^u \gamma_k\dim  W^{i+1,k}_{i,j} \geq 0$. Therefore
\[ \mu^{\mathcal{L}}(z,\lambda) \geq \frac{\rk I_{i,j}}{I_{i,j}(n)} \sum_{i=m_1}^{m_2-1}  \sum_{j=1}^{t_i}\frac{({\eta}'_{i+1} - {\eta}'_{i}) }{\epsilon}\sum_{k=1}^u \gamma_k \dim  W^{i+1,k}_{i,j} \geq 0 .\]
\end{proof}

Let $\mathfrak{T}^{ss}_{{H}_{i,j}} $ (resp. $\mathfrak{T}^{ss}_{{I}_{i,j}} $) be the subscheme of $\mathfrak{T}_{{H}_{i,j}} $ (resp. $\mathfrak{T}^{tf}_{{I}_{i,j}} $) which parametrises $(\underline{1}, \delta\ue /\epsilon)$-semistable complexes with Hilbert polynomials ${H}_{i,j}$ (resp. ${I}_{i,j}$).

\begin{prop}\label{descr of Ttauss}
For $n$ sufficiently large and by replacing $(\delta, \ue)$ by $(K\delta , \ue/K)$ for a sufficiently large integer $K$ we have an isomorphism
\[\mathfrak{T}_{(\tau)}^{ss} \cong \mathfrak{T}^{ss}_{{H}_{m_1,1}} \times \cdots \times \mathfrak{T}^{ss}_{{H}_{m_1,s_{m_1}}} \times \mathfrak{T}^{ss}_{{I}_{m_1,1}} \times \cdots \times \mathfrak{T}^{ss}_{{I}_{m_1,t_{m_1}}} \times \mathfrak{T}^{ss}_{{H}_{m_1 + 1,1}}\times \cdots \times \mathfrak{T}^{ss}_{{H}_{m_2,s_{m_2}}}.\]
\end{prop}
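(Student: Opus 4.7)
The plan is to combine the product decomposition of $\mathfrak{T}_{(\tau)}$ from Remark \ref{descr of F and Ttau for cx} with the reduction from $\Sb$-semistability to $G'$-semistability established in Proposition \ref{descr of Fss}, and then characterise $G'$-semistability of a decomposable point in terms of the $(\underline{1}, \delta \ue/\epsilon)$-semistability of its direct summands via Lemmas \ref{lemma 1} and \ref{lemma 2}.

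First, by Proposition \ref{descr of Fss}, I identify $\mathfrak{T}_{(\tau)}^{ss}=\mathfrak{T}_{(\tau)}^{\Sb-ss}(\cL^{\chi_{-\beta}})$ with the $G'$-semistable locus of $\mathfrak{T}_{(\tau)}$ with respect to the restriction of $\cL$. Via the product isomorphism of Remark \ref{descr of F and Ttau for cx}, a point $z \in \mathfrak{T}_{(\tau)}$ is a direct sum of points $z_{i,j}\in \mathfrak{T}_{H_{i,j}}$ and $y_{i,j} \in \mathfrak{T}^{tf}_{I_{i,j}}$ representing complexes $\mathcal{H}^\cdot_{i,j}$ and $\mathcal{I}^\cdot_{i,j}$. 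For the inclusion $\mathfrak{T}_{(\tau)}^{ss} \subseteq \prod \mathfrak{T}^{ss}_{H_{i,j}} \times \prod \mathfrak{T}^{ss}_{I_{i,j}}$, I argue contrapositively: if some summand of $z$ is $(\underline{1}, \delta\ue/\epsilon)$-unstable, Lemma \ref{lemma 1} supplies a 1-PS of $G'$ with $\mu^{\cL}(z,\lambda)<0$, so $z$ is $G'$-unstable.

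For the reverse inclusion, I assume every direct summand is $(\underline{1}, \delta\ue/\epsilon)$-semistable and must verify $\mu^{\cL}(z,\lambda)\geq 0$ for every 1-PS $\lambda$ of $G'$. This is where the rescaling $(\delta, \ue) \mapsto (K\delta, \ue/K)$ enters: by Remark \ref{only need to worry about subcxs}, choosing $K$ sufficiently large ensures that any 1-PS which does not induce a filtration by subcomplexes contributes $\mu^{\cL}(z,\lambda) \geq 0$ automatically. Indeed, Lemma \ref{lemma on fixed pts} bounds the additional correction $-N$ coming from non-subcomplex 1-PSs, while rescaling $\delta$ by $K$ scales the dominant sheaf-theoretic Hilbert--Mumford weights so that they swamp this correction. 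For the remaining 1-PSs of $G'$ that do induce filtrations by subcomplexes, Lemma \ref{lemma 2} gives $\mu^{\cL}(z,\lambda)\geq 0$ directly.

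Finally, I need to identify the factors $\mathfrak{T}^{ss}_{H_{i,j}}$ and $\mathfrak{T}^{ss}_{I_{i,j}}$ (defined as the loci of $(\underline{1}, \delta\ue/\epsilon)$-semistable complexes) with the GIT semistable subschemes appearing in the product; for $n$ sufficiently large this follows from Theorem \ref{schmitt theorem} together with the choice of $n$ made in Lemma \ref{lemma 1}. The main obstacle is to organise the rescaling step carefully: one must check that a single choice of $K$ controls all the non-subcomplex-type 1-PSs appearing in $G'$, uniformly over the product, and that this rescaling preserves the earlier hypotheses (in particular Assumption \ref{ass on epsilon} and the choice of $n$ dictated by Corollary \ref{how to pick n cx}) so that Lemmas \ref{lemma 1} and \ref{lemma 2} still apply after rescaling.
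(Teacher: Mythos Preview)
Your proposal is correct and follows essentially the same route as the paper: reduce $\Sb$-semistability to $G'$-semistability via Proposition~\ref{descr of Fss}, use Lemma~\ref{lemma 1} for one inclusion, Lemma~\ref{lemma 2} together with the rescaling of Remark~\ref{only need to worry about subcxs} (i.e.\ \cite{schmitt05} Theorem~1.7.1) for the other, and conclude by restricting the product isomorphism of Remark~\ref{descr of F and Ttau for cx}. The paper does not spell out the uniformity and compatibility checks you flag in your last paragraph; it simply invokes Schmitt's result for the rescaling and the choice of $n$ from Lemma~\ref{lemma 1}, so your extra caution there is harmless but not something the paper elaborates on.
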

\begin{proof}
We suppose $n$ is chosen as in Lemma \ref{lemma 1} and let $z$ be a point in $\mathfrak{T}_{(\tau)}$. By Proposition \ref{descr of Fss}
\[ \mathfrak{T}_{(\tau)}^{ss} := \mathfrak{T}_{(\tau)}^{\Sb -ss}(\mathcal{L}^{\chi_{-\beta}}) = \mathfrak{T}_{(\tau)}^{G'-ss}(\mathcal{L}|_{\mathfrak{T}_{(\tau)}}) \]
and so $z$ is in $\mathfrak{T}_{(\tau)}^{ss}$ if and only if $\mu^{\mathcal{L}}(z,{\lambda}) \geq 0$ for every 1-PS $\lambda$ of $G'$.

If a direct summand $\mathcal{H}^\cdot_{i,j}$ or $\mathcal{I}^\cdot_{i,j}$ of $z$ is $(\underline{1}, \delta \ue /\epsilon)$-unstable, then $z \notin  \mathfrak{T}_{(\tau)}^{ss}$ by Lemma \ref{lemma 1}. By Lemma \ref{lemma 2} we have seen that if each of the direct summands $\mathcal{H}^\cdot_{i,j}$ and $\mathcal{I}^\cdot_{i,j}$ of $z$ are $(\underline{1}, \delta \ue /\epsilon)$-semistable and $\lambda$ induces a filtration by subcomplexes, then $\mu^{\mathcal{L}}(z,{\lambda}) \geq 0$. It follows from \cite{schmitt05} Theorem 1.7.1 (see also Remark \ref{only need to worry about subcxs}) that by rescaling $(\delta, \ue)$ to $(K\delta, \ue/K)$ for $K$ a large integer, we can verify GIT-semistability by only checking for 1-PSs which induce filtrations by subcomplexes. It follows that if each of the direct summands $\mathcal{H}^\cdot_{i,j}$ and $\mathcal{I}^\cdot_{i,j}$ of $z$ are $(\underline{1}, \delta \ue /\epsilon)$-semistable, then $z \in \fT_{(\tau)}^{ss}$. Therefore, $z \in \fT^{ss}_{(\tau)}$ if and only if all the direct summands $\mathcal{H}^\cdot_{i,j}$ and $\mathcal{I}^\cdot_{i,j}$ of $z$ are $(\underline{1}, \delta \ue /\epsilon)$-semistable. In particular, the above isomorphism comes from restricting the isomorphism given in Remark \ref{descr of F and Ttau for cx} to $\mathfrak{T}^{ss}_{(\tau)}$.
\end{proof}

Recall that there is a retraction $p_\beta : Y_\beta \rightarrow Z_\beta$ where
\[ p_\beta(y) = \lim_{t \to 0} \lambda_{\beta}(t) \cdot y. \]


\begin{lemma}\label{descr of pbeta inv of nice sch}
Let $F^{ss}$ denote the connected components of $Z_\beta^{ss}$ meeting $\mathfrak{T}_{(\tau)}^{ss}$; then for $n$ sufficiently large
\[ p_\beta^{-1}(F^{ss}) \cap \mathfrak{T}^{tf} = p_\beta^{-1}(\mathfrak{T}_{(\tau)}^{ss}). \]
\end{lemma}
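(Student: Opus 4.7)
The plan is to reduce the identity to two intermediate facts:
(i) $F^{ss} \subseteq \mathfrak{T}^{tf}$, from which $F^{ss} = \mathfrak{T}_{(\tau)}^{ss}$ follows; and
(ii) $p_\beta^{-1}(F^{ss}) \subseteq \mathfrak{T}^{tf}$.
Given these, both sides of the asserted equality collapse to $p_\beta^{-1}(\mathfrak{T}_{(\tau)}^{ss})$.

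For (i), a point $z \in F^{ss}$ is $\Sb$-semistable for $\mathcal{L}^{\chi_{-\beta}}$, equivalently (by Proposition \ref{descr of Fss}) $G'$-semistable for $\mathcal{L}$. The argument of Proposition \ref{descr of Ttauss} (via Lemmas \ref{lemma 1} and \ref{lemma 2}, after rescaling $(\delta,\ue)$ to $(K\delta,\ue/K)$ so that only 1-PSs inducing subcomplex filtrations need be tested) applies to $F$, using that $F$ lies in the diagonal piece of the $\lambda_\beta$-fixed locus (Lemma \ref{fixed pt locus}) to provide the direct-sum decomposition; it forces each summand of $z$ to be $(\underline{1}, \delta \ue/\epsilon)$-semistable. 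Assumption \ref{ass on epsilon} then makes the $I$-type summands cones on the identity of torsion-free semistable sheaves (with vanishing, hence torsion-free, cohomology) and the $H$-type summands shifts of torsion-free semistable sheaves, so $z \in \mathfrak{T}^{tf}$. Combining with the corollary following Lemma \ref{descr of Zbeta}, which exhibits $\mathfrak{T}_{(\tau)}$ as a union of connected components of $Z_\beta \cap \mathfrak{T}^{tf}$, we obtain $F \cap \mathfrak{T}^{tf} = \mathfrak{T}_{(\tau)}$ and hence $F^{ss} = Z_\beta^{ss} \cap \mathfrak{T}_{(\tau)} = \mathfrak{T}_{(\tau)}^{ss}$.

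For (ii), take $y \in Y_\beta$ with $p_\beta(y) \in F^{ss} \subseteq \mathfrak{T}^{tf}$. The open subset $\mathfrak{T} \subset \overline{\mathfrak{T}}^{tf}$ and its closed complement (the locus $\zeta = 0$) are both $\lambda_\beta$-invariant, so having the limit $p_\beta(y)$ lie in $\mathfrak{T}$ forces $y \in \mathfrak{T}$; by Lemma \ref{lemma on fixed pts}(i), $\lambda_\beta$ then induces a filtration of $\mathcal{E}^\cdot_y$ by subcomplexes with graded object $p_\beta(y)$. Torsion-freeness of the cohomology of $\mathcal{E}^\cdot_y$ follows from that of $p_\beta(y)$ via the spectral sequence of the filtered complex: using the explicit Harder--Narasimhan-like structure of the filtration supplied by Theorem \ref{HN filtrations for epsiloneta}, whose graded pieces are of the form $\mathcal{H}^i[-i]$ or $\mathrm{Cone}(\mathrm{id})[-(i+1)]$, the $E_\infty$ terms and hence the successive quotients of the induced filtration on $\mathcal{H}^\cdot(\mathcal{E}^\cdot_y)$ remain torsion-free; since extensions of torsion-free sheaves are torsion-free, $y \in \mathfrak{T}^{tf}$.

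The main obstacle is the torsion-freeness transfer in (ii): a priori the abutment of such a spectral sequence can pick up torsion from the $E_r$-differentials even when the $E_1$ page is torsion-free, so the argument must really exploit the very specific Harder--Narasimhan structure of the $\lambda_\beta$-filtration rather than a purely formal homological input. Once (i) and (ii) are in place, $p_\beta^{-1}(F^{ss}) \cap \mathfrak{T}^{tf} = p_\beta^{-1}(F^{ss}) = p_\beta^{-1}(\mathfrak{T}_{(\tau)}^{ss})$, proving the lemma.
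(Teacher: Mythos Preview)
Your overall reduction to (i) and (ii) is sound, but both steps diverge from the paper's argument in ways worth noting.

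For (ii) you have made life much harder than necessary. You already observe that $\mathfrak{T}\subset\overline{\mathfrak{T}}^{tf}$ is open and $\lambda_\beta$-invariant, so $p_\beta(y)\in\mathfrak{T}$ forces $y\in\mathfrak{T}$. Exactly the same reasoning applies to $\mathfrak{T}^{tf}$: it is open in $\overline{\mathfrak{T}}^{tf}$ and $G$-invariant (torsion-freeness of cohomology is an isomorphism invariant). Hence if $p_\beta(y)\in\mathfrak{T}^{tf}$, then $\lambda_\beta(t)\cdot y\in\mathfrak{T}^{tf}$ for $t$ close to $0$, and by invariance $y\in\mathfrak{T}^{tf}$. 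This is precisely the paper's argument, and it dispenses entirely with the spectral-sequence step whose torsion-control you rightly flagged as delicate. Your spectral-sequence route is not wrong in spirit, but it is unnecessary and the acknowledged gap is real.

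For (i) the paper takes a different and somewhat lighter path. Rather than proving $F^{ss}\subseteq\mathfrak{T}^{tf}$ (which requires extending Lemma~\ref{lemma 1} from $\mathfrak{T}_{(\tau)}$ to all of $F$, where a priori $\zeta$ could vanish and the summands need not represent complexes), the paper works directly with the inclusion $p_\beta^{-1}(F^{ss})\cap\mathfrak{T}^{tf}\subseteq p_\beta^{-1}(\mathfrak{T}_{(\tau)}^{ss})$: given $y\in\mathfrak{T}^{tf}$ with $p_\beta(y)\in F^{ss}$, the hypothesis $y\in\mathfrak{T}$ together with the diagonal nature of $F$ forces the limit to lie in $\mathfrak{T}_{(\tau)}$ (via Lemma~\ref{lemma on fixed pts}), and then one invokes Proposition~\ref{descr of Fss} and the argument of Lemma~\ref{lemma 1} to see each summand is semistable, i.e.\ $p_\beta(y)\in\mathfrak{T}_{(\tau)}^{ss}$. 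This way one never needs the stronger assertion $F^{ss}=\mathfrak{T}_{(\tau)}^{ss}$ as a standalone lemma; the membership $y\in\mathfrak{T}^{tf}$ does the work of placing the limit inside $\mathfrak{T}_{(\tau)}$ before any semistability is checked.
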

\begin{proof}
Let $n$ be chosen as in Proposition \ref{descr of Ttauss}. Let $ y \in  p_\beta^{-1}(\mathfrak{T}_{(\tau)}^{ss})$ so that \[p_\beta (y)=\lim_{t \to 0} \lambda_{\beta}(t) \cdot y   \in  \mathfrak{T}^{ss}_{(\tau)} \subset F^{ss}.\] If $y \notin \mathfrak{T}^{tf}$, then for all $t \neq 0$ we have $\lambda_{\beta}(t) \cdot y \notin \mathfrak{T}^{tf}$ which would contradict the openness of $\mathfrak{T}^{tf} \cap F^{ss}$ in $F^{ss}$. 

Conversely suppose $y = (q^{m_1}, \dots , q^{m_2}, [\varphi :1]) \in \mathfrak{T}^{tf}$ and $z=p_\beta(y) \in F^{ss}$ where $q^i : V^i \otimes \mathcal{O}_X(-n) \rightarrow \mathcal{E}^i$ and $\varphi$ is given by $d^{i}: \mathcal{E}^i \rightarrow \mathcal{E}^{i+1}$. The scheme $F^{ss}$ is contained in the diagonal components of $Z_\beta^{ss}$; therefore the 1-PS $\lambda_\beta$ induces a filtration of $y$ by subcomplexes and the associated graded point is
\[z=(\oplus_{i,j} z_{i,j}) \oplus (\oplus_{i,j} y_{i,j} )\] where $ z_{i,j}=(q^i_{i,j},[0:1])$ and $y_{i,j} = (p^i_{i,j}, p^{i+1}_{i,j}, [d^{i}_j : 1])$ both represent complexes (cf. Lemma \ref{lemma on fixed pts}) and so $z = p_\beta (y) \in \fT_{(\tau)}$. By Proposition \ref{descr of Fss}, the limit $z$ is in the GIT semistable set for the action of $G'$ on $Z_\beta$ with respect to $\mathcal{L}$. We can apply the arguments used in the proof of Lemma \ref{lemma 1} to show that $z_{i,j} \in \mathfrak{T}^{ss}_{{H}_{i,j}} $ and $y_{i,j} \in \mathfrak{T}^{ss}_{{I}_{i,j}}$.
\end{proof}

Recall that we have fixed Schmitt stability parameters $(\underline{1}, \delta \underline{\eta} / \epsilon)$ for complexes over $X$ where $\epsilon > 0$ is a rational number, $\eta_i$ are strictly increasing rational numbers indexed by the integers and $\delta$ is a positive rational polynomial such that $\deg \delta = \max (\dim  X -1,0)$. We may also assume that $\delta$ is sufficiently large (so that the scaling of Proposition \ref{descr of Ttauss} above has been done). We have assumed $\epsilon$ is very small and that $\tau$ is the Harder--Narasimhan type with respect to $(\underline{1}, \delta \underline{\eta} / \epsilon)$ of a complex $\cxF$ with torsion free cohomology sheaves and Hilbert polynomials $P=(P^{m_1}, \dots, P^{m_2})$. Let $\beta = \beta(\tau,n)$ be the rational weight given in Definition \ref{defn of beta cxs}. Provided $n$ is sufficiently large, all complexes with Harder--Narasimhan type $\tau$ may be represented by points in the scheme $\mathfrak{T}^{tf}=\fT^{tf}(n)$. We defined $R_\tau$ to be the set of points in $\mathfrak{T}^{tf}$ which parametrise complexes with this Harder--Narasimhan type $\tau$ and the following theorem provides $R_\tau$ with a scheme structure. There is an action of \[G = \Pi_{i=m_1}^{m_2} \mathrm{GL}(P^i(n)) \cap \mathrm{SL}(\Sigma_{i=m_1}^{m_2} P^i(n))\]
on this parameter scheme and the stability parameters determine a linearisation $\mathcal{L}$ of this action. Associated to this action there is a stratification $\{ S_\beta : \beta \in \mathcal{B} \}$ of the projective completion $\overline{\mathfrak{T}}^{tf}$ indexed by a finite set $\mathcal{B}$ of rational weights.

\begin{thm}\label{HN strat is strat}
For $n$ sufficiently large we have:
\begin{enumerate}
\renewcommand{\labelenumi}{\roman{enumi})}
\item $\beta = \beta(\tau,n)$ belongs to the index set $\mathcal{B}$ for the stratification $\{ S_\beta : \beta \in \mathcal{B} \}$ of $\overline{\mathfrak{T}}^{tf}$,
\item $R_\tau = G p_\beta^{-1}(\mathfrak{T}_{(\tau)}^{ss})$ and,
\item The subscheme $R_\tau  = G p_\beta^{-1}(\mathfrak{T}_{(\tau)}^{ss})$ of the parameter scheme $\mathfrak{T}^{tf}$ parametrising complexes with Harder--Narasimhan type $\tau$ is a union of connected components of $S_\beta \cap \mathfrak{T}^{tf}$.
\end{enumerate}
\end{thm}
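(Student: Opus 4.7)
The plan is to establish (ii) first and then deduce (i) and (iii) from it. For the inclusion $R_\tau \supseteq Gp_\beta^{-1}(\fT_{(\tau)}^{ss})$, I take $y \in p_\beta^{-1}(\fT_{(\tau)}^{ss})$; by Lemma \ref{descr of pbeta inv of nice sch}, $y \in \fT^{tf}$, so $y$ represents a complex $\cxE$ with torsion free cohomology. The limit $\bar{z} := p_\beta(y) \in \fT_{(\tau)}^{ss} \subseteq F$ lies in the diagonal part of the $\lambda_\beta$-fixed locus (Lemma \ref{fixed pt locus}), so by Lemma \ref{lemma on fixed pts} and Remark \ref{rmk on fixed pts} the 1-PS $\lambda_\beta$ induces a filtration of $\cxE$ by subcomplexes whose associated graded complex is $\bar{z}$. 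Proposition \ref{descr of Ttauss} then writes $\bar{z}$ as a direct sum of $(\underline{1},\delta\ue/\epsilon)$-semistable complexes with Hilbert polynomials exactly those specified by $\tau$; since these have strictly decreasing reduced Hilbert polynomials by definition of a Harder--Narasimhan type, the filtration is itself the Harder--Narasimhan filtration of $\cxE$, so $y \in R_\tau$. $G$-invariance of $R_\tau$ closes the inclusion.

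For the reverse inclusion $R_\tau \subseteq Gp_\beta^{-1}(\fT_{(\tau)}^{ss})$, I take $z \in R_\tau$ representing $\cxE$. The Harder--Narasimhan filtration of $\cxE$ induces a filtration of each $V^i$ via the isomorphism $V^i \cong H^0(\cE^i(n))$ whose successive quotient dimensions are the values $H_{i,j}(n)$ and $I_{i,j}(n)$ (using $n$-regularity from Assumption \ref{assum on n}); these match the dimensions in the standard filtration of $V^i$ used in Definition \ref{defn of beta cxs} to construct $\lambda_\beta$. Choosing bases of each $V^i$ compatible with the induced filtration produces $g \in \prod_i \GL(V^i)$, and after rescaling by the trivially-acting central $\CC^*$ we may take $g \in G$. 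Then $g^{-1}\cdot z$ has its Harder--Narasimhan filtration aligned with the standard filtration, so $\lambda_\beta$ induces it by subcomplexes, and by Lemma \ref{lemma on fixed pts} the limit $p_\beta(g^{-1}\cdot z)$ is the associated graded complex -- a direct sum of $(\underline{1},\delta\ue/\epsilon)$-semistable HN quotients, which sits in $\fT_{(\tau)}^{ss}$ by Proposition \ref{descr of Ttauss}.

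Part (i) is now immediate: $R_\tau$ is nonempty (by hypothesis on the fixed complex $\cxF$), so by (ii) $\fT_{(\tau)}^{ss}$ is nonempty, forcing $Z_\beta^{ss}$ and thus $S_\beta = GY_\beta^{ss}$ to be nonempty, so $\beta \in \cB$. For (iii), combining (ii) with Lemma \ref{descr of pbeta inv of nice sch} gives
\[ R_\tau = Gp_\beta^{-1}(\fT_{(\tau)}^{ss}) = G\bigl(p_\beta^{-1}(F^{ss}) \cap \fT^{tf}\bigr) = Gp_\beta^{-1}(F^{ss}) \cap \fT^{tf}, \]
using $G$-invariance of $\fT^{tf}$. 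Since $F^{ss}$ is clopen in $Z_\beta^{ss}$, $p_\beta^{-1}(F^{ss})$ is $P_\beta$-invariant and clopen in $Y_\beta^{ss}$; via $S_\beta \cong G \times^{P_\beta} Y_\beta^{ss}$ the set $Gp_\beta^{-1}(F^{ss})$ is clopen in $S_\beta$, hence a union of connected components, and intersecting with the $G$-invariant subset $\fT^{tf}$ preserves this inside $S_\beta \cap \fT^{tf}$.

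The main obstacle I anticipate is the change-of-basis step in the second inclusion in (ii): one must translate the abstract Harder--Narasimhan filtration of an arbitrary complex of type $\tau$ into a filtration of each $V^i$ whose subspace dimensions agree with those baked into $\lambda_\beta$, and then realise the alignment as an element of $G$. This rests essentially on $n$-regularity (Corollary \ref{how to pick n cx} and Assumption \ref{assum on n}) so that $\dim H^0(\mathcal{A}^i_{k,(j)}(n))$ equals the Hilbert polynomial value $H_{i,j}(n)$ (or $I_{i,j}(n)$) dictated by $\tau$, together with the small but necessary rescaling by the trivially-acting central $\CC^*$ to descend from $\prod_i \GL(V^i)$ into $G = \SL(\oplus_i V^i) \cap \prod_i \GL(V^i)$.
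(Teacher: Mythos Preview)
Your proof is correct and follows essentially the same approach as the paper's. The only real difference is the order: the paper establishes (i) first by working with the specific point $z$ representing $\cxF$ (for which $\lambda_\beta$ was constructed, so no change of basis is needed), then deduces the inclusion $p_\beta^{-1}(\fT_{(\tau)}^{ss}) \subset R_\tau$ from that argument; you instead prove (ii) directly and read off (i) as a nonemptiness consequence. Your treatment is in places more explicit than the paper's---notably the invocation of Lemma~\ref{descr of pbeta inv of nice sch} to ensure $y\in\fT^{tf}$, and the remark about rescaling by the central $\CC^*$ to land in $G$---but the underlying ideas are identical.
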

\begin{proof}
Suppose $n$ is sufficiently large as in Proposition \ref{descr of Ttauss}. We defined $\beta$ by fixing a point $z = (q^{m_1}, \dots, q^{m_2} , [\varphi:1]) \in R_\tau$ corresponding to the complex $\mathcal{F}^\cdot$ with Harder--Narasimhan type $\tau$. We claim that $\overline{z} :=p_\beta(z) \in Z_\beta^{ss}$ which implies i). The 1-PS $\lambda_{\beta}$ induces the Harder--Narasimhan filtration of $\cxF$ and 
 $\overline{z}= \lim_{t \to 0} \lambda_\beta(t) \cdot z $ is the graded object associated to this filtration. By Proposition \ref{descr of Ttauss} it suffices to show that each summand in the associated graded object is $(\underline{1}, \delta \underline{\eta} / \epsilon)$-semistable, but this follows by definition of the Harder--Narasimhan filtration.

In fact the above argument shows that $p_\beta^{-1}(\mathfrak{T}_{(\tau)}^{ss}) \subset R_\tau$ and since $R_\tau$ is $G$-invariant we have $G p_\beta^{-1}(\mathfrak{T}_{(\tau)}^{ss}) \subset R_\tau$. To show ii) suppose $y = (q^{m_1}, \dots, q^{m_2} ,[\varphi : 1]) \in R_\tau$ corresponds to a complex $\cxE$ with Harder--Narasimhan filtration 
\[ 0 \subsetneq \mathcal{H}^\cdot_{m_1,(1)} \subsetneq \cdots \subsetneq \mathcal{H}^\cdot_{m_1,(s_{m_1})} \subsetneq \mathcal{I}^\cdot_{m_1,(1)} \cdots \mathcal{I}^\cdot_{m_1,(t_{m_1})} \subsetneq \cdots \subsetneq \mathcal{H}^\cdot_{m_2,(s_{m_2})}=\mathcal{E}_\cdot \]
of type $\tau$. Then this filtration induces a filtration of each vector space $V^i$ and we can choose a change of basis matrix $g$ which switches this filtration with the filtration of $V^i$ given at (\ref{filtr of Vi}) used to define $\beta$. Then $g \cdot y  \in p_\beta^{-1}(\mathfrak{T}_{(\tau)}^{ss}) $ which completes the proof of ii). 

Since $F^{ss}$ is a union of connected components of $Z_\beta^{ss}$, the scheme $Gp_\beta^{-1}(F^{ss}) $ is a union of connected components of $S_\beta$. Therefore, $Gp_\beta^{-1}(F^{ss}) \cap \mathfrak{T}^{tf} $ is a union of connected components of $S_\beta \cap \mathfrak{T}^{tf}$. By ii) and Lemma \ref{descr of pbeta inv of nice sch}  
\[R_\tau = G p_\beta^{-1}(\mathfrak{T}_{(\tau)}^{ss})= Gp_\beta^{-1}(F^{ss}) \cap \mathfrak{T}^{tf} \] 
which proves iii).
\end{proof}

\section{Quotients of the Harder--Narasimhan strata}\label{sec on quot}

In the previous section we saw for $\epsilon$ very small and a fixed Harder--Narasimhan type $\tau$ with respect to $(\underline{1}, \delta \ue/\epsilon)$, there is a parameter space $R_\tau$ for complexes of this Harder--Narasimhan type and $R_\tau$ is a union of connected components of a stratum $S_\beta(\tau) \cap \fT^{tf}(n)$ when $n$ is sufficiently large. The action of $G$ on $\fT$ restricts to an action on $R_\tau$ such that the orbits correspond to isomorphism classes of complexes of Harder--Narasimhan type $\tau$. In this section we consider the problem of constructing a quotient of the $G$-action on this Harder--Narasimhan stratum $R_\tau$. If a suitable quotient did exist, then it would provide a moduli space for complexes of this Harder--Narasimhan type. In particular, it would have the desirable property that for two complexes to represent the same point it is necessary that their cohomology sheaves have the same Harder--Narasimhan type.

By \cite{hoskinskirwan} Proposition 3.6, any stratum in a stratification associated to a linearised $G$-action on a projective scheme $B$ has a categorical quotient. We can apply this to our situation and produce a categorical quotient of the $G$-action on $R_\tau$.

\begin{prop}
The categorical quotient of the $G$-action on $R_\tau$ is isomorphic to the product
\[ \prod_{i=m_1}^{m_2} \prod_{j=1}^{s_i}M^{(\underline{1}, \delta \ue/\epsilon)-ss}(X,{H}_{i,j}) \times \prod_{i=m_1}^{m_2 -1} \prod_{j=1}^{t_i}M^{(\underline{1}, \delta \ue/\epsilon)-ss}(X,{I}_{i,j}) \]
where $M^{(\underline{1}, \delta \ue/\epsilon)-ss}(X,{P})$ denotes the moduli space of $(\underline{1}, \delta \ue/\epsilon)$-semistable complexes with invariants $P$. Moreover:
\begin{enumerate}
\item A complex with invariants $H_{i,j}$ is just a shift of a sheaf and it is $(\underline{1}, \delta \ue/\epsilon)$-semistable if and only if the corresponding sheaf is Gieseker semistable.
\item A complex with invariants $I_{i,j}$ is concentrated in degrees $[i,i+1]$ and it is $(\underline{1}, \delta \ue/\epsilon)$-semistable if and only if it is isomorphic to a shift of the cone on the identity morphism of a Gieseker semistable sheaf.
\end{enumerate}
\end{prop}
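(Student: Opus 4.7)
The plan is to deduce the identification of the categorical quotient from three ingredients already in hand: the description $R_\tau = Gp_\beta^{-1}(\fT_{(\tau)}^{ss})$ as a union of connected components of $S_\beta \cap \fT^{tf}$ in Theorem \ref{HN strat is strat}; the product decomposition of $\fT_{(\tau)}^{ss}$ provided by Proposition \ref{descr of Ttauss}; and the general principle cited from \cite{hoskinskirwan} that each stratum in the GIT stratification carries a natural categorical quotient.

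First I would invoke the general principle for strata: since $S_\beta \cong G \times^{P_\beta} Y_\beta^{ss}$ and $p_\beta: Y_\beta^{ss} \to Z_\beta^{ss}$ is a $P_\beta$-equivariant retraction (with the unipotent radical acting trivially on $Z_\beta^{ss}$ and the Levi $\Sb$ acting linearly), the categorical quotient of $G$ acting on $S_\beta$ is identified with $Z_\beta^{ss} /\!/ \Sb$ taken with respect to the twisted linearisation $\cL^{\chi_{-\beta}}$. Restricting to the union $R_\tau$ of connected components of $S_\beta \cap \fT^{tf}$, the corresponding categorical quotient is a union of connected components of $F^{ss}/\!/\Sb$; and since the components of $F^{ss}$ relevant to $R_\tau$ are precisely those meeting $\fT_{(\tau)}^{ss}$ by Lemma \ref{descr of pbeta inv of nice sch}, one obtains
\[ R_\tau /\!/ G \;\cong\; \fT_{(\tau)}^{ss} /\!/ \Sb. \]

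Next I would use Proposition \ref{descr of Fss} to replace the $\Sb$-action linearised by $\cL^{\chi_{-\beta}}$ with the $G'$-action linearised by $\cL$ itself, and then apply the product decomposition of Proposition \ref{descr of Ttauss}. The group $G'$ splits compatibly as $\prod_{i,j} \SL(V^i_{i,j}) \times \prod_{i,j} [(\GL(W^i_{i,j}) \times \GL(W^{i+1}_{i,j})) \cap \SL(W^i_{i,j} \oplus W^{i+1}_{i,j})]$ and each factor acts on the corresponding factor of $\fT_{(\tau)}^{ss}$. The GIT quotient then factors as a product of GIT quotients, each of which, after projecting out the finite kernel to pass to the full group $G_{\underline{1}}$ appearing in Schmitt's construction, is by Theorem \ref{schmitt theorem} the moduli space $M^{(\underline{1}, \delta \ue/\epsilon)-ss}(X, H_{i,j})$ or $M^{(\underline{1}, \delta \ue/\epsilon)-ss}(X, I_{i,j})$. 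Assertion (1) is then immediate from Lemma \ref{lemma X} applied to complexes concentrated in a single degree, and assertion (2) is precisely the content of Assumption \ref{ass on epsilon} together with Remark \ref{rmk on sigma0}.

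The main obstacle is the book-keeping required to match linearisations under the above identifications: one must check that the restriction of $\cL^{\chi_{-\beta}}$ to each factor of the product in Proposition \ref{descr of Ttauss} agrees with the linearisation Schmitt uses on $\fT_{H_{i,j}}$ or $\fT_{I_{i,j}}$, at least up to a character of the centre that acts trivially on the relevant semistable locus. The cancellation of the central characters, which was already visible in the proof of Proposition \ref{descr of Fss} where the twist by $\chi_{-\beta}$ exactly compensates for the action of $\hat{G}$, should reduce this matching to a direct comparison of the coefficients $c_i$ appearing in $\S$\ref{linearisation schmitt} on the ambient scheme and on each factor.
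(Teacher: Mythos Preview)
Your proposal is correct and follows essentially the same route as the paper: invoke \cite{hoskinskirwan} Proposition 3.6 to identify the categorical quotient with the $\Sb$-GIT quotient of $\mathfrak{T}_{(\tau)}$ for $\cL^{\chi_{-\beta}}$, use Proposition \ref{descr of Fss} to pass to the $G'$-quotient for $\cL$, and then apply Theorem \ref{schmitt theorem} factor by factor, with Lemma \ref{lemma X}, Remark \ref{rmk on sigma0} and Assumption \ref{ass on epsilon} giving the two numbered assertions. The paper's proof is somewhat terser and does not explicitly flag the linearisation-matching bookkeeping you raise as an obstacle; your observation that the character cancellation in Proposition \ref{descr of Fss} already handles this is the right way to close that gap.
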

\begin{proof}
It follows from \cite{hoskinskirwan} Proposition 3.6, that the categorical quotient is equal to the GIT quotient of $\Sb$ acting on $\mathfrak{T}_{(\tau)}$ with respect to the twisted linearisation $\cL^{\chi_{-\beta}}$. It follows from Proposition \ref{descr of Fss} this is the same as the GIT quotient of \[G'= \prod_{i=m_1}^{m_2} \prod_{j=1}^{s_i} \SL(V^i_{i,j}) \times \prod_{i=m_1}^{m_2-1} \prod_{j=1}^{t_i} (\GL(W^i_{i,j}) \times \GL(W^{i+1}_{i,j}) )\cap \SL(W^i_{i,j} \oplus W^{i+1}_{i,j})\] acting on $\mathfrak{T}_{(\tau)}$ with respect to $\cL$. By Theorem \ref{schmitt theorem}, this is the product of moduli spaces of $(\underline{1}, \delta \ue/\epsilon)$-semistable complexes with invariants given by $\tau$. The final statement follows from Lemma \ref{lemma X}, Remark \ref{rmk on sigma0} and the assumption on $\epsilon$ (cf. Assumption \ref{ass on epsilon}).
\end{proof}

In general this categorical quotient has lower dimension than expected and so is not a suitable quotient of the $G$-action on $R_\tau$. Instead, we suggest the quotient should be taken with respect to a perturbation of the linearisation used to provide the categorical quotient. However, as discussed in \cite{hoskinskirwan}, finding a way to perturb this linearisation and get an ample linearisation is not always possible. As $R_\tau = GY_{(\tau)}^{ss} \cong G \times^{P_\beta} Y_{(\tau)}^{ss}$ where $Y_{(\tau)}^{ss} :=p_\beta^{-1}(\mathfrak{T}_{(\tau)}^{ss})$, a categorical quotient of $G$ acting on $R_\tau$ is equivalent to a categorical quotient of $P_\beta$ acting on $Y_{(\tau)}^{ss}$. If we instead consider $P_\beta$ acting on $Y_{(\tau)}^{ss}$, then there are perturbed linearisations which are ample although $P_\beta$ is not reductive. A possible future direction is to follow the ideas of \cite{hoskinskirwan} and take a quotient of the reductive part $\Sb$ of $P_\beta$ acting on $Y_{(\tau)}^{ss}$ with respect to an ample perturbed linearisation and get a moduli space for complexes of Harder--Narasimhan type with $\tau$ some additional data.

\bibliographystyle{amsplain}
\bibliography{references}

\providecommand{\bysame}{\leavevmode\hbox to3em{\hrulefill}\thinspace}
\providecommand{\MR}{\relax\ifhmode\unskip\space\fi MR }
\providecommand{\MRhref}[2]{%
  \href{http://www.ams.org/mathscinet-getitem?mr=#1}{#2}
}
\providecommand{\href}[2]{#2}
\begin{thebibliography}{10}

\bibitem{ac}
L.~\'Alvarez-C\'onsul, \emph{Some results on the moduli spaces of quiver
  bundles}, Geometriae Dedicata \textbf{139} (2009), 99--120.

\bibitem{acgp}
L.~\'Alvarez-C\'onsul and O.~Garc\'ia-Prada, \emph{Hitchin--{K}obayashi
  correspondence, quivers, and vortices}, Comm. in Math. Phys. \textbf{238}
  (2003), 1--33.

\bibitem{gieseker_sheaves}
D.~Gieseker, \emph{On the moduli of vector bundles on an algebraic surface},
  Ann. of Math. \textbf{106} (1977), no.~1, 45 --60.

\bibitem{gothen_king}
P.~B. Gothen and A.~D. King, \emph{Homological algebra of twisted quiver
  bundles}, J. London Math. Soc. \textbf{71} (2005), 85--99.

\bibitem{harder}
G.~Harder and M.~S. Narasimhan, \emph{On the cohomology groups of moduli spaces
  of vector bundles on curves}, Math. Annal. \textbf{212} (1975), 215--248.

\bibitem{hesselink}
W.~H. Hesselink, \emph{Uniform instability in reductive groups}, J. Reine
  Angew. Math. \textbf{304} (1978), 74--96.

\bibitem{hoskinskirwan}
V.~Hoskins and F.~Kirwan, \emph{Quotients of unstable subvarieties and moduli
  spaces of sheaves of fixed {H}arder--{N}arasimhan type}, (2011) arxiv:
  1103.4731.

\bibitem{huybrechts}
D.~Huybrechts and M.~Lehn, \emph{The geometry of moduli spaces of sheaves},
  Aspects of Mathematics, Vieweg, 1997.

\bibitem{kempf_ness}
G.~Kempf and L.~Ness, \emph{The length of vectors in representation spaces},
  Algebraic Geometry, Lecture Notes in Mathematics, vol. 732, Springer Berlin /
  Heidelberg, 1979, pp.~233--243.

\bibitem{kempf}
G.~R. Kempf, \emph{Instability in invariant theory}, Ann. of Math. \textbf{108}
  (1978), no.~2, pp. 299--316 (English).

\bibitem{kirwan}
F.~Kirwan, \emph{Cohomology of quotients in symplectic and algebraic geometry},
  Mathematical Notes, no.~31, Princeton University Press, 1984.

\bibitem{schmitt05}
A.~H.~W. Schmitt, \emph{Moduli for decorated tuples of sheaves and
  representation spaces for quivers}, Proc. Indian Acad. Sci. (Math. Sci.)
  \textbf{115} (2005), 15--49.

\bibitem{simpson}
C.~T. Simpson, \emph{Moduli of representations of the fundamental group of a
  smooth projective variety}, Inst. Hautes Etudes Sci. Publ. Math. \textbf{79}
  (1994), 47--129.

\end{thebibliography}

\end{document}